\definecolor{rouge}{rgb}{0.7,0.00,0.00}
\definecolor{vert}{rgb}{0.00,0.5,0.00}
\definecolor{bleu}{rgb}{0.00,0.00,0.8}
\newtheorem{theorem}{Theorem}[section]
\newtheorem*{theorem*}{Theorem}
\newtheorem{lemma}[theorem]{Lemma}
\newtheorem{corollary}[theorem]{Corollary}
\newtheorem{proposition}[theorem]{Proposition}
\newtheorem{condition}{Condition}
\newtheorem{conditionA}{A\kern-0.1mm}
\renewcommand\dots{\hbox to 1em{.\hss.\hss.}}
\theoremstyle{definition}
\numberwithin{equation}{section}
\def\bb#1{\mathbb{#1}}
\def\bbm#1{\mathbbm{#1}}
\def\geq{\geqslant}
\def\leq{\leqslant}
\newcommand\ee{\varepsilon}
\DeclareMathOperator{\supp}{supp}
\DeclarePairedDelimiter\floor{\lfloor}{\rfloor}
\begin{document}

\title[Moderate deviations for products of random matrices]
{Berry-Esseen bounds and moderate deviations 
 for the norm, entries and spectral radius of  products of positive random matrices}

\author{Hui Xiao}
\author{Ion Grama}
\author{Quansheng Liu}

%\curraddr[Xiao, H., Grama, I., Liu, Q.]{ Universit\'{e} de Bretagne-Sud, LMBA UMR CNRS 6205, Vannes, France.}
\curraddr[Xiao, H.]{ Universit\'{e} de Bretagne-Sud, LMBA UMR CNRS 6205, Vannes, France}
\email{hui.xiao@univ-ubs.fr}
\curraddr[Grama, I.]{ Universit\'{e} de Bretagne-Sud, LMBA UMR CNRS 6205, Vannes, France}
\email{ion.grama@univ-ubs.fr}
\curraddr[Liu, Q.]{ Universit\'{e} de Bretagne-Sud, LMBA UMR CNRS 6205, Vannes, France}
\email{quansheng.liu@univ-ubs.fr}

%%%%%%%%%%%%%%%%%%%%%%%%%%%%%%%%%%%%%%%%%%%%%%%%%%%%%%%%

\begin{abstract}
Let $(g_{n})_{n\geq 1}$ be a sequence of independent and identically distributed 
positive random $d\times d$ matrices and consider the matrix product  $G_n = g_n \ldots g_1$.
%Denote by $\| G_n \|$ any matrix norm of $G_n$,  by $G_n^{i,j}$ its $(i,j)$-th entry, and by $\rho(G_n)$ its spectral radius. 
Under suitable conditions, we establish  % for the matrix product  $G_n = g_n \ldots g_1$, 
the Berry-Esseen bounds on the rate of convergence in the central limit theorem and  % precise 
moderate deviation expansions of Cram\'er type,  
for the matrix norm $\| G_n \|$ of $G_n$, for its $(i,j)$-th entry $G_n^{i,j}$ %  of $G_n$, and the 
and for its spectral radius $\rho(G_n)$. 
% As corollaries, moderate deviation principles are derived.
\end{abstract}

\date{\today}
\subjclass[2010]{Primary 60F05, 60F10, 60B20; Secondary 60J05, 60B15.}
\keywords{Berry-Esseen bound; Cram\'{e}r type moderate deviation;
Products of random matrices; Operator norm; Entries; Spectral Radius}

\maketitle

%%%%%%%%%%%%%%%%%%%%%%%%%%%%%%%%%%%%%%%%%%%%%%%%%%%%%%%%%%%%%%%%%%%%%%%%%%%%%
%%%%%%%%%%%%%%%%%%%%%%%%%%%%%%%%%%%%%%%%%%%%%%%%%%%%%%%%%%%%%%%%%%%%%%%%%%%%%
\section{Introduction}
Fix an integer $d \geq 2$. 
Let $(g_{n})_{n\geq 1}$ be a sequence of independent and identically distributed (i.i.d.)  
positive random $d\times d$ matrices of the same probability law $\mu$.
Set $G_n = g_n \ldots g_1$ and denote by $\| G_n \|$ any matrix norm of the product $G_n$.   
%Equip $\mathbb{R}^d$ with the standard scalar product $\langle \cdot, \cdot \rangle.$
%and denote by $(e_k)_{1\leq k \leq d}$ is canonical orthonormal basis in $\mathbb{R}^d$.
It has been of great interest in recent years to investigate the asymptotic behaviors 
of the random matrix product $G_n$ since the pioneering work of Furstenberg and Kesten \cite{FK60}. 
In \cite{FK60}  the strong law of large numbers (SLLN) for the matrix norm $\| G_n \|$ was established:  
if $\mathbb{E} ( \max \{ 0, \log \| g_1 \| \} ) < \infty$, then 
\begin{align} \label{Intro_LLN}
\lim_{ n \to \infty } \frac{1}{n} \log \| G_n \| = \lambda, \quad  a.s.,
\end{align}
where $\lambda$ is  a constant called the upper Lyapunov exponent of the product $G_n$.
This result can be seen as a direct consequence of Kingman's subaddtive ergodic theorem \cite{Kin73}.  
The central limit theorem (CLT) for $\| G_n \|$ was also proved in \cite{FK60}: 
for any $y \in \mathbb{R}$, 
\begin{align}\label{Intro-CLT}
\lim_{n \to \infty} \bb{P} \left(
 \frac{ \log \| G_n \| - n \lambda }{ \sigma \sqrt{n} } \leq y  \right)
= \frac{1}{\sqrt{2\pi}} \int_{-\infty}^y e^{-\frac{t^2}{2}} dt =: \Phi(y),
\end{align}
where $\sigma^2 > 0$ is the asymptotic variance corresponding to the product $G_n$. 
The conditions used in  \cite{FK60} for the proof of  \eqref{Intro-CLT} 
have been relaxed later by Hennion \cite{Hen97}  to the second moment condition 
together with the allowability and positivity condition that we will present in the next section. 
%The result \eqref{Intro-CLT} gives an affirmative answer to Bellman's conjecture in \cite{Bel54}.
%{\color{red}
%For further developments of the results \eqref{Intro_LLN} and \eqref{Intro-CLT}
%under some conditions weaker than that in \cite{FK60}, 
%we refer to Kingman \cite{Kin73}, Cohn, Nerman and Peligrad \cite{CNP93}, Hennion \cite{Hen97}
%and references therein. 
%}
We mention that in the case of invertible random matrices, the CLT \eqref{Intro-CLT}
was established by Le Page \cite{LeP82}, and has been extended by Goldsheid and Guivarc'h \cite{GG96} 
to a multidimensional version, and by Benoist and Quint \cite{BQ16a} 
to the general framework of reductive groups under optimal moment conditions.

%A very natural question is how accurate is the normal approximation \eqref{Intro-CLT}.
%We will answer this question by considering the absoute error and the relative error in the approximation  \eqref{Intro-CLT}. 
%It is well known that there are two ways to investigate the approximation error in \eqref{Intro-CLT}. 
In \cite{XGL19b} the authors proved a Berry-Esseen bound and a moderate deviation expansion
for the norm cocycle $\log | G_n x |$ jointly with the Markov chain $X_n^x= G_n x /| G_n x |$, 
where $x$ is any starting point on the unit sphere and $| \cdot |$ 
is the euclidean norm in $\bb R^d$. 
For related results  about the vector norm  $ | G_n x |$  we refer to  
\cite{LeP82, BL85, Aou11, Gui15, BM16, BQ16b, CDJ17, CDM17, Ser19, XGL19a}. %, XGL19b}. 
However, this type of results 
for other important quantities like the matrix norm  $\|G_n\|$, the entries $G_n^{i,j}$  and the 
spectral radius $\rho(G_n)$ of $G_n$ are  % notably 
absent in the literature.
The goal of the present  paper is to fill this gap by extending the results of \cite{XGL19b}  
to the matrix norm, to the entries and to the 
spectral radius  for the product $G_n$ of positive random matrices, 
jointly with the Markov chain $(X_n^x)_{n \geq 0}$.

Let us explain briefly the main results that we obtain for the matrix norm.  
We would like to quantify the error in the normal approximation \eqref{Intro-CLT}.
We do this in two ways.  
%There are usually two ways to do this. 
% the approximation error in \eqref{Intro-CLT}
%We will answer this question by considering the absoute error and the relative error. 
%It is well known that there are two ways to investigate the approximation error in \eqref{Intro-CLT}. 
The first way is to estimate the absolute error. 
In this spirit, under suitable conditions we prove the following Berry-Esseen bound: 
there exists a constant $C>0$ such that for all $n \geq 1$, 
% on the absolute error in \eqref{Intro-CLT}: 
%More specifically, assume that there exists a constant $C > 1$ such that for any $g \in \supp \mu$,
%then, under some moments condition, we have for any $1 \leq i,j \leq d$, 
%%and the condition which is weaker than \eqref{Intro-Condi-Kesten} (see condition \ref{Condi-KestenH}),
%%we have
\begin{align}\label{Intro-BEThm}
\sup_{y \in \mathbb{R}} 
\left| \bb{P} \left(
 \frac{\log  \| G_n \| - n \lambda }{ \sigma \sqrt{n}} \leq y \right)
 - \Phi(y) \right| \leq \frac{C}{\sqrt{n}}.
\end{align}
Our result \eqref{Intro-BEThm} is clearly a refinement of \eqref{Intro-CLT} by giving the rate of convergence.
%in the central limit theorem. 
In fact, a more general version of the Berry-Esseen bound for the couple $(X_n^x, \log \|G_n\|)$
with a target function $\varphi$ on $X_n^x$ is given in Theorem \ref{Thm-BerryE-Norm}.  %Note that the condition used in  

%Clearly the condition \eqref{Intro-Condi-KesW} is weaker than \eqref{Intro-Condi-Kesten}, 

The second way is to study the relative error in \eqref{Intro-CLT}. Along this line we prove the following
%The second objective is to establish 
Cram\'{e}r type moderate deviation expansion: as $n \to \infty$, 
 uniformly in $y \in [0, o(\sqrt{n} )]$, 
\begin{align}\label{Intro-Cram-Posi}
\frac{\mathbb{P}
  \left( \frac{\log \| G_n \| - n \lambda }{ \sigma \sqrt{n} } \geq y  \right)
    } {1-\Phi(y)}
= e^{ \frac{y^3}{\sqrt{n}}\zeta ( \frac{y}{\sqrt{n}} ) }
  \left[ 1 + O \left( \frac{ y+1 }{ \sqrt{n} } \right) \right],
\end{align}
where $\zeta$ is the Cram\'{e}r series (see \eqref{Def-CramSeri}).
Note that the expansion \eqref{Intro-Cram-Posi} clearly implies the moderate deviation principle 
for the matrix norm $\| G_n \|$, see Corollary \ref{Ch5_coro-mdp-norm}, 
which to the best of our knowledge was not known before.

%It is worth mentioning that from \eqref{Intro-Cram-Posi}
%we can deduce the following  local limit theorem with moderate deviations: 
%for any $-\infty < a_1 < a_2 < \infty$ 
%and any positive sequence $(y_n)_{n\geq 1}$ satisfying $\sqrt{n} y_n \to \infty$,
%we have, uniformly in $y \in [y_n, o(\sqrt{n}) ]$,
%\begin{align} \label{Intro_LLTEnt}
% \lim_{n \to \infty}  \sigma  \sqrt{2 \pi n}  
%   e^{ \frac{y^2}{2} - \frac{y^3}{\sqrt{n}} \zeta( \frac{y}{\sqrt{n}} ) }
%\mathbb{P}
%\big( \log \|G_n\| \! - \! n \lambda \! \in \!  [a_1, a_2] \! + \! \sqrt{n} \sigma y  \big)
%\!=\!  a_2 \! - \! a_1. 
%\end{align}
%%where $\zeta$ is the Cram\'{e}r series. 
%Actually,  a more general version of \eqref{Intro_LLTEnt} is proved in Theorem \ref{Thm-Local-Norm}.

%The third objective is to obtain analogous results of \eqref{Intro-BEThm} and \eqref{Intro-Cram-Posi}
%for the entries of products of invertible matrices.
%We refer to Guivarc'h and Raugi \cite{GR85}, Bougerol and Lacroix \cite{BL85}
%and Benoist and Quint \cite{BQ16b} for some limit theorems for the entries of products of invertible matrices.

The results \eqref{Intro-BEThm} and \eqref{Intro-Cram-Posi} concern the matrix norm $\|G_n\|$,
but we also prove that they remain valid (under stronger conditions) when the matrix norm $\|G_n\|$
is replaced by the entries $G_n^{i,j}$ or the spectral radius $\rho(G_n)$: 
see Theorems \ref{Thm-BerryE-Posi} and \ref{Thm-Cram-Posi-tag}. % and \ref{Thm-Local-Posi}. 
The corresponding strong law of large numbers and the central limit theorem were established in \cite{FK60, CNP93, Hen97}
for the entries $G_n^{i,j}$, and in \cite{Hen97} for the spectral radius $\rho(G_n)$. 
However,  our Theorems \ref{Thm-BerryE-Posi} and \ref{Thm-Cram-Posi-tag} on Berry-Esseen bounds and   
Cram\'{e}r type moderate deviation expansions
for the entries $G_n^{i,j}$ and the spectral radius $\rho(G_n)$ are new.

The proofs of  \eqref{Intro-BEThm} and \eqref{Intro-Cram-Posi}
are based on the recent results established  in \cite{XGL19b}  about  
the Berry-Esseen bound and the Cram\'{e}r type moderate deviation expansion for the norm cocyle $\log |G_n x|$
and on a comparison between $\| G_n \|$ and $|G_n x|$ (Lemma \ref{Lem_Norm}),
where $x$ is a vector in $\mathbb{R}^d$ with strictly positive components. 
%and $| \cdot |$ is the standard Euclidean norm.

To prove \eqref{Intro-BEThm} and \eqref{Intro-Cram-Posi}
when the matrix norm $\| G_n \|$ is replaced by the entries $G_n^{i,j}$, 
in addition to the use  of the aforementioned results established  in \cite{XGL19b}, we do 
 a careful quantitative analysis of the comparison between 
$\log G_n^{i,j}:= \log \langle e_i, G_n e_j \rangle$ and $\log |G_n e_j|$,
where $(e_i)_{ 1 \leq k \leq d}$ is the canonical orthonormal basis in $\mathbb{R}^d$. 
This comparison is possible due to a regularity condition which ensures that all the entries 
in the same column of the matrix $g \in \supp \mu$ (the support of $\mu$) are comparable: 
see condition \ref{Condi-KestenH}.
Note that this condition is  % nevertheless 
weaker than the Furstenberg-Kesten condition \eqref{Intro-Condi-Kesten} used in \cite{FK60},
which says that all the entries of the matrix $g \in \supp \mu$ are comparable. 

%To prove \eqref{Intro-BEThm}, \eqref{Intro-Cram-Posi} and \eqref{Intro_LLTEnt} 
%when the entries $G_n^{i,j}$ is replaced by the matrix norm $\| G_n \|$,  
%we also use  the aforementioned results stablished  in \cite{XGL19b} 
%and a comparison between $\| G_n \|$ and $|G_n x|$: see Lemma \ref{Lem_Norm}. 

%This allows us to establish the Berry-Esseen bound and 
%the Cram\'{e}r type moderate deviation expansion for the matrix norm $\log \| G_n \|$
%under some condition weaker than \ref{Condi-KestenH}. 

Using the results mentioned above for the matrix norm $\| G_n \|$ and for the vector norm 
$|G_n x|$ established in \cite{XGL19b}, 
we then prove the corresponding results for the spectral radius $\rho(G_n)$
based on the Collatz-Wielandt formula: see Theorems \ref{Thm-BerryE-Posi} and \ref{Thm-Cram-Posi-tag}. 
%and \ref{Thm-Local-Posi}. 

When the boundedness condition \ref{Condi-KestenH} of Furstenberg-Kesten  type 
is relaxed to a moment condition  \ref{Condi-WeakKest}, 
 % on the harmonic moments of the entries, 
we are also able to establish Berry-Esseen type bounds and moderate deviation principles 
for the entries $G_n^{i,j}$ and the spectral radius $\rho(G_n)$: 
see Theorems \ref{Thm-BerryE-Posi_Weak} and \ref{ciro-LDP002}. 
Note that under condition \ref{Condi-WeakKest}, 
the Markov chain $(X_n^x)_{n \geq 0}$ is no longer separated from the coordinates $e_i$
and an important step to prove Theorems \ref{Thm-BerryE-Posi_Weak} and \ref{ciro-LDP002}
is to establish the H\"{o}lder regularity of the stationary measure $\nu$ shown in Proposition \ref{PropRegularity},
which is also of independent interest. 
The proof of Proposition \ref{PropRegularity} is based on the large deviation bounds 
for the norm cocycle $\log |G_n x|$ stated in Theorem \ref{Thm_BRLD_changedMea}.  

% In closing this section, we mention that  large deviations for random matrices 

In closing this section, we mention that Berry-Esseen bounds and moderate  deviations 
for random matrices on different aspects  have been considered in the literature,  
% from different point of view, 
see e.g. Chen, Gao and Wang \cite{CGW15}  for eigenvalues of a single random matrix when the dimension goes to $\infty$.

%%%%%%%%%%%%%%%%%%%%%%%%%%%%%%%%%%%%%%%%%%%%%%%%%%%%%%%%%%%%%%%%%%%%%%%%%%%%%%%%
%%%%%%%%%%%%%%%%%%%%%%%%%%%%%%%%%%%%%%%%%%%%%%%%%%%%%%%%%%%%%%%%%%%%%%%%%%%%%%%%
\section{Main results}

\subsection{Notation and conditions}

For any integer $d\geq 2$,
denote by $\mathcal M_+$ the multiplicative semigroup of $d\times d$ matrices 
with non-negative entries in $\mathbb{R}$.
A non-negative matrix $g \in \mathcal M_+$ is said to be \emph{allowable},
if every row and every column of $g$ contains a strictly positive entry.
%Any allowable matrix $g$ will be simply called positive matrix.
We write $\mathcal M_+^\circ $ for the subsemigroup of $\mathcal M_+$ with strictly positive entries.
%denote by $M(d, \mathbb{R})$ the set of $d\times d$ matrices with entries in $\mathbb{R}$.
%We shall work with products of positive matrices or invertible matrices.
Equip the space $\mathbb{R}^{d}$ with the standard scalar product $\langle \cdot, \cdot \rangle$
and the Euclidean norm $|\cdot|$. For a vector $x$, we write $x\geq 0$ (resp. $x>0$) if all its components are non-negative (resp. strictly positive). 
%Set $\mathbb{P}^{d-1} = \{x\in \mathbb{R}^{d}, |x|=1\} / \pm$,
Denote by $\mathbb{S}^{d-1}_{+} = \{x \geq 0 : |x|=1 \}$ the intersection of the unit sphere
with the positive quadrant.
%For positive matrices (every entry is non-negative), 
The space $\mathbb{S}^{d-1}_{+}$ is endowed with the Hilbert cross-ratio metric $\mathbf{d}$, 
i.e., for any $x=(x_1, \ldots, x_d)$ and $y=(y_1, \ldots, y_d)$ in  $\mathbb{S}_{+}^{d-1}$,
\begin{align*}
\mathbf{d}(x,y) = \frac{ 1- m(x,y)m(y,x) }{ 1 + m(x,y)m(y,x) }, 
\end{align*} 
where 
\begin{align*}
m(x,y) = \sup \left\{ \alpha > 0:  \alpha y_i\leq x_i,\  \forall i=1,\ldots, d  \right\}. 
\end{align*}
%where $m(x,y)=\sup\{ \alpha > 0:  \alpha y_i\leq x_i,\  \forall i=1,\ldots, d \}.$
It is shown in \cite{Hen97} that
there exists a constant $C>0$ such that 
$|x-y|\leq C \mathbf{d}(x,y)$ for any $x,y \in \mathbb{S}_{+}^{d-1}$. 
We refer to  \cite{Hen97} for more properties of the metric $\mathbf{d}$.

Let $\mathcal{C}(\mathbb{S}^{d-1}_{+})$ be the space of continuous complex-valued functions 
on $\mathbb{S}^{d-1}_{+}$ and $\mathbf{1}$ be the constant function  
with value $1$.
%We write $\mathbf{1}$ for the identity function on $\bb S_+^{d-1}$. %$x \mapsto \mathbf{1}(x)$, $x \in \bb S_+^{d-1}$.
Throughout the paper we always assume that $\gamma>0$ is a fixed small enough constant.  
For any $\varphi\in \mathcal{C}(\mathbb{S}^{d-1}_{+})$, set
\begin{align*}
&\|\varphi\|_{\gamma}:= \|\varphi\|_{\infty} + [\varphi]_{\gamma}, \ \   
\|\varphi\|_{\infty}:=  \!\!  \sup_{x \in \mathbb{S}^{d-1}_{+} }|\varphi(x)|,  \ \
[\varphi]_{\gamma}: =  \!\!
  \sup_{x,y\in \mathbb{S}^{d-1}_{+} }\frac{|\varphi(x)-\varphi(y)|}{\mathbf{d}^{\gamma }(x,y)}. 
\end{align*}
We introduce the Banach space
\begin{align*}
\mathcal{B}_{\gamma}:= \left\{ \varphi\in \mathcal{C}(\mathbb{S}^{d-1}_{+}): \| \varphi \|_{\gamma}< + \infty \right\}.
\end{align*}
%$\mathcal{B}_{\gamma}:= \{ \varphi\in \mathcal{C}(\mathbb{S}^{d-1}_{+}): \| \varphi \|_{\gamma}< + \infty\}.$

Let $(g_{n})_{n\geq 1}$ be a sequence of i.i.d. positive random matrices 
of the same probability law $\mu$ on $\mathcal M_+$.
Denote by $\supp \mu$ the support of the measure $\mu$. 
Consider the matrix product $G_n = g_n \ldots g_1$ %for $n\geq 1.$
and denote by $G_n^{i,j}$ the $(i,j)$-th entry of $G_n$, where $1 \leq i, j \leq d$. 
It holds that 
\begin{align*}
G_n^{i,j} = \langle e_i, G_n e_j \rangle, 
\end{align*}
%$G_n^{i,j} = \langle e_i, G_n e_j \rangle$, 
where $(e_k)_{1 \leq k \leq d}$ is the canonical orthonormal basis of $\mathbb{R}^d$. 
For any $g \in \mathcal M_+$, denote by $\rho(g)$ the spectral radius of $g$,
and by $\| g \|$ its operator norm, i.e., $\| g \| = \sup_{ x\in \mathbb{S}_+^{d-1} }|g x|$.
By Gelfand's formula, it holds that $\rho(g) = \lim_{k \to \infty} \| g^k \|^{1/k}$. 
%It is easy to see that $\rho(g)$ does not depend on the given matrix norm $\| \cdot \|$. 
%Moreover, the spectral radius $\rho(g)$ coincides with the largest absolute value of the eigenvalues of $g$. 
In this paper, we are interested in Berry-Esseen bounds and moderate deviation
asymptotics for the matrix norm $\| G_n \|$, the entries $G_n^{i,j}$ and the spectral radius $\rho(G_n)$. 

Let $\iota(g) = \inf_{x\in \mathbb{S}_+^{d-1} }|gx|$ and $N(g) = \max \{ \|g\|, \iota(g)^{-1} \}$. 
We shall need the following exponential moment condition:

\begin{conditionA}\label{Condi-MomentH}
There exists a constant $\eta \in (0,1)$ such that $\mathbb{E} [ N(g_1)^{\eta} ] < +\infty$.
\end{conditionA}

%\begin{condition}[A1]\label{Condi-MomentH}
%There exists a constant $\eta \in (0,1)$ such that $\mathbb{E} [ N(g_1)^{\eta} ] < +\infty$.
%\end{condition}

Let $\Gamma_{\mu}$ be the smallest closed subsemigroup of $\mathcal M_+$ generated by $\supp \mu$. 
We will use the allowability and positivity conditions:
\begin{conditionA}\label{CondiAPH}
{\rm (i) (Allowability) }
Every $g\in\Gamma_{\mu}$ is allowable.

{\rm (ii) (Positivity) }
$\Gamma_{\mu}$ contains at least one matrix belonging to $\mathcal M_+^\circ$.
\end{conditionA}

It follows from the Perron-Frobenius theorem that every  $g \in \mathcal M_+^\circ$
has a dominant eigenvalue which coincides with its spectral radius $\rho(g)$. 
The corresponding eigenvector is denoted by $v_g$. 
It is easy to see that $v_g \in \mathbb{S}_+^{d-1}$.

% the intersection of the unit sphere
%with the positive quadrant,
%and set $\|g\|=\sup_{x\in \mathbb{S}_+^{d-1} }|g x|$
%%for the corresponding operator norm of
%for $g \in \mathbb{G}$.

The following condition ensures that all the entries 
in each column of the matrix $g \in \supp \mu$ are comparable. 
%which was introduced by Furstenberg and Kesten \cite{FK60}:
\begin{conditionA}\label{Condi-KestenH} %(Support of $\mu$)
For any $1\leq j \leq d$, 
there exists a constant $C > 1$ such that for any $g = (g^{i,j})_{1\leq i, j \leq d} \in \supp \mu$, 
\begin{align*}  %\label{Intro-Condi-KesW}
1 \leq \frac{\max_{1\leq i \leq d} g^{i,j} }{ \min_{1\leq i \leq d} g^{i,j} }  \leq C. 
%\quad \mbox{for all} \ 1 \leq j \leq d. 
\end{align*} 
%for any $g \in \supp \mu$,
%%for any $g=(g^{i,j})_{1\leq i, j \leq d}\in \supp \mu$,
%%\textcolor{blue}{we have $g^{ij}>0$ and}
%\begin{align*}
%1\leq \max_{1\leq j \leq d}
%\frac{\max_{1\leq i \leq d} \langle e_i, g e_j \rangle }{ \min_{1\leq i \leq d} \langle e_i, g e_j \rangle }
%\leq C.
%%\quad \mbox{for all} \ 1 \leq j \leq d.
%\end{align*}
\end{conditionA}

Note that the set of such type of matrices forms a subsemigroup of $\mathcal M_+$, 
because if two positive matrices $g_1$ and $g_2$ satisfy condition \ref{Condi-KestenH},
then so does the product $g_2 g_1$, as will be seen from Lemma \ref{lem equiv Kesten}
where an equivalent description of condition \ref{Condi-KestenH} will be provided.

It is easy to see that condition \ref{Condi-KestenH} implies condition \ref{CondiAPH}.  
However, our condition \ref{Condi-KestenH} is clearly weaker than the 
Furstenberg-Kesten condition used in \cite{FK60}: there exists a constant $C > 1$ such that
for any $g = (g^{i,j})_{1\leq i, j \leq d} \in \supp \mu$,
\begin{align}\label{Intro-Condi-Kesten}
1 \leq \frac{\max_{1\leq i, j\leq d}  g^{i,j} }{ \min_{1\leq i,j\leq d}  g^{i,j} } \leq C.   
\end{align}
%Note that the assumption \eqref{Intro-Condi-Kesten} 
This condition plays an essential role in \cite{FK60} for the proofs of 
the strong law of large numbers and the central limit theorem for entries $G_n^{i,j}$.

The following condition concerns the existence 
of the harmonic moments of the entries  of $g_1$: 
\begin{conditionA}\label{Condi-WeakKest}
For any $1 \leq i, j \leq d$, there exists a constant $\delta > 0$ such that 
\begin{align*}
\mathbb{E} \Big[ \big( g_1^{i,j} \big)^{-\delta} \Big]  < \infty.
\end{align*}
\end{conditionA}
%One can easily verify that condition \ref{Condi-WeakKest} implies condition \ref{CondiAPH}. 

Condition \ref{Condi-WeakKest} is used to establish Berry-Esseen type bounds
and moderate deviation principles for the entries $G_n^{i,j}$ and the spectral radius $\rho(G_n)$,
see Theorems \ref{Thm-BerryE-Posi_Weak} and \ref{ciro-LDP002},
where condition \ref{Condi-KestenH} is not assumed. 
Note that the conditions \ref{Condi-KestenH} and \ref{Condi-WeakKest} do not imply each other.
However, under the moment assumption \ref{Condi-MomentH},
condition \ref{Condi-KestenH} (and therefore also \eqref{Intro-Condi-Kesten}) implies condition \ref{Condi-WeakKest}.
 The converse is not true.

For any $x \in \mathbb{S}_+^{d-1}$ and allowable matrix $g \in \mathcal M_+$,
we write $g \cdot x: = \frac{gx}{ |gx| }$ for the projective action of the matrix $g$ 
on the projective space $\mathbb{S}_+^{d-1}$.
%With this notation,
 For any starting point $x \in \mathbb{S}_+^{d-1}$, set $X_0^x = x$ and 
\begin{align*}
X_n^x = G_n \cdot x, \quad n \geq 1. 
\end{align*}
Then $(X_n^x)_{n \geq 0}$ forms a Markov chain on $\mathbb{S}_+^{d-1}$ with the transfer operator $P$ given as follows: 
for any $\varphi \in \mathcal{C}(\mathbb{S}^{d-1}_{+})$, 
\begin{align}\label{Ch5_Def_P_qwe}
P \varphi (x) = \int_{ \Gamma_{\mu} } \varphi(g \cdot x) \mu(dg), \quad  x \in \mathbb{S}^{d-1}_{+}. 
\end{align}
Under conditions \ref{Condi-MomentH} and \ref{CondiAPH}, 
the Markov chain $(X_n^x)_{n \geq 0}$ possesses a unique stationary probability measure 
$\nu$ on $\mathbb{S}_+^{d-1}$ such that for any $\varphi \in \mathcal{C}(\mathbb{S}^{d-1}_{+})$,
\begin{align*} %\label{mu station meas}
\int_{ \mathbb{S}_+^{d-1} } \int_{ \Gamma_{\mu} } \varphi(g \cdot x) \mu(dg) \nu(dx) 
= \int_{ \mathbb{S}_+^{d-1} } \varphi(x) \nu(dx).
 \end{align*}
Moreover, the support of $\nu$ is given by
$\supp \nu = \overline{\{v_{g}\in \mathbb S^{d-1}_+: g\in \Gamma_{\mu} \cap \mathcal M_+^\circ \}}$. 
We refer to \cite{Kes73, Hen97, BDGM14, XGL19b} for more details. 
%In addition, for both cases, $\supp \nu$ is indeed the unique minimal $\Gamma_{\mu}$-invariant subset:   
%see \cite{GL16} and \cite{BDGM14} for the proof. 

Under conditions \ref{Condi-MomentH} and \ref{CondiAPH}, 
it is shown in \cite{XGL19b} that uniformly in $x \in \mathbb{S}_+^{d-1}$, 
%the asymptotic variance
\begin{align}\label{Formu_sig}
\sigma^2 : = \lim_{n\to\infty} \frac{1}{n} \mathbb{E} \left[ (\log |G_n x| - n \lambda )^{2}  \right] 
\in [0, \infty),
\end{align}
where $\lambda$ is the upper Lyapunov exponent defined by \eqref{Intro_LLN}. 
%exists in $[0,\infty)$ (see ????).
%Moreover, the asymptotic variance $\sigma^2$ is independent of the starting point $x \in \mathbb{S}_+^{d-1}$. 
Equivalent formulations of $\sigma^2$ will be given in Proposition \ref{Prop_Variance}. 
%To establish the Berry-Esseen bounds and the precise moderate deviation expansions
%for the entries $G_n^{i,j}$, the matrix norm $\| G_n \|$ and the spectral radius $\rho(G_n)$,
We shall need the following conditions. 
\begin{conditionA}\label{Condi-VarianceH}
The asymptotic variance $\sigma^2$ satisfies $\sigma^2 > 0$.
\end{conditionA}

%Condition \ref{Condi-VarianceH} holds if 
%the additive subgroup of $\mathbb{R}$ generated by the set
%$\{ \log \rho(g) : g\in \Gamma_{\mu} \cap \mathcal M_+^\circ \}$
%is dense in $\mathbb{R}$, see \cite{Kes73,BM16,XGL19b} for details.
%
%
%We need the following non-arithmeticity condition for positive matrices:

\begin{conditionA}\label{Condi-NonArith}
{\rm (Non-arithmeticity)}
For $t>0$, $\theta \in [0, 2\pi)$ and a function $\varphi: \bb S_+^{d-1} \to \mathbb{R}$, the equation 
\begin{align*}
|gx|^{it} \varphi (g \cdot x) = e^{i\theta} \varphi(x),  \quad \forall g \in \Gamma_{\mu},  \forall  x\in \supp \nu,
\end{align*}
has no trivial solution except that $t = 0$, $\theta =0$ and $\varphi$ is a constant.
\end{conditionA}

Note that condition \ref{Condi-NonArith} implies \ref{Condi-VarianceH}. 
If the additive subgroup of $\mathbb{R}$ generated by the set
$\{ \log \rho(g) : g\in \Gamma_{\mu} \cap \mathcal M_+^\circ \}$
is dense in $\mathbb{R}$, then both conditions \ref{Condi-VarianceH} and \ref{Condi-NonArith} are fulfilled (see \cite{BM16}).
This sufficient condition was introduced by Kesten \cite{Kes73} and is usually easier to verify in practice.

\subsection{Berry-Esseen bounds}
The goal of this section is to present our results on the Berry-Esseen bounds for the matrix norm $\| G_n \|$, 
the entries $G_n^{i,j}$ and the spectral radius $\rho(G_n)$. 
Let us first state the result for the operator norm $\| G_n \|$. 
%Let us first formulate the Berry-Esseen bound for the entries $G_n^{i,j}$ and more generally for the scalar product
%$\langle f, G_nx \rangle$, where $f, x \in \mathbb{S}_+^{d-1}$.
Denote $(\mathbb{S}_+^{d-1})^{\circ} = \{x > 0 : |x|=1 \}$, 
which is the interior of the projective space $\mathbb{S}_+^{d-1}$. 

\begin{theorem}\label{Thm-BerryE-Norm}
Assume conditions \ref{Condi-MomentH}, \ref{CondiAPH} and \ref{Condi-VarianceH}.  
Then, for any compact set $K \subset (\mathbb{S}_+^{d-1})^{\circ}$, 
there exists a constant $C>0$ such that for all $n \geq 1$ and $\varphi \in \mathcal{B}_{\gamma}$, 
\begin{align} \label{BerryE-Posi02}
\sup_{y \in \mathbb{R}} \sup_{ x \in K } 
\left| \mathbb{E} \left[ \varphi(X_n^x)
\mathbbm{1}_{ \big\{ \frac{\log \| G_n \| - n \lambda }{\sigma \sqrt{n}} \leq y \big\} }
   \right]
 - \nu(\varphi) \Phi(y) \right| \leq  \frac{C}{\sqrt{n}}  \| \varphi \|_{\gamma}.
\end{align}
\end{theorem}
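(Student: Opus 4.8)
The plan is to reduce the Berry--Esseen bound for $\log\|G_n\|$ to the corresponding bound for the norm cocycle $\log|G_n x|$, which is available from \cite{XGL19b}. The starting observation is Lemma \ref{Lem_Norm} (the comparison between $\|G_n\|$ and $|G_n x|$): for $x$ in a compact set $K\subset(\mathbb{S}_+^{d-1})^\circ$ there is a constant $C=C(K)$ such that
\begin{align*}
|G_n x| \leq \|G_n\| \leq C\, |G_n x|
\end{align*}
for all $n$ and all realizations, uniformly in $x\in K$ (the positivity of the components of $x$ together with \ref{CondiAPH} is what makes $\|G_n\|$ and $|G_n x|$ comparable with a \emph{deterministic} constant). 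Taking logarithms, $0\leq \log\|G_n\| - \log|G_n x| \leq \log C$, so after centering by $n\lambda$ and scaling by $\sigma\sqrt n$ the two normalized quantities differ by at most $\log C/(\sigma\sqrt n)$, which is $O(1/\sqrt n)$.

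Next I would write, for fixed $y$ and $x\in K$,
\begin{align*}
\mathbb{E}\Big[\varphi(X_n^x)\mathbbm{1}_{\{\frac{\log|G_n x|-n\lambda}{\sigma\sqrt n}\leq y - \frac{\log C}{\sigma\sqrt n}\}}\Big]
\leq \mathbb{E}\Big[\varphi(X_n^x)\mathbbm{1}_{\{\frac{\log\|G_n\|-n\lambda}{\sigma\sqrt n}\leq y\}}\Big]
\leq \mathbb{E}\Big[\varphi(X_n^x)\mathbbm{1}_{\{\frac{\log|G_n x|-n\lambda}{\sigma\sqrt n}\leq y\}}\Big]
\end{align*}
for $\varphi\geq 0$ (the general case follows by splitting $\varphi$ into positive and negative parts, at the cost of replacing $\|\varphi\|_\gamma$ by a constant multiple of it). Now apply the Berry--Esseen bound of \cite{XGL19b} for the couple $(X_n^x,\log|G_n x|)$ to both the upper and lower bounds: each is within $C\|\varphi\|_\gamma/\sqrt n$ of $\nu(\varphi)\Phi(y)$, respectively $\nu(\varphi)\Phi(y-\log C/(\sigma\sqrt n))$. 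Finally, since $\Phi$ is Lipschitz with $\|\Phi'\|_\infty=1/\sqrt{2\pi}$, we have $|\Phi(y)-\Phi(y-\log C/(\sigma\sqrt n))|\leq \log C/(\sigma\sqrt{2\pi n})$, and $|\nu(\varphi)|\leq\|\varphi\|_\infty\leq\|\varphi\|_\gamma$. Collecting terms gives the claimed bound with a constant depending only on $K$ (through $C(K)$), $\sigma$, and the constant from \cite{XGL19b}; the supremum over $y\in\mathbb{R}$ and $x\in K$ is then immediate since all estimates are uniform.

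The one point that needs care — and the main (mild) obstacle — is making sure that the Berry--Esseen result invoked from \cite{XGL19b} is genuinely uniform over $x$ in the compact set $K$ and holds with the target function $\varphi\in\mathcal{B}_\gamma$ appearing on the Markov chain $X_n^x$, rather than in the plain form without $\varphi$; one should cite the precise statement there and check that its hypotheses are exactly \ref{Condi-MomentH}, \ref{CondiAPH}, \ref{Condi-VarianceH}. Beyond that, the argument is a soft sandwiching: the deterministic two-sided comparison $|G_n x|\leq\|G_n\|\leq C|G_n x|$ transfers \emph{any} distributional approximation with rate $1/\sqrt n$ from $\log|G_n x|$ to $\log\|G_n\|$ without loss, because the additive discrepancy $\log C$ is a constant and hence negligible after the $\sqrt n$ scaling. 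No new probabilistic input is required.
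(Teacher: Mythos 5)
Your proposal is correct and follows essentially the same route as the paper: both proofs reduce the statement to the Berry--Esseen bound for the norm cocycle $\log|G_n x|$ of \cite{XGL19b} via the two-sided deterministic comparison $|G_n x|\leq\|G_n\|\leq C\,|G_n x|$ on $K$ (Lemma~\ref{Lem_Norm}), and then absorb the resulting $O(1/\sqrt n)$ shift in $y$ using the uniform Lipschitz property of $\Phi$. The only cosmetic difference is that you present the two bounds as a single sandwich of indicators whereas the paper handles the upper and lower bounds in separate passes; the substance is identical, and your caution about checking that the cited result holds uniformly in $x$ with a target function $\varphi\in\mathcal B_\gamma$ is exactly the form \eqref{Norm-BerryE-Posi} in which the paper invokes it.
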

Since all matrix norms are equivalent, it can be easily checked that in Theorem \ref{Thm-BerryE-Norm}, the operator norm $\|\cdot\|$ can be replaced by any matrix norm.

It would be interesting to show that \eqref{BerryE-Posi02}
holds uniformly in $x \in \mathbb{S}_+^{d-1}$ instead of $x \in K$. 
Note that Theorem \ref{Thm-BerryE-Norm} is proved under the exponential moment condition \ref{Condi-MomentH}.
%on the matrix law $\mu$. 
It is not clear how to establish Theorem \ref{Thm-BerryE-Norm} under 
the polynomial moment condition on the matrix law $\mu$. 

If the stronger condition \ref{Condi-KestenH} holds instead of condition \ref{CondiAPH},
then we are able to prove the following Berry-Esseen bounds for the scalar product $\langle f, G_n x \rangle$
and for the spectral radius $\rho(G_n)$.

\begin{theorem}\label{Thm-BerryE-Posi}
Assume conditions \ref{Condi-MomentH}, \ref{Condi-KestenH} and \ref{Condi-VarianceH}. 
\begin{itemize}
\item[\rm{(1)}] There exists a constant $C>0$ such that for all $n \geq 1$ and $\varphi \in \mathcal{B}_{\gamma}$, 
%\begin{align}\label{BerryE-Posi01}
%\sup_{y \in \mathbb{R}} \sup_{f, x \in \mathbb{S}_+^{d-1} }
%\Big| \mathbb{P} \Big(
% \frac{\log \langle f, G_n x \rangle - n\Lambda'(0)}{\sigma_{0}\sqrt{n}} \leq y \Big)
% - \Phi(y) \Big| \leq \frac{C}{\sqrt{n}}.
%\end{align}
%More generally,
%for any H\"{o}lder continuous function $\varphi$ on $\mathbb{S}_+^{d-1}$ satisfying $\nu(\varphi) \neq 0$,
%we have
\begin{align} 
\qquad \quad  \sup_{y \in \mathbb{R}} \sup_{f, x \in \mathbb{S}_+^{d-1} }
\left| \mathbb{E} \Big[ \varphi(X_n^x)
\mathbbm{1}_{ \big\{ \frac{\log \langle f, G_n x \rangle - n \lambda }{\sigma \sqrt{n}} \leq y \big\} }
   \Big]
 - \nu(\varphi) \Phi(y) \right| \leq   \frac{C}{\sqrt{n}} \| \varphi \|_{\gamma}.    \label{BerryE01}  
\end{align}
\item[\rm{(2)}] For any compact set $K \subset (\mathbb{S}_+^{d-1})^{\circ}$, 
there exists a constant $C>0$ such that for all $n \geq 1$ and $\varphi \in \mathcal{B}_{\gamma}$, 
\begin{align}
\qquad \sup_{y \in \mathbb{R}} \sup_{ x \in K }
 \left| \mathbb{E} \Big[ \varphi(X_n^x)
  \mathbbm{1}_{ \big\{ \frac{\log \rho(G_n) - n \lambda }{\sigma \sqrt{n}} \leq y \big\} }
   \Big]
   - \nu(\varphi) \Phi(y) \right| \leq  \frac{C}{\sqrt{n}}  \| \varphi \|_{\gamma}.  \label{BerryE02}
\end{align}
\end{itemize} 
\end{theorem}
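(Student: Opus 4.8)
Proof proposal for Theorem~\ref{Thm-BerryE-Posi}.

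\textbf{Strategy.} The plan is to derive both parts from the already-established Berry-Esseen bound in \cite{XGL19b} for the norm cocycle $\log|G_nx|$ jointly with the Markov chain $(X_n^x)_{n\ge 0}$, by controlling the discrepancy between $\log\langle f,G_n x\rangle$ (resp.\ $\log\rho(G_n)$) and $\log|G_nx|$. The point is that a Berry-Esseen bound of order $n^{-1/2}$ is insensitive to additive perturbations of the centered-and-normalized statistic that are $O(1/\sqrt n)$ in probability, or more precisely: if $|A_n - B_n|\le R_n$ where $R_n$ is bounded by a constant not growing with $n$, then since $\Phi$ is Lipschitz and the bound in \cite{XGL19b} already gives an $n^{-1/2}$ term, one loses only an extra $O(1/\sqrt n)$ when passing from $B_n$ to $A_n$ in a statement of the form \eqref{BerryE01}. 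So the whole game is to bound $|\log\langle f,G_nx\rangle - \log|G_nx||$ and $|\log\rho(G_n)-\log|G_n x||$ uniformly, using condition \ref{Condi-KestenH}.

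\textbf{Part (1): the scalar product $\langle f,G_nx\rangle$.} First I would note that for any $f,x\in\mathbb S_+^{d-1}$,
\begin{align*}
\langle f, G_n x\rangle \le |f|\,|G_n x| = |G_n x|,
\end{align*}
so one direction is free. For the reverse inequality I would use condition \ref{Condi-KestenH} (equivalently, via Lemma~\ref{lem equiv Kesten}, the fact that it is stable under products, so $G_n$ itself has all entries in each column comparable with a uniform constant $C$). Writing $y = G_n\cdot x = G_nx/|G_nx| \in \mathbb S_+^{d-1}$, the vector $y$ has, in each coordinate, $y^i = (G_nx)^i/|G_nx| = \sum_j G_n^{i,j}x^j/|G_nx|$; since the entries $G_n^{i,j}$ for fixed $j$ differ by at most a factor $C$, all coordinates $y^i$ are comparable up to the constant $C$, hence $y$ lies in a fixed compact subset of $(\mathbb S_+^{d-1})^\circ$ bounded away from the boundary, uniformly in $n$, $x$ and the realization. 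Therefore $\langle f,y\rangle \ge c > 0$ for a constant $c$ depending only on $C$ and $d$, uniformly in $f\in\mathbb S_+^{d-1}$, which gives $\langle f,G_nx\rangle \ge c\,|G_nx|$. Combining, $|\log\langle f,G_nx\rangle - \log|G_nx|| \le |\log c| =: C'$ for all $n$, $f$, $x$ and all realizations. Plugging this into the Berry-Esseen bound of \cite{XGL19b} for $(\log|G_nx|, X_n^x)$ and using $\sup_y|\Phi(y)-\Phi(y-t)|\le |t|/\sqrt{2\pi}$ together with a standard cutting argument on the event where the perturbation matters, one gets \eqref{BerryE01} with an enlarged constant. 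The uniformity over $f$ and $x$ is automatic since $C'$ and the \cite{XGL19b} bound are uniform.

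\textbf{Part (2): the spectral radius $\rho(G_n)$.} Here the plan is to invoke the Collatz-Wielandt characterization of the Perron eigenvalue of the allowable (indeed, under \ref{Condi-KestenH}, column-regular) matrix $G_n$:
\begin{align*}
\rho(G_n) = \sup_{z>0}\ \min_{1\le i\le d}\ \frac{(G_n z)^i}{z^i}
= \inf_{z>0}\ \max_{1\le i\le d}\ \frac{(G_n z)^i}{z^i}.
\end{align*}
From the left-hand equality with the test vector $z = x$ (a fixed $x\in K$, which has all coordinates bounded below), and using column-regularity of $G_n$ exactly as in Part~(1) to see that $\min_i (G_n x)^i \ge c\, |G_n x|$ and $\max_i (G_nx)^i \le |G_n x|$, one gets $\rho(G_n)\ge c'\,|G_nx|$; from the right-hand equality with $z=x$ one gets $\rho(G_n)\le c''\,|G_nx|$, again with constants depending only on $C$, $d$ and the compact set $K$ (through a lower bound on the coordinates of points of $K$). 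Hence $|\log\rho(G_n) - \log|G_nx||\le C''$ uniformly in $n$, $x\in K$ and the realization, and the conclusion \eqref{BerryE02} follows from the \cite{XGL19b} bound as before; the restriction $x\in K\subset(\mathbb S_+^{d-1})^\circ$ is needed precisely to keep the Collatz-Wielandt test vector $x$ away from the boundary so that the constants are uniform.

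\textbf{Main obstacle.} The substantive point — and the place requiring care — is the uniform two-sided comparison between $\log\rho(G_n)$, $\log\langle f,G_nx\rangle$ and $\log|G_nx|$: one must check that condition \ref{Condi-KestenH}, via its product-stability (Lemma~\ref{lem equiv Kesten}), yields bounds on these ratios that are \emph{independent of $n$ and of the random realization}, and that the Collatz-Wielandt inf/sup can be controlled with a single fixed test vector rather than optimizing. Everything else is the routine observation that an $n^{-1/2}$ Berry-Esseen bound is stable under an $O(1)$ (hence a fortiori $o(\sqrt n)$-in-probability) additive shift of the statistic.
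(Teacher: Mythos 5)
Your proposal is essentially correct and uses the same core mechanism as the paper: compare $\log\langle f,G_nx\rangle$ (resp.\ $\log\rho(G_n)$) to $\log|G_nx|$ up to a uniform additive constant via condition \ref{Condi-KestenH} through Lemma~\ref{lem equiv Kesten} and the Collatz--Wielandt formula, then feed this $O(1)$-stable comparison into the Berry--Esseen bound \eqref{Norm-BerryE-Posi} for the norm cocycle from \cite{XGL19b}. Two minor variants are worth noting. In part~(1), you justify $\langle f,G_nx\rangle\geq c\,|G_nx|$ by arguing that the full product $G_n$ inherits the column-comparability of \ref{Condi-KestenH}; the paper short-circuits this by writing $\langle f,G_nx\rangle=\langle f,X_n^x\rangle\,|G_nx|$ and noting that $X_n^x=g_n\cdot X_{n-1}^x$ with $g_n\in\supp\mu$, so $\langle f,X_n^x\rangle\geq\epsilon$ follows at once from Lemma~\ref{lem equiv Kesten} (no semigroup step needed). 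In part~(2), for the bound $\rho(G_n)\lesssim |G_nx|$ you use the dual (inf--max) Collatz--Wielandt inequality with the fixed test vector $x\in K$; the paper instead uses $\rho(G_n)\leq\|G_n\|$ together with Theorem~\ref{Thm-BerryE-Norm}, which in turn rests on $\|G_n\|\leq C\,|G_nx|$ for $x\in K$ (Lemma~\ref{Lem_Norm}). Your route is a valid and somewhat more self-contained alternative, but the two are effectively equivalent, since both reduce to a uniform two-sided comparison of $\rho(G_n)$ with $|G_nx|$ over $x\in K$.
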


In particular, taking $\varphi = \mathbf{1}$, $f = e_i$ and $x = e_j$ in \eqref{BerryE01},
we get the Berry-Esseen bound for the entries $G_n^{i,j}$.
The Berry-Esseen bounds \eqref{BerryE01} and \eqref{BerryE02} are new. 
%It would be  interesting to establish these bounds % \eqref{Intro-BEThm}, \eqref{BerryE01} and \eqref{BerryE02}
%under some condition weaker than  \ref{Condi-KestenH}. 
%%\ref{Condi-MomentH}, \ref{CondiAPH} and \ref{Condi-VarianceH}. 

If condition \ref{Condi-KestenH} is replaced by the weaker one \ref{Condi-WeakKest}
(under \ref{Condi-MomentH}, condition \ref{Condi-WeakKest} is weaker than \ref{Condi-KestenH}),
then we are able to establish the following result. 

\begin{theorem}\label{Thm-BerryE-Posi_Weak}
Assume conditions \ref{Condi-MomentH}, \ref{Condi-WeakKest} and \ref{Condi-NonArith}. 
\begin{itemize}
\item[\rm{(1)}] There exists a constant $C>0$ such that for all $n \geq 1$ and $\varphi \in \mathcal{B}_{\gamma}$, 
\begin{align} 
\qquad \quad  \sup_{y \in \mathbb{R}} \sup_{f, x \in \mathbb{S}_+^{d-1} }
\left| \mathbb{E} \Big[ \varphi(X_n^x)
\mathbbm{1}_{ \big\{ \frac{\log \langle f, G_n x \rangle - n \lambda }{\sigma \sqrt{n}} \leq y \big\} }
   \Big]
 - \nu(\varphi) \Phi(y) \right| \leq   \frac{C \log n}{\sqrt{n}} \| \varphi \|_{\gamma}.     \label{BerryE01_Weak}  
\end{align}
\item[\rm{(2)}] For any compact set $K \subset (\mathbb{S}_+^{d-1})^{\circ}$, 
there exists a constant $C>0$ such that for all $n \geq 1$ and $\varphi \in \mathcal{B}_{\gamma}$, 
\begin{align}
\qquad \sup_{y \in \mathbb{R}} \sup_{ x \in K }
 \left| \mathbb{E} \Big[ \varphi(X_n^x)
  \mathbbm{1}_{ \big\{ \frac{\log \rho(G_n) - n \lambda }{\sigma \sqrt{n}} \leq y \big\} }
   \Big]
   - \nu(\varphi) \Phi(y) \right| \leq  \frac{C \log n}{\sqrt{n}}  \| \varphi \|_{\gamma}.  \label{BerryE02_Weak}
\end{align}
\end{itemize} 
\end{theorem}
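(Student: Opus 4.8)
The plan is to run the same strategy as for Theorem \ref{Thm-BerryE-Posi}, namely to sandwich $\log\langle f,G_nx\rangle$ (resp.\ $\log\rho(G_n)$) between quantities of the form $\log|G_mx'|$ for which the Berry--Esseen bound of \cite{XGL19b} applies, but now the comparison is no longer uniform in $x$ because condition \ref{Condi-KestenH} is dropped. First I would fix a small integer $k$ and split $G_n = G_{n}^{(k)}g_k\cdots g_1$, or rather look at the last $k$ factors, so that with positive probability the partial product $g_k\cdots g_1$ lands in $\mathcal M_+^\circ$; the key point is that once $G_m$ has strictly positive entries, $G_m\cdot x$ lies in a compact subset of $(\mathbb S_+^{d-1})^\circ$ and the arguments of part (2) of Theorem \ref{Thm-BerryE-Posi} go through. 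The passage from the interior to an arbitrary starting point $x\in\mathbb S_+^{d-1}$, and to an arbitrary $f\in\mathbb S_+^{d-1}$, is controlled by the H\"older regularity of the stationary measure $\nu$ (Proposition \ref{PropRegularity}): the set of $x$ for which $G_k\cdot x$ is ``bad'' (too close to the boundary, or the cocycle $\log|G_kx|$ too large) has $\nu$-measure decaying polynomially in the relevant scale, and Proposition \ref{PropRegularity} quantifies this.

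Concretely, write the event $\{\log\langle f,G_nx\rangle - n\lambda \le \sigma\sqrt n\,y\}$ and condition on $\mathcal F_k=\sigma(g_1,\dots,g_k)$. On the event $A_k$ that $|\log|G_kx||\le L_n$ and $\mathbf d(X_k^x,\supp\nu)$ is not too small --- say $X_k^x$ stays in a compact $K_n\subset(\mathbb S_+^{d-1})^\circ$ --- one has $\langle f,G_nx\rangle = \langle f, G_{n}^{(k)}(G_kx)\rangle$ with $G_n^{(k)}=g_n\cdots g_{k+1}$ independent of $\mathcal F_k$ and $G_kx$ deep in the interior, so the comparison lemmas between $\log\langle f,G_n^{(k)}y\rangle$, $\log|G_n^{(k)}y|$ and $\log\|G_n^{(k)}\|$ (Lemma \ref{Lem_Norm} and the column-regularity analysis) together with the Berry--Esseen bound for the norm cocycle from \cite{XGL19b} give the estimate with error $O(n^{-1/2})$ plus a shift of size $O(L_n/\sqrt n)$. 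Choosing $L_n \asymp \log n$ makes this shift $O(\log n/\sqrt n)$, and on the complement $A_k^c$ one uses Proposition \ref{PropRegularity} and the exponential moment bound \ref{Condi-MomentH} together with the harmonic moment bound \ref{Condi-WeakKest} to show $\mathbb P(A_k^c)$, integrated against $\varphi(X_n^x)$, is also $O(\log n/\sqrt n)$ after optimizing $k=k_n$ (one lets $k_n$ grow slowly, e.g. $k_n\asymp \log n$, so that the probability that $G_{k_n}$ fails to be positive is polynomially small). Replacing the indicator of a half-line by a smoothed version and using the local-limit-type control of the density of $\Phi$ absorbs the discretization error. For part (2), the Collatz--Wielandt formula $\rho(G_n)=\inf_{x>0}\max_i (G_nx)_i/x_i$ (or the eigenvector identity $\rho(G_n)=|G_n v_{G_n}|$ up to bounded factors) reduces the spectral radius to the scalar-product statement of part (1) evaluated along the Perron eigenvector, exactly as in the proof of \eqref{BerryE02}; the restriction to a compact $K\subset(\mathbb S_+^{d-1})^\circ$ for the starting point is what is needed to keep $v_{G_n}$ and the initial direction comparable.

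The non-arithmeticity condition \ref{Condi-NonArith} (which is strictly stronger than \ref{Condi-VarianceH}) is used precisely at the point where we invoke the Berry--Esseen bound for $\log|G_n^{(k)}y|$ from \cite{XGL19b}: that result requires non-arithmeticity, and here, unlike in Theorems \ref{Thm-BerryE-Norm}--\ref{Thm-BerryE-Posi}, we also need the full strength of Proposition \ref{PropRegularity}, whose proof in turn rests on the large deviation bounds of Theorem \ref{Thm_BRLD_changedMea}. The main obstacle I anticipate is the two-sided nature of the sandwich combined with the non-uniformity in $x$ and $f$: the lower-comparison $\log\langle f,G_nx\rangle \ge \log|G_nx| - (\text{correction})$ is only useful when the correction is controlled, and near the boundary of $\mathbb S_+^{d-1}$ this correction can be as large as $|\log\iota(g)|$-type terms; balancing the truncation level $L_n\asymp\log n$, the number of discarded initial factors $k_n$, and the exponent $\delta$ from \ref{Condi-WeakKest} so that \emph{both} the truncated main term and the tail contribution are $O(\log n/\sqrt n)$ --- rather than, say, $O((\log n)^2/\sqrt n)$ --- is the delicate bookkeeping at the heart of the argument, and is exactly the reason the bound in \eqref{BerryE01_Weak}--\eqref{BerryE02_Weak} carries the extra $\log n$ factor compared with \eqref{BerryE01}--\eqref{BerryE02}.
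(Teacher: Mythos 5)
The global shape of your proposal is right — split off $O(\log n)$ factors, use the regularity of the stationary measure to control the bad set, invoke the norm-cocycle Berry--Esseen bound of \cite{XGL19b}, and pay a $\log n$ in the shift --- and you correctly identify Proposition~\ref{PropRegularity} and the non-arithmeticity condition as the new ingredients. But the concrete decomposition you propose does not work, for two reasons.

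First, the claim that ``once $G_m$ has strictly positive entries, $G_m\cdot x$ lies in a compact subset of $(\mathbb S_+^{d-1})^\circ$'' is false. Under condition~\ref{Condi-WeakKest} the matrices $g_i$ are already a.s.\ in $\mathcal M_+^\circ$, so $G_m\in\mathcal M_+^\circ$ a.s.\ for every $m\ge 1$; if strict positivity forced $G_m\cdot x$ into a fixed compact interior set the theorem would be immediate and condition~\ref{Condi-KestenH} pointless. Strict positivity gives $X_m^x\in(\mathbb S_+^{d-1})^\circ$ pointwise, with no quantitative lower bound on the smallest coordinate; quantifying that lower bound is precisely what Proposition~\ref{PropRegularity} and estimate~\eqref{Pf_RegPosi001} are for.

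Second, and more fundamentally, conditioning on $\mathcal F_k=\sigma(g_1,\dots,g_k)$ and asking $X_k^x$ to land in a compact $K_n\subset(\mathbb S_+^{d-1})^\circ$ does not give you the comparison you need. You have $\log\langle f,G_nx\rangle=\log|G_nx|+\log\langle f,X_n^x\rangle$, and the correction term depends on the \emph{terminal} direction $X_n^x$, not on $X_k^x$. Having $X_k^x$ deep in the interior says nothing about how close $G_n^{(k)}\cdot X_k^x$ is to the orthogonal complement of $f$. Lemma~\ref{Lem_Norm} compares $|gy|$ with $\|g\|$ for $y$ in a compact interior set; it does not compare $\langle f,gy\rangle$ with $|gy|$, and the ``column-regularity analysis'' you invoke is exactly Lemma~\ref{lem equiv Kesten}, which is unavailable since condition~\ref{Condi-KestenH} is dropped. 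So on your event $A_k$ you would still face the original problem, just for $G_n^{(k)}$ in place of $G_n$, and the argument is circular. The paper does the opposite split: it conditions on the \emph{first} $n-k$ factors and applies~\eqref{Pf_RegPosi001} (with $s=0$, established inside the proof of Proposition~\ref{PropRegularity}) to the \emph{last} $k$ factors, which gives, via~\eqref{Regu_n_k_01}, that $\mathbb P\bigl(\langle f,X_n^x\rangle\le e^{-C_1k}\bigr)\le e^{-C_2k}$ uniformly in $f,x$; taking $k\asymp\log n$ then replaces $\log\langle f,G_nx\rangle$ by $\log|G_nx|$ up to an additive $C_1k$ shift, and one applies~\eqref{Norm-BerryE-Posi} with $y_1=y+C_1k/(\sigma\sqrt n)$. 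Your truncation $|\log|G_kx||\le L_n$ and the smoothing step are unnecessary once this is in place. Your ``or rather look at the last $k$ factors'' shows you were close, but the rest of the proposal reverts to the first-$k$ split, which is the gap.
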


The proof of \eqref{BerryE01_Weak} and \eqref{BerryE02_Weak} relies on
the H\"{o}lder regularity of the stationary measure $\nu$ 
established in Proposition \ref{PropRegularity}. 
To prove Proposition \ref{PropRegularity}, the large deviation bounds 
for the norm cocycle $\log |G_n x|$ (see Theorem \ref{Thm_BRLD_changedMea}) is required.
This explains why the non-arithmeticity condition \ref{Condi-NonArith} 
is assumed in Theorem \ref{Thm-BerryE-Posi_Weak}.

It seems to be a challenging problem to improve \eqref{BerryE01_Weak} and \eqref{BerryE02_Weak}
by replacing $\frac{\log n}{\sqrt{n}}$ with $\frac{1}{\sqrt{n}}$. 

%%%%%%%%%%%%%%%%%%%%%%%%%%%%%%%%%%%%%%%%%%%%%%%%%%%%%%%%%%%%%%%%%%%%%%%%%
%%%%%%%%%%%%%%%%%%%%%%%%%%%%%%%%%%%%%%%%%%%%%%%%%%%%%%%%%%%%%%%%%%%%%%%%%
\subsection{Moderate deviation expansions}
In this section we formulate the moderate deviation results  
for the matrix norm $\| G_n \|$, the entries $G_n^{i,j}$ and the spectral radius $\rho(G_n)$. 
We need some additional notation. For any $s \in (-\eta, \eta)$,
define the transfer operator $P_s$ as follows:  for any $\varphi \in \mathcal{C}(\mathbb{S}_+^{d-1})$, 
%$P_s \varphi (x) = \mathbb{E} [ e^{ s \log |g_1 x| } \varphi (g_1 \!\cdot\! x) ]$,  
%where $\varphi \in \mathcal{C}(\mathbb{S}_+^{d-1})$ and $x \in \mathbb{S}_+^{d-1}$. 
\begin{align}\label{Def_Trans_Oper_01}
P_s \varphi (x) = \int_{\Gamma_{\mu}} |g x|^s \varphi (g \cdot x) \mu(dg), 
\quad  x \in \mathbb{S}_+^{d-1}. 
\end{align}
We see that $P_0$ coincides with the transfer operator $P$ defined by \eqref{Ch5_Def_P_qwe}.  
Based on the perturbation theory for linear operators \cite{HH01}, 
it was shown in \cite{XGL19b} that under conditions \ref{Condi-MomentH} and \ref{CondiAPH}, 
the transfer operator $P_s$ has spectral gap properties on the Banach space $\mathcal{B}_{\gamma}$ 
and possesses a dominating eigenvalue $\kappa(s)$. 
Moreover, the function $\kappa$ is analytic, real-valued and strictly convex in a small neighborhood of $0$ 
under the additional condition \ref{Condi-VarianceH}. 
Denote $\Lambda = \log \kappa$ and $\gamma_k = \Lambda^{(k)}(0)$, $k \geq 1$, 
then it holds that $\gamma_1 =\lambda$ and $\gamma_2 = \sigma^2$. 
Throughout this paper, we write $\zeta$ for the Cram\'{e}r series of $\Lambda$: 
\begin{align}\label{Def-CramSeri}
\zeta(t)=\frac{\gamma_3}{6\gamma_2^{3/2} } + \frac{\gamma_4\gamma_2-3\gamma_3^2}{24\gamma_2^3}t
+ \frac{\gamma_5\gamma_2^2-10\gamma_4\gamma_3\gamma_2 + 15\gamma_3^3}{120\gamma_2^{9/2}}t^2 + \cdots,  
\end{align}
which converges for  $|t|$ small enough. %We refer to \cite{XGL19b} for more details. 

The following result concerns the Cram\'{e}r type moderate deviations for the operator norm $\| G_n \|$. 
Recall that $(\mathbb{S}_+^{d-1})^{\circ} = \{x > 0 : |x|=1 \}$. 

\begin{theorem}\label{Thm-Cram-Norm}
Assume conditions \ref{Condi-MomentH}, \ref{CondiAPH} and \ref{Condi-VarianceH}.
Then, for any compact set $K \subset (\mathbb{S}_+^{d-1})^{\circ}$, 
we have, as $n \to \infty$, uniformly in $x \in K$, $y \in [0, o(\sqrt{n} )]$ and $\varphi \in \mathcal{B}_{\gamma}$, 
\begin{align} %\label{CramerThm01}
&   \frac{\mathbb{E}
  \left[ \varphi(X_n^x) \mathbbm{1}_{ \{ \log \| G_n \| - n \lambda \geq \sqrt{n} \sigma y \} } \right] }
  { 1-\Phi(y) }
   = e^{ \frac{y^3}{\sqrt{n}}\zeta (\frac{y}{\sqrt{n}} ) }
    \left[ \nu(\varphi) +  \|\varphi \|_{\gamma}  O \left( \frac{y+1}{\sqrt{n}} \right) \right],  \label{CramerNorm01} \\
&   \frac{\mathbb{E}
  \left[ \varphi(X_n^x) \mathbbm{1}_{ \{ \log \| G_n \| - n \lambda \leq - \sqrt{n} \sigma y \} } \right] }
  { \Phi(-y) }
    =  e^{ - \frac{y^3}{\sqrt{n}}\zeta (-\frac{y}{\sqrt{n}} ) }
   \left[ \nu(\varphi) +  \|\varphi \|_{\gamma} O \left( \frac{ y+1 }{ \sqrt{n} } \right) \right].  \label{CramerNorm02}
\end{align}
\end{theorem}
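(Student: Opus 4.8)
The plan is to deduce the two expansions \eqref{CramerNorm01}--\eqref{CramerNorm02} from the corresponding moderate deviation expansion for the norm cocycle $\log|G_n x|$ (jointly with $X_n^x$ and a target function $\varphi$) established in \cite{XGL19b}, by means of an elementary two‑sided comparison between $\|G_n\|$ and $|G_n x|$ for starting points $x$ with strictly positive coordinates. Fix a compact set $K\subset(\mathbb{S}_+^{d-1})^{\circ}$ and put $\delta=\delta(K):=\min_{x\in K}\min_{1\le i\le d}x_i>0$. Since every $G_n$ has non‑negative entries and since $y\le\delta^{-1}x$ coordinatewise for every $y\in\mathbb{S}_+^{d-1}$ and $x\in K$, one gets $|G_n y|\le\delta^{-1}|G_n x|$; taking the supremum over $y\in\mathbb{S}_+^{d-1}$ yields $|G_n x|\le\|G_n\|\le\delta^{-1}|G_n x|$ for all $n\ge1$ and all $x\in K$ (this is Lemma \ref{Lem_Norm}). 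Hence $\log\|G_n\|=\log|G_n x|+R_n$ with a random $R_n$ satisfying $0\le R_n\le c_0:=\log(1/\delta)$, uniformly in $n$ and $x\in K$.

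From this, for $y\ge0$ one has the inclusions of events
\begin{align*}
\{\log|G_n x|-n\lambda\ge\sqrt{n}\,\sigma y\}\ \subseteq\ \{\log\|G_n\|-n\lambda\ge\sqrt{n}\,\sigma y\}\ \subseteq\ \{\log|G_n x|-n\lambda\ge\sqrt{n}\,\sigma y-c_0\},
\end{align*}
and symmetrically for the lower tail, the left‑hand and right‑hand events being those of $\log|G_n x|$ at levels $y$ and $y+c_0/(\sqrt{n}\,\sigma)$ respectively. Writing the indicator of the middle event as that of the smaller one plus an error supported on the thin slab between the two extreme ones, I can sandwich $\mathbb{E}[\varphi(X_n^x)\mathbbm{1}_{\{\log\|G_n\|-n\lambda\ge\sqrt{n}\sigma y\}}]$ between the two expressions $\mathbb{E}[\varphi(X_n^x)\mathbbm{1}_{\{\log|G_n x|-n\lambda\ge\sqrt{n}\sigma y'\}}]$ with $y'=y$ and $y'=y-c_0/(\sqrt{n}\sigma)$, the remainder being bounded by $\|\varphi\|_\infty$ times the probability of the slab, which is itself the difference of these two expressions with $\varphi=\mathbf{1}$. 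Invoking the expansion of \cite{XGL19b} for each choice of level then reduces the proof of \eqref{CramerNorm01} to a single elementary fact: replacing $y$ by $y'=y\pm c_0/(\sqrt{n}\sigma)$ in $(1-\Phi(y))\exp\!\big(\tfrac{y^3}{\sqrt n}\zeta(\tfrac{y}{\sqrt n})\big)$ changes it by a factor $1+O\!\big(\tfrac{y+1}{\sqrt n}\big)$, uniformly for $y\in[0,o(\sqrt n)]$.

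That fact is checked directly. From the classical bounds $c\,e^{-y^2/2}/(1+y)\le 1-\Phi(y)\le C\,e^{-y^2/2}/(1+y)$ for $y\ge0$ one gets $1-\Phi(y')=(1-\Phi(y))\big(1+O(\tfrac{y+1}{\sqrt n})\big)$ (and $\Phi(-y')=\Phi(-y)\big(1+O(\tfrac{y+1}{\sqrt n})\big)$) whenever $|y-y'|\le c_0/(\sqrt{n}\sigma)$. Setting $f(u)=u^3\zeta(u)$, which is analytic near $0$ with $f(u)=O(u^3)$ and hence $f'(u)=O(u^2)$, we have $\tfrac{y^3}{\sqrt n}\zeta(\tfrac{y}{\sqrt n})=n\,f(\tfrac{y}{\sqrt n})$, so the difference of exponents is $n\big[f(\tfrac{y'}{\sqrt n})-f(\tfrac{y}{\sqrt n})\big]=\sqrt n\,f'(\xi)\,(y'-y)=O\!\big((\tfrac{y}{\sqrt n})^2\big)=O\!\big(\tfrac{y+1}{\sqrt n}\big)$ in the moderate range (where $y\le\sqrt n$ eventually). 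Feeding these back into the sandwich — and noting the slab probability is then $O\!\big((1-\Phi(y))\exp(\tfrac{y^3}{\sqrt n}\zeta(\tfrac{y}{\sqrt n}))\tfrac{y+1}{\sqrt n}\big)$, which is absorbed into the $\|\varphi\|_\gamma\,O(\tfrac{y+1}{\sqrt n})$ remainder — gives \eqref{CramerNorm01}; the lower‑tail case \eqref{CramerNorm02} is identical, using the lower‑tail expansion of \cite{XGL19b} and the inclusions with $y+c_0/(\sqrt n\sigma)$.

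The only genuinely matrix‑theoretic ingredient is the deterministic comparison $|G_n x|\le\|G_n\|\le\delta^{-1}|G_n x|$, where positivity of the $g_n$ is used and where the constant degenerates as $x$ approaches the boundary of $\mathbb{S}_+^{d-1}$ — this is exactly why the statement is restricted to compact $K\subset(\mathbb{S}_+^{d-1})^{\circ}$. I expect the main (though still routine) obstacle to be the bookkeeping in the last paragraph: verifying that an additive $O(1)$ perturbation of the normalized variable produces only a multiplicative $1+O(\tfrac{y+1}{\sqrt n})$ distortion of both the Gaussian tail and the Cramér correction, uniformly throughout the whole moderate zone $y\in[0,o(\sqrt n)]$; the deep analytic content is already packaged in the expansion for $\log|G_n x|$ borrowed from \cite{XGL19b}.
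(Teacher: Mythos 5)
Your proposal follows essentially the same route as the paper's proof: the key inputs are Lemma~\ref{Lem_Norm} (the two‑sided comparison $|G_n x|\leq\|G_n\|\leq C(K)|G_n x|$ for $x\in K$) and the moderate deviation expansion for $\log|G_n x|$ from \cite{XGL19b} stated as Lemma~\ref{MainThmNormTarget}, after which the proof reduces to showing that an $O(1/\sqrt{n})$ shift of $y$ distorts $(1-\Phi(y))\exp\!\big(\tfrac{y^3}{\sqrt n}\zeta(\tfrac{y}{\sqrt n})\big)$ only by a factor $1+O(\tfrac{y+1}{\sqrt n})$. The paper organizes this as a separate lower bound (from $\log\|G_n\|\geq\log|G_n x|$) and upper bound (from $\log\|G_n\|\leq\log|G_n x|+C_1$ with level $y_1=y-\tfrac{C_1}{\sigma\sqrt{n}}$), whereas you phrase it as a sandwich with a slab remainder controlled by $\|\varphi\|_\infty$; these are the same estimate in different packaging, though your version dispenses with the reduction to nonnegative $\varphi$. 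Your direct derivation of the comparison constant $\delta^{-1}$ via coordinatewise domination is a nice self‑contained substitute for citing \cite{BDGM14}.

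There is one small gap you should close. Lemma~\ref{MainThmNormTarget} is stated only for $y\in[0,o(\sqrt{n})]$, but your shifted level $y'=y-c_0/(\sigma\sqrt{n})$ is negative when $y<c_0/(\sigma\sqrt{n})$, so invoking the expansion at $y'$ is not literally licensed there. The paper avoids this by treating $y\in[0,1]$ separately with the Berry‑Esseen bound \eqref{BerryE-Posi02} together with the spectral‑gap estimate \eqref{Ch5_SP_ddd}, and only running the level‑shift argument on $y\in(1,o(\sqrt n)]$, where $y_1>0$ for $n$ large. In your writeup you should either make this same split, or argue directly that for $y=O(1/\sqrt n)$ both sides of \eqref{CramerNorm01} are $1+O(1/\sqrt n)$ by the Berry‑Esseen bound, so the expansion is trivially satisfied in that regime. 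Everything else in your argument, including the uniform bounds on $1-\Phi(y')$ and on $n[f(y'/\sqrt{n})-f(y/\sqrt{n})]$ with $f(u)=u^3\zeta(u)$, is correct throughout the moderate range.
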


 Like in Theorem \ref{Thm-BerryE-Norm},  it can also be checked that in Theorem \ref{Thm-Cram-Norm} 
 the operator norm $\|\cdot\|$ can be replaced by any matrix norm.

Note that condition \ref{Condi-KestenH} is not required in Theorem \ref{Thm-Cram-Norm}. 
Theorem \ref{Thm-Cram-Norm} is new even for $\varphi = \mathbf{1}$ 
and the expansions \eqref{CramerNorm01} and \eqref{CramerNorm02} remain valid even when $\nu(\varphi) = 0$. 
As a particular case, 
Theorem \ref{Thm-Cram-Norm} implies the following moderate deviation principle for $\log \| G_n \|$
with a target function $\varphi$ on the Markov chain $X_n^x$.  
%where the operator norm $\|\cdot\|$ can be replaced by any matrix norm.  
\begin{corollary} \label{Ch5_coro-mdp-norm}
Assume conditions \ref{Condi-MomentH}, \ref{CondiAPH} and \ref{Condi-VarianceH}.
Then, for any real-valued function $\varphi \in \mathcal{B}_{\gamma}$ satisfying $\nu(\varphi) > 0$, 
for any Borel set $B \subseteq \mathbb{R}$ and any positive sequence $(b_n)_{n\geq 1}$ satisfying
$\frac{b_n}{n}\rightarrow0$ and $\frac{b_n}{\sqrt{n}}\to \infty$, 
we have, uniformly in $x \in K$, 
\begin{align} \label{Ch5_MDP_Norm}
- \inf_{y\in B^{\circ}} \frac{y^2}{2\sigma^2} & \leq   
\liminf_{n\to \infty} \frac{n}{b_n^{2}}
\log  \mathbb{E} \left[  \varphi(X_n^x)  
\mathbbm{1}_{ \big\{ \frac{\log \|G_n \| - n\lambda }{b_n} \in B  \big\} }  \right]
\nonumber\\
&  \leq   \limsup_{n\to \infty}\frac{n}{b_n^{2}}
\log  \mathbb{E}  \left[   \varphi(X_n^x) 
\mathbbm{1}_{ \big\{ \frac{\log \|G_n \| - n\lambda }{b_n} \in B  \big\} }   \right] 
\leq - \inf_{y\in \bar{B}} \frac{y^2}{2\sigma^2},
\end{align}
where $B^{\circ}$ and $\bar{B}$ are respectively the interior and the closure of $B$.
\end{corollary}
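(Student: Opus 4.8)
The plan is to derive the moderate deviation principle (MDP) in Corollary \ref{Ch5_coro-mdp-norm} from the Cram\'er type expansions \eqref{CramerNorm01} and \eqref{CramerNorm02} of Theorem \ref{Thm-Cram-Norm} by the standard route: first establish the two-sided bounds for half-lines of the form $[u, \infty)$ and $(-\infty, -u]$, then patch them together for general Borel sets via the usual interior/closure approximation. Throughout, write $b_n$ for the scaling sequence with $b_n/n \to 0$ and $b_n/\sqrt{n} \to \infty$, and set $y_n = y\sqrt{n}/(\sigma b_n)$ whenever we want to bring a threshold $\log\|G_n\| - n\lambda \geq b_n y$ into the form $\geq \sqrt{n}\sigma y_n$ appearing in \eqref{CramerNorm01}. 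The key arithmetic fact is that $b_n/\sqrt{n}\to\infty$ forces $y_n \to \infty$ (for fixed $y>0$), while $b_n/n\to 0$ forces $y_n/\sqrt{n} = y/(\sigma b_n/\sqrt{n})\cdot (1/\sqrt{n})\to 0$, so $y_n$ lies in the admissible range $[0, o(\sqrt{n})]$ for all $n$ large; moreover $y_n^3 \zeta(y_n/\sqrt{n})/\sqrt{n}\to 0$ since $\zeta$ is bounded near $0$ and $y_n^3/\sqrt{n} = y^3 n/(\sigma^3 b_n^3)\cdot(1/\sqrt{n}) = o(1)$ because $b_n^2/n\to\infty$.

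First I would record the one-sided estimate. Fix a real-valued $\varphi\in\mathcal B_\gamma$ with $\nu(\varphi)>0$ and fix $u>0$. Applying \eqref{CramerNorm01} with $y$ replaced by $y_n = u\sqrt{n}/(\sigma b_n)$ gives
\begin{align*}
\mathbb{E}\!\left[\varphi(X_n^x)\mathbbm{1}_{\{\log\|G_n\|-n\lambda \geq b_n u\}}\right]
= (1-\Phi(y_n))\, e^{\frac{y_n^3}{\sqrt{n}}\zeta(\frac{y_n}{\sqrt{n}})}\left[\nu(\varphi) + \|\varphi\|_\gamma O\!\left(\tfrac{y_n+1}{\sqrt{n}}\right)\right],
\end{align*}
uniformly in $x\in K$. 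Taking $\log$, multiplying by $n/b_n^2$, and using the classical tail asymptotic $\log(1-\Phi(y_n)) = -\tfrac{y_n^2}{2}(1+o(1)) = -\tfrac{u^2 n}{2\sigma^2 b_n^2}(1+o(1))$ together with $\frac{n}{b_n^2}\cdot\frac{y_n^3}{\sqrt{n}}\zeta(\cdot) \to 0$ and $\frac{n}{b_n^2}\log\nu(\varphi) \to 0$ (since $b_n^2/n\to\infty$ and $\nu(\varphi)>0$ is a fixed constant), we obtain
\begin{align*}
\lim_{n\to\infty}\frac{n}{b_n^2}\log\mathbb{E}\!\left[\varphi(X_n^x)\mathbbm{1}_{\{\log\|G_n\|-n\lambda \geq b_n u\}}\right] = -\frac{u^2}{2\sigma^2},
\end{align*}
uniformly in $x\in K$; the error term $\|\varphi\|_\gamma O((y_n+1)/\sqrt{n})$ is harmless because $y_n/\sqrt{n}\to 0$, so the bracket is bounded away from $0$ and $\infty$ and contributes $o(b_n^2/n)$ after taking log. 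The symmetric statement for $(-\infty, -b_n u]$ follows identically from \eqref{CramerNorm02}. One also needs the boundary case $u=0$: $\frac{n}{b_n^2}\log\mathbb{E}[\varphi(X_n^x)\mathbbm{1}_{\{\log\|G_n\|-n\lambda\geq 0\}}]\to 0$, immediate from \eqref{CramerNorm01} with $y=0$, or more robustly from $\mathbb{E}[\varphi(X_n^x)]\to\nu(\varphi)$ (since $\varphi$ may be negative somewhere, replace $\varphi$ by $\varphi + \|\varphi\|_\infty$ and $\|\varphi\|_\infty\mathbbm{1}$ separately to control both sides).

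Next I would assemble the general Borel set $B$. For the upper bound, given $\bar B$ write $u_0 = \inf_{y\in\bar B}|y|$ (with the convention that the bound is trivial, equal to $-\infty$, when $0\in\bar B$); then $\{\log\|G_n\|-n\lambda \in b_n B\}\subseteq \{|\log\|G_n\|-n\lambda|\geq b_n u_0\}$, and since $\varphi\leq \|\varphi\|_\infty\mathbbm{1}$ pointwise, splitting the indicator over the two half-lines and using $\log(a+b)\leq \log 2 + \max\{\log a,\log b\}$ gives $\limsup \frac{n}{b_n^2}\log\mathbb{E}[\cdots]\leq -\frac{u_0^2}{2\sigma^2} = -\inf_{y\in\bar B}\frac{y^2}{2\sigma^2}$. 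For the lower bound, fix an interior point $y_0\in B^\circ$ and a small $\varepsilon>0$ with $(y_0-\varepsilon, y_0+\varepsilon)\subset B$; then $\{\log\|G_n\|-n\lambda\in b_n B\}\supseteq\{\log\|G_n\|-n\lambda\in b_n(y_0-\varepsilon, y_0+\varepsilon)\}$, whose probability-with-weight equals the difference of two half-line quantities already computed, and one checks via the sharp one-sided limit that $\liminf\frac{n}{b_n^2}\log\mathbb{E}[\cdots]\geq -\frac{(y_0-\varepsilon)^2}{2\sigma^2}$ when $y_0>0$ (and symmetrically when $y_0<0$; the case $y_0=0$ is the boundary statement above). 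Letting $\varepsilon\downarrow 0$ and optimizing over $y_0\in B^\circ$ yields the claimed lower bound $-\inf_{y\in B^\circ}\frac{y^2}{2\sigma^2}$. The uniformity in $x\in K$ is inherited at each step because every invocation of Theorem \ref{Thm-Cram-Norm} is uniform in $x\in K$ and the constants in the tail asymptotics of $\Phi$ do not depend on $x$.

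The only genuinely delicate point — and the main obstacle — is handling the target function $\varphi$ when it is not nonnegative: the expansions in Theorem \ref{Thm-Cram-Norm} are stated with a multiplicative bracket $[\nu(\varphi) + \|\varphi\|_\gamma O((y+1)/\sqrt{n})]$, and to take logarithms we must know this bracket stays bounded away from $0$. This is guaranteed precisely by the hypothesis $\nu(\varphi)>0$ together with $y_n/\sqrt{n}\to 0$, which is why that sign condition appears in the corollary; for the difference-of-half-lines estimate used in the lower bound one must additionally verify that the two Cram\'er factors $e^{\pm y_n^3\zeta(\cdot)/\sqrt{n}}$ and the two normal tails combine so that no cancellation occurs at the logarithmic scale, which is straightforward since on the window $b_n(y_0-\varepsilon, y_0+\varepsilon)$ the dominant term is the tail at the nearer endpoint. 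All remaining manipulations are the routine Laplace-principle bookkeeping sketched above.
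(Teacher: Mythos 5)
Your approach is the paper's: deduce the MDP from the Cram\'er-type expansions of Theorem \ref{Thm-Cram-Norm} by first computing the $(n/b_n^2)$-scaled logarithmic limits on half-lines $\{\log\|G_n\|-n\lambda \geq b_n u\}$, then passing to arbitrary Borel sets via interior/closure approximation. The paper outsources the patching step to Lemma 4.4 of \cite{HL12} (cited in the proof of Theorem \ref{ciro-LDP002}, with Corollary \ref{Ch5_coro-mdp-norm} explicitly said to go the same way); you re-derive it inline, which is equivalent.

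There is, however, an algebra slip that propagates: to rewrite $\{\log\|G_n\|-n\lambda \geq b_n u\}$ in the form $\{\log\|G_n\|-n\lambda \geq \sqrt{n}\,\sigma\, y_n\}$ used in \eqref{CramerNorm01}, one must take $y_n = u b_n/(\sigma\sqrt{n})$, not $y_n = u\sqrt{n}/(\sigma b_n)$; with your formula one gets $\sqrt{n}\sigma y_n = u n/b_n$, which is not $u b_n$. The downstream asymptotics inherit this: your claim $\log(1-\Phi(y_n)) = -\tfrac{u^2 n}{2\sigma^2 b_n^2}(1+o(1))$, after multiplication by $n/b_n^2$, gives $-u^2 n^2/(2\sigma^2 b_n^4)\to 0$ (since $b_n^2/n\to\infty$), not the claimed $-u^2/(2\sigma^2)$. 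With the corrected $y_n$, $y_n^2/2 = u^2 b_n^2/(2\sigma^2 n)$ and $\tfrac{n}{b_n^2}\log(1-\Phi(y_n))\to -u^2/(2\sigma^2)$, as needed. Separately, the assertion that $y_n^3/\sqrt{n}\to 0$ is false in general (take $b_n = n^{3/4}$: then $y_n^3/\sqrt{n} \asymp n^{1/4}\to\infty$); what is true, and is what the argument actually requires, is that the \emph{scaled} quantity $\tfrac{n}{b_n^2}\cdot\tfrac{y_n^3}{\sqrt{n}}\,\zeta(y_n/\sqrt{n}) = O(b_n/n)\to 0$. With these repairs the remaining bookkeeping (boundedness of the bracket thanks to $\nu(\varphi)>0$, dominance of the nearer endpoint in the difference-of-half-lines lower bound, uniformity in $x\in K$) is sound and matches the paper's route.
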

Note that the target function $\varphi$ in \eqref{Ch5_MDP_Norm} is not necessarily positive and it can vanish 
on some part of the projective space $\mathbb{S}_+^{d-1}$. 
The moderate deviation principle \eqref{Ch5_MDP_Norm} is new, even for $\varphi = \mathbf{1}$. 

As in Theorem \ref{Thm-BerryE-Norm}, it would be interesting to prove that Theorem \ref{Thm-Cram-Norm}
holds uniformly in $x \in \mathbb{S}_+^{d-1}$ instead of $x \in K$. 

%\begin{theorem}\label{Thm-Cram-Posi}
%Assume conditions \ref{Condi-MomentH}, \ref{Condi-KestenH} and \ref{Condi-VarianceH}.
%Then, uniformly in $x, f \in \mathbb{S}_+^{d-1}$ and $y \in [0, o(\sqrt{n} )]$,
%we have
%\begin{align*}
%\frac{\mathbb{P}\left( \frac{\log \langle f, G_nx \rangle
%- n\Lambda'(0)}{ \sigma_0 \sqrt{n} } \geq y\right)} {1-\Phi(y)}
%= e^{ \frac{y^3}{\sqrt{n}}\zeta ( \frac{y}{\sqrt{n}} ) }
%\left[ 1 + O\left( \frac{y+1}{\sqrt{n}} \right) \right]
%\end{align*}
%and
%\begin{align*}
%\frac{\mathbb{P}\left( \frac{\log \langle f, G_nx \rangle
% -n\Lambda'(0)}{ \sigma_0 \sqrt{n} }\leq -y\right)}{\Phi(-y)}
%= e^{ - \frac{y^3}{\sqrt{n}}\zeta (-\frac{y}{\sqrt{n}} ) }
%\left[ 1 + O\left( \frac{y+1}{\sqrt{n}} \right) \right].
%\end{align*}
%Hereafter, $t\mapsto \zeta(t)$ is Cram\'{e}r's series which converges for $|t|$ small enough.
%\end{theorem}

Now we formulate Cram\'{e}r type moderate deviation expansions for the scalar product $\langle f, G_nx \rangle$
as well as for the spectral radius $\rho(G_n)$. 

\begin{theorem}\label{Thm-Cram-Posi-tag}
Assume conditions \ref{Condi-MomentH}, \ref{Condi-KestenH} and \ref{Condi-VarianceH}.
%Let $\varphi$ be a H\"{o}lder continuous function on $\mathbb{S}_+^{d-1}$ satisfying $\nu(\varphi) \neq 0$.
Then, 
%for any H\"{o}lder continuous function $\varphi$ on $\mathbb{S}_+^{d-1}$ satisfying $\nu(\varphi) \neq 0$,
we have:  \\ 
\rm{(1)} as $n \to \infty$, uniformly in $f, x \in \mathbb{S}_+^{d-1}$, $y \in [0, o(\sqrt{n} )]$ 
and $\varphi \in \mathcal{B}_{\gamma}$, 
\begin{align}
  \frac{\mathbb{E}
\left[ \varphi(X_n^x) \mathbbm{1}_{ \{ \log \langle f, G_nx \rangle - n \lambda \geq \sqrt{n} \sigma y \} } \right] }
{ 1-\Phi(y) }
&  = e^{ \frac{y^3}{\sqrt{n}}\zeta (\frac{y}{\sqrt{n}} ) }
\left[ \nu(\varphi) +  \|\varphi \|_{\gamma}  O \Big( \frac{y+1}{\sqrt{n}} \Big) \right],  \label{CramerThm01}  \\
\frac{\mathbb{E}
\left[ \varphi(X_n^x) \mathbbm{1}_{ \{ \log \langle f, G_nx \rangle - n \lambda \leq - \sqrt{n} \sigma y \} } \right] }
{ \Phi(-y) }
&  = e^{ - \frac{y^3}{\sqrt{n}}\zeta (-\frac{y}{\sqrt{n}} ) }
\left[ \nu(\varphi) +  \|\varphi \|_{\gamma} O \Big( \frac{ y+1 }{ \sqrt{n} } \Big) \right];  \label{CramerThm02}
\end{align}
\rm{(2)} for any compact set $K \subset (\mathbb{S}_+^{d-1})^{\circ}$, as $n \to \infty$, 
 uniformly in $x \in K$, $y \in [0, o(\sqrt{n} )]$ and $\varphi \in \mathcal{B}_{\gamma}$, 
\begin{align}
\frac{\mathbb{E}
\left[ \varphi(X_n^x) \mathbbm{1}_{ \{ \log \rho(G_n) - n \lambda \geq \sqrt{n} \sigma y \} } \right] }
{ 1-\Phi(y) }
&  = e^{ \frac{y^3}{\sqrt{n}}\zeta (\frac{y}{\sqrt{n}} ) }
     \left[ \nu(\varphi) +   \|\varphi \|_{\gamma} O \Big( \frac{y+1}{\sqrt{n}} \Big) \right],  \label{CramerThm03}  \\
\frac{\mathbb{E}
\left[ \varphi(X_n^x) \mathbbm{1}_{ \{ \log \rho(G_n) - n \lambda \leq - \sqrt{n} \sigma y \} } \right] }
{ \Phi(-y) }
&  = e^{ - \frac{y^3}{\sqrt{n}}\zeta (-\frac{y}{\sqrt{n}} ) }
 \left[ \nu(\varphi) +  \|\varphi \|_{\gamma} O \Big( \frac{ y+1 }{ \sqrt{n} } \Big) \right].   \label{CramerThm04}
\end{align}
\end{theorem}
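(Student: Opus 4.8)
The plan is to obtain all four expansions from the Cram\'er type moderate deviation expansion for the norm cocycle $\log|G_nx|$ (jointly with the Markov chain $X_n^x$) established in \cite{XGL19b}, together with Theorem \ref{Thm-Cram-Norm} for $\log\|G_n\|$, by purely deterministic comparisons. First I would reduce to non-negative $\varphi$: since the parts $\varphi_\pm=\max\{\pm\varphi,0\}$ satisfy $\|\varphi_\pm\|_{\gamma}\leq\|\varphi\|_{\gamma}$ and $|\nu(\varphi)|\leq\|\varphi\|_{\gamma}$, linearity recovers the general case from $\varphi\geq0$. I would also observe that on any fixed bounded range of $y$ the asserted expansions are exactly the Berry--Esseen bounds (Theorems \ref{Thm-BerryE-Norm}, \ref{Thm-BerryE-Posi} and the analogue for $\log|G_nx|$ in \cite{XGL19b}), since there $\frac{y^3}{\sqrt n}\zeta(\frac{y}{\sqrt n})=O(\frac1{\sqrt n})$; hence it suffices to treat $y\geq y_0$ for a fixed $y_0>0$, which keeps all the shifted arguments below non-negative once $n$ is large.

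The crucial deterministic fact, and the only place condition \ref{Condi-KestenH} is used, is that there is a compact set $K_C\subset(\mathbb{S}_+^{d-1})^{\circ}$, depending only on $C$ and $d$, with $X_n^x\in K_C$ for every $x\in\mathbb{S}_+^{d-1}$ and every $n\geq1$. Indeed, condition \ref{Condi-KestenH} is preserved under products (Lemma \ref{lem equiv Kesten}), so every column of $G_n$ has its entries comparable up to the factor $C$; thus $G_nx$ has all coordinates within a factor $C$ of one another and $X_n^x=G_nx/|G_nx|\in K_C:=\{v\in\mathbb{S}_+^{d-1}:\max_iv_i\leq C\min_iv_i\}$, which is compact in the interior since $\min_iv_i\geq(C\sqrt d)^{-1}$ there. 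Writing $G_nx=|G_nx|\,X_n^x$ and using $\langle f,X_n^x\rangle\in[(C\sqrt d)^{-1},1]$ (here $\sum_if_i\geq|f|=1$), this yields, uniformly in $f,x\in\mathbb{S}_+^{d-1}$ and $n\geq1$,
\begin{align*}
\bigl|\log\langle f,G_nx\rangle-\log|G_nx|\bigr|=\bigl|\log\langle f,X_n^x\rangle\bigr|\leq\log(C\sqrt d)=:M,
\end{align*}
which is the announced comparison between $\log G_n^{i,j}$ and $\log|G_ne_j|$. For the spectral radius, condition \ref{Condi-KestenH} forces every $g\in\supp\mu$, hence every $G_n$ with $n\geq1$, to have strictly positive entries, so $\rho(G_n)$ is its Perron eigenvalue; the Collatz--Wielandt formula gives $\min_i(G_nx)_i/x_i\leq\rho(G_n)\leq\max_i(G_nx)_i/x_i$ for every $x>0$, whence, using $\max_ix_i\leq1$ and $(X_n^x)_i\geq(C\sqrt d)^{-1}$ for $n\geq1$, $\log\rho(G_n)\geq\log|G_nx|-M$ uniformly in $x\in\mathbb{S}_+^{d-1}$ and $n\geq1$, while $\rho(G_n)\leq\|G_n\|$ supplies the matching upper bound through Theorem \ref{Thm-Cram-Norm} --- which is precisely why a compact $K\subset(\mathbb{S}_+^{d-1})^{\circ}$ has to be fixed in part (2).

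With these comparisons the transfer is routine and identical in each case; consider \eqref{CramerThm01}. For $\varphi\geq0$, from $\log|G_nx|-M\leq\log\langle f,G_nx\rangle\leq\log|G_nx|$ and $y_+:=y+M/(\sqrt n\sigma)\geq0$ one has
\begin{align*}
&\mathbb{E}\!\left[\varphi(X_n^x)\mathbbm{1}_{\{\log|G_nx|-n\lambda\geq\sqrt n\sigma y_+\}}\right]\\
&\qquad\leq\mathbb{E}\!\left[\varphi(X_n^x)\mathbbm{1}_{\{\log\langle f,G_nx\rangle-n\lambda\geq\sqrt n\sigma y\}}\right]\leq\mathbb{E}\!\left[\varphi(X_n^x)\mathbbm{1}_{\{\log|G_nx|-n\lambda\geq\sqrt n\sigma y\}}\right].
\end{align*}
Dividing by $1-\Phi(y)$ and applying the moderate deviation expansion for $\log|G_nx|$ of \cite{XGL19b} at $y$ and at $y_+$, the expansion \eqref{CramerThm01} follows from the elementary uniform estimates $\frac{1-\Phi(y_+)}{1-\Phi(y)}=1+O(\frac{y+1}{\sqrt n})$ and $e^{\frac{y_+^3}{\sqrt n}\zeta(y_+/\sqrt n)}=e^{\frac{y^3}{\sqrt n}\zeta(y/\sqrt n)}(1+O(\frac{y+1}{\sqrt n}))$ --- valid because $-\frac{d}{dy}\log(1-\Phi(y))\asymp1+y$ and $\zeta,\zeta'$ are bounded near $0$, so the Cram\'er exponent moves by $O(y^2/n)=O(\frac{y+1}{\sqrt n})$ --- together with $|\nu(\varphi)|\leq\|\varphi\|_{\gamma}$ to absorb all the errors into one $\|\varphi\|_{\gamma}O(\frac{y+1}{\sqrt n})$. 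The lower tail \eqref{CramerThm02} is the same with $\Phi(-y)$ in place of $1-\Phi(y)$; for \eqref{CramerThm03}--\eqref{CramerThm04} one sandwiches $\mathbb{E}[\varphi(X_n^x)\mathbbm{1}_{\{\log\rho(G_n)-n\lambda\geq\sqrt n\sigma y\}}]$ between the corresponding quantity for $\log\|G_n\|$ (apply Theorem \ref{Thm-Cram-Norm}) and the one for $\log|G_nx|$ with $y$ shifted to $y+M/(\sqrt n\sigma)$ (apply \cite{XGL19b}), and argues symmetrically for the lower tail.

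The one step requiring genuine care is the deterministic comparison above: the fact that condition \ref{Condi-KestenH} confines the direction $X_n^x$ to a fixed compact subset of the interior of $\mathbb{S}_+^{d-1}$ for all $n\geq1$, with the resulting uniform control of the boundary factors $\log\langle f,X_n^x\rangle$ and $\log((X_n^x)_i/x_i)$. Everything after that is bookkeeping built on the results already available from \cite{XGL19b} and Theorem \ref{Thm-Cram-Norm}.
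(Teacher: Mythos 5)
Your proposal is correct and follows essentially the same route as the paper: reduce to $\varphi\geq 0$, establish the deterministic two-sided comparison $\log|G_nx|-M\leq\log\langle f,G_nx\rangle\leq\log|G_nx|$ via Lemma \ref{lem equiv Kesten} (the paper's \eqref{Posi-ScalLower01}), the one-sided bound $\log|G_nx|-M\leq\log\rho(G_n)\leq\log\|G_n\|$ via the Collatz--Wielandt formula (the paper's \eqref{Ine_Spectral01}), and then transfer the moderate deviation expansion for $\log|G_nx|$ (Lemma \ref{MainThmNormTarget}) together with Theorem \ref{Thm-Cram-Norm} through the shift $y\mapsto y\pm M/(\sigma\sqrt n)$, controlling $\zeta$, the ratio of normal tails, and the error term exactly as in \eqref{Pf_Cram_Ine_kkk}--\eqref{Crame_Norm_Low03}. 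The only cosmetic discrepancy is your constant bookkeeping (the compact set should be written with $1/\epsilon$ rather than $C$, where $\epsilon$ is the constant from Lemma \ref{lem equiv Kesten}), but this does not affect the validity of the argument.
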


As a particular case of
\eqref{CramerThm01} and \eqref{CramerThm02} with $f = e_i$ and $x = e_j$,
we get the Cram\'{e}r type moderate deviation expansions for the entries $G_n^{i,j}$. 
The expansions \eqref{CramerThm01}-\eqref{CramerThm04} are all new even for $\varphi = \mathbf{1}$.

We end this subsection by giving moderate deviation principles 
for the scalar product $\langle f, G_nx \rangle$ and for the spectral radius $\rho(G_n)$. 
Recall that for a Borel set $B$,  we write respectively $B^{\circ}$ and $\bar{B}$ for its interior and closure. 
 
\begin{theorem} \label{ciro-LDP002}
Assume either conditions \ref{Condi-MomentH}, \ref{Condi-KestenH}, \ref{Condi-VarianceH}, 
or conditions \ref{Condi-MomentH}, \ref{Condi-WeakKest}, \ref{Condi-NonArith}.
Then, for any real-valued function $\varphi \in \mathcal{B}_{\gamma}$ satisfying $\nu(\varphi) > 0$, 
for any Borel set $B \subseteq \mathbb{R}$ and any positive sequence $(b_n)_{n\geq 1}$ satisfying
$\frac{b_n}{n}\rightarrow0$ and $\frac{b_n}{\sqrt{n}}\to \infty$, 
we have
\begin{enumerate}
\item[\rm{(1)}]
uniformly in $f, x \in \mathbb{S}_+^{d-1}$,
\begin{align} \label{MDP_Entry_nn}
 -\inf_{y\in B^{\circ}} \frac{y^2}{2\sigma^2} & \leq 
\liminf_{n\to \infty} \frac{n}{b_n^{2}}
\log  
\mathbb{E} \left[  \varphi(X_n^x)  
\mathbbm{1}_{ \left\{ \frac{\log \langle f, G_n x \rangle - n\lambda }{b_n} \in B  \right\} }  \right]
\nonumber\\
 &  \leq  \limsup_{n\to \infty}\frac{n}{b_n^{2}}
\log   
\mathbb{E}  \left[   \varphi(X_n^x) 
\mathbbm{1}_{  \left\{ \frac{\log \langle f, G_n x \rangle - n\lambda }{b_n} \in B  \right\} }   \right] 
\leq - \inf_{y\in \bar{B}} \frac{y^2}{2\sigma^2}; 
\end{align}

\item[\rm{(2)}]
for any compact set $K \subset (\mathbb{S}_+^{d-1})^{\circ}$, uniformly in $x \in K$, 
\begin{align} \label{MDP_Spe_Radius_nn}
 -\inf_{y\in B^{\circ}} \frac{y^2}{2\sigma^2}  & \leq    
\liminf_{n\to \infty} \frac{n}{b_n^{2}}
\log  
\mathbb{E} \left[  \varphi(X_n^x)  
\mathbbm{1}_{ \left\{ \frac{\log \rho(G_n) - n\lambda }{b_n} \in B  \right\} }  \right]
\nonumber\\
&  \leq  \limsup_{n\to \infty}\frac{n}{b_n^{2}}
\log   
\mathbb{E}  \left[   \varphi(X_n^x) 
\mathbbm{1}_{  \left\{ \frac{\log \rho(G_n) - n\lambda }{b_n} \in B  \right\} }   \right] 
\leq - \inf_{y\in \bar{B}} \frac{y^2}{2\sigma^2}. 
\end{align}
\end{enumerate}
\end{theorem}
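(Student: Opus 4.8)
The plan is to deduce Theorem~\ref{ciro-LDP002} from the Cram\'er type moderate deviation expansions already proved, exactly as Corollary~\ref{Ch5_coro-mdp-norm} is deduced from Theorem~\ref{Thm-Cram-Norm}, and to handle the case of the weaker condition \ref{Condi-WeakKest} by combining the Berry--Esseen bounds of Theorem~\ref{Thm-BerryE-Posi_Weak} with standard large deviation machinery. First I would treat part~(1) under conditions \ref{Condi-MomentH}, \ref{Condi-KestenH}, \ref{Condi-VarianceH}. Fix a real-valued $\varphi\in\mathcal B_\gamma$ with $\nu(\varphi)>0$, a Borel set $B$, and $(b_n)$ with $b_n/n\to 0$, $b_n/\sqrt n\to\infty$. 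Write $y_n=b_n/\sqrt n\to\infty$ but $y_n=o(\sqrt n)$, so that \eqref{CramerThm01} and \eqref{CramerThm02} apply with $y$ replaced by $t y_n$ for any fixed $t>0$. For the upper bound, cover $\bar B$ by finitely many intervals and estimate, for each half-line $[t,\infty)$ or $(-\infty,-t]$ meeting $\bar B$, the probability $\mathbb E[\varphi(X_n^x)\mathbbm 1_{\{\cdot\ge \sqrt n\sigma t y_n\}}]$ by $(1-\Phi(ty_n))\exp(\tfrac{(ty_n)^3}{\sqrt n}\zeta(\tfrac{ty_n}{\sqrt n}))(\nu(\varphi)+O(y_n/\sqrt n))$; since $(ty_n)^3/\sqrt n=t^3 b_n^3/n^2\cdot(\sqrt n/b_n)\cdot\ldots$ — more precisely $\tfrac{(ty_n)^3}{\sqrt n}=t^3 b_n^3/n^2\to 0$ because $b_n=o(n)$, one gets $\tfrac n{b_n^2}\log(1-\Phi(ty_n))\to -t^2/(2\sigma^2)$ by the classical tail asymptotics $1-\Phi(u)\sim \phi(u)/u$, while the Cram\'er correction and the $\nu(\varphi)+O(\cdot)$ factor contribute $o(b_n^2/n)$ and $O(1)$ respectively. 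The lower bound is symmetric, using an open interval $(t',t'')\subset B^\circ$ and subtracting two half-line estimates. One must also deal with the part of $\mathbb E[\varphi(X_n^x)\mathbbm 1_{\{\cdot\in b_n B\}}]$ coming from $\varphi$ of mixed sign; since $\nu(\varphi)>0$ the dominant contribution is positive and the negative part is controlled by applying the expansion to $|\varphi|\le\|\varphi\|_\gamma\mathbf 1$ plus the normalized positive/negative decomposition, so the logarithmic asymptotics are unaffected.

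Next I would establish part~(2) for the spectral radius under the same triple of conditions, which is identical: replace $\log\langle f,G_nx\rangle$ by $\log\rho(G_n)$ and \eqref{CramerThm01}--\eqref{CramerThm02} by \eqref{CramerThm03}--\eqref{CramerThm04}, noting that now the estimate is only uniform in $x\in K$ for a compact $K\subset(\mathbb S_+^{d-1})^\circ$, which is exactly the uniformity asserted in \eqref{MDP_Spe_Radius_nn}. No new idea is needed here.

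Then I would turn to the alternative hypothesis, conditions \ref{Condi-MomentH}, \ref{Condi-WeakKest}, \ref{Condi-NonArith}. Here the sharp Cram\'er expansions for the entries and spectral radius are not available, only the Berry--Esseen bounds \eqref{BerryE01_Weak} and \eqref{BerryE02_Weak}, which however suffice for a moderate deviation \emph{principle} (only logarithmic precision is needed). The plan is: for the upper bound on a half-line, bound $\mathbb E[\varphi(X_n^x)\mathbbm 1_{\{\cdot\ge\sqrt n\sigma t y_n\}}]$ above by $\|\varphi\|_\gamma\,\mathbb P(\cdot\ge\sqrt n\sigma t y_n)$, and in turn by $\|\varphi\|_\gamma[(1-\Phi(ty_n))+C\log n/\sqrt n]$ using \eqref{BerryE01_Weak} with $\varphi=\mathbf 1$; since $\tfrac n{b_n^2}\log(\log n/\sqrt n)\to 0$ and $\tfrac n{b_n^2}\log(1-\Phi(ty_n))\to -t^2/(2\sigma^2)$, the Gaussian term dominates whenever $t^2/(2\sigma^2)$ is in the relevant range — which it is for $t$ corresponding to points of $\bar B$, because $b_n/\sqrt n\to\infty$ makes $1-\Phi(ty_n)$ decay faster than any fixed polynomial, hence dominates $\log n/\sqrt n$. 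For the lower bound I would use the two-sided Berry--Esseen estimate on a small interval $(t',t'')\subset B^\circ$: $\mathbb E[\varphi(X_n^x)\mathbbm 1_{\{\cdot\in(\ldots)\}}]\ge \nu(\varphi)(\Phi(t''y_n)-\Phi(t'y_n))-C\|\varphi\|_\gamma\log n/\sqrt n$, and the same competition shows the Gaussian increment dominates. The spectral radius case uses \eqref{BerryE02_Weak} identically with the compact-$K$ uniformity.

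The main obstacle is not any single step but ensuring that, under \ref{Condi-WeakKest}, the crude Berry--Esseen error $C\log n/\sqrt n$ really is negligible at the moderate deviation scale $b_n$ relative to the Gaussian tail, \emph{uniformly} over the finitely many threshold values $t$ used in the covering argument, and simultaneously that the sign-changing $\varphi$ does not destroy the lower bound when $\nu(\varphi)>0$ — i.e.\ that the positive mass genuinely dominates after the exponential rescaling. Concretely one needs $\inf\{t^2/(2\sigma^2): t\in\bar B,\ |t|<\infty\}$ finite whenever $\bar B$ is bounded away from the relevant region, and on the other hand when $0\in\bar B$ the infimum is $0$ and both bounds are trivial; the delicate case is intermediate $B$, where one must choose the interval endpoints $t',t''$ close enough that $t'^2/(2\sigma^2)$ approximates $\inf_{B^\circ}t^2/(2\sigma^2)$ while still keeping $1-\Phi(t'y_n)$ super-polynomially small — this is possible precisely because $y_n\to\infty$, and it is here that the hypothesis $b_n/\sqrt n\to\infty$ is used in an essential way. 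Everything else is a routine adaptation of the proof of Corollary~\ref{Ch5_coro-mdp-norm}.
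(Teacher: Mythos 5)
Your treatment of the first alternative (\ref{Condi-MomentH}, \ref{Condi-KestenH}, \ref{Condi-VarianceH}) is exactly what the paper does: the MDP follows from Theorem~\ref{Thm-Cram-Posi-tag} by the same routine argument that gave Corollary~\ref{Ch5_coro-mdp-norm}, and part~(2) is the same with the $\rho(G_n)$ expansions. Nothing to add there.

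Your treatment of the second alternative (\ref{Condi-MomentH}, \ref{Condi-WeakKest}, \ref{Condi-NonArith}) has a genuine gap, and the place you flag as "the main obstacle" is precisely where the argument breaks. You propose to bound the moderate deviation probability using the Berry--Esseen bound \eqref{BerryE01_Weak}, whose error term is $C\log n/\sqrt n$, and you assert that $\tfrac n{b_n^2}\log(\log n/\sqrt n)\to 0$ and that $1-\Phi(ty_n)$ "decays faster than any fixed polynomial" because $y_n=b_n/\sqrt n\to\infty$. Neither claim holds for all admissible $(b_n)$. Indeed $\log(\log n/\sqrt n)\sim -\tfrac12\log n$, so $\tfrac n{b_n^2}\log(\log n/\sqrt n)\sim -\tfrac{n\log n}{2b_n^2}$; if for example $b_n=\sqrt{n\log n}$ this tends to $-\tfrac12$, not $0$. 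On the other side, $1-\Phi(ty_n)\sim e^{-t^2 b_n^2/(2n)}$, whose decay is governed by $b_n^2/n$, which tends to infinity but arbitrarily slowly (e.g.\ like $\log\log n$, giving a subpolynomial rate). Thus for $b_n$ near $\sqrt n$ the additive Berry--Esseen error $\log n/\sqrt n$ can dominate $1-\Phi(ty_n)$, and your upper bound would only yield $\le\max\bigl(-\tfrac{t^2}{2\sigma^2},\ -\tfrac12\lim\tfrac{n\log n}{b_n^2}\bigr)$, which is weaker than the asserted $-\tfrac{t^2}{2\sigma^2}$.

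The paper avoids this by \emph{not} passing through the Berry--Esseen bound at all under \ref{Condi-WeakKest}. Instead it reduces (via Lemma~4.4 of \cite{HL12}) to one-sided limits $\lim_n\tfrac n{b_n^2}\log\mathbb E[\varphi(X_n^x)\bbm 1_{\{\cdot\ge y b_n\}}]=-\tfrac{y^2}{2\sigma^2}$, proves the upper bound from Lemma~\ref{MainThmNormTarget} and the monotonicity $\langle f,G_nx\rangle\le|G_nx|$, and for the lower bound combines the sharp MDP for $\log|G_nx|$ with the H\"older regularity estimate \eqref{Pf_RegPosi001}--\eqref{Regu_n_k_01} applied at the scale $k=\lfloor C_3 b_n^2/n\rfloor$ rather than $k\sim\log n$. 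Choosing $C_3$ large, the regularity error $e^{-C_2k}=e^{-C_2C_3 b_n^2/n}$ is made exponentially smaller than the Gaussian main term $e^{-(y^2/(2\sigma^2)+\eta)b_n^2/n}$, which is exactly the competition you cannot win with the fixed $\log n$ scale built into Theorem~\ref{Thm-BerryE-Posi_Weak}. So the resolution is to re-tune the truncation level $k$ to the deviation scale $b_n^2/n$, using Proposition~\ref{PropRegularity} directly, rather than to invoke the Berry--Esseen bound as a black box.
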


Under conditions \ref{Condi-MomentH}, \ref{Condi-KestenH} and \ref{Condi-VarianceH}, 
the moderate deviation principles \eqref{MDP_Entry_nn} and \eqref{MDP_Spe_Radius_nn} 
follow directly from Theorem \ref{Thm-Cram-Posi-tag}, 
just as we obtained \eqref{Ch5_MDP_Norm} from Theorem \ref{Thm-Cram-Norm}. 
Under conditions \ref{Condi-MomentH}, \ref{Condi-WeakKest}, \ref{Condi-NonArith}, 
\eqref{MDP_Entry_nn} and \eqref{MDP_Spe_Radius_nn} cannot be deduced from 
Theorem \ref{Thm-Cram-Posi-tag}. 
In fact, the proof turns out to be delicate and is carried out using the H\"{o}lder regularity
of the stationary measure $\nu$, see Proposition \ref{PropRegularity}.

\subsection{Formulas for the asymptotic variance}
In this section, we give alternative expresssions for the asymptotic variance $\sigma^2$ defined by \eqref{Formu_sig}.  
These expressions can be useful while applying the theorems and the corollaries stated before,
%Theorems \ref{Thm-BerryE-Norm}, \ref{Thm-BerryE-Posi}, \ref{Thm-Cram-Norm}, \ref{Thm-Cram-Posi-tag} \ref{Thm-Local-Posi} and
%Corollaries \ref{Ch5_coro-mdp-norm}, \ref{ciro-LDP002}
 where $\sigma$ appears.  

\begin{proposition}\label{Prop_Variance}
(1) Under conditions \ref{Condi-MomentH} and \ref{CondiAPH}, we have 
\begin{align} \label{exp-sigma2-1}
\sigma^2 = \lim_{n\to\infty} \frac{1}{n} \mathbb{E} \left[ \big( \log \|G_n \| - n \lambda \big)^2 \right].
\end{align}
(2) Under conditions \ref{Condi-MomentH} and \ref{Condi-KestenH}, we have 
\begin{align} \label{exp-sigma2-2}
\sigma^2 =   \lim_{n\to\infty} \frac{1}{n} 
            \mathbb{E} \left[ \big( \log \langle f, G_n x \rangle - n \lambda  \big)^2 \right]
         =   \lim_{n\to\infty} \frac{1}{n} \mathbb{E} \left[ \big( \log \rho(G_n) - n \lambda \big)^2 \right],
\end{align}
where the convergence in the first equality holds uniformly in $f, x \in \mathbb{S}_+^{d-1}$. 
\end{proposition}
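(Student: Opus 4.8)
The plan is to derive both identities in Proposition \ref{Prop_Variance} from the already-established convergence \eqref{Formu_sig}, namely $\sigma^2 = \lim_{n\to\infty}\frac1n \mathbb{E}[(\log|G_n x| - n\lambda)^2]$ uniformly in $x \in \mathbb{S}_+^{d-1}$, by a sandwiching argument based on the elementary comparison lemmas announced in the introduction (Lemma \ref{Lem_Norm} comparing $\|G_n\|$ with $|G_n x|$ for $x>0$, and the quantitative entry-versus-vector-norm comparison relying on \ref{Condi-KestenH}). For part (1), first I would fix a vector $x_0 \in (\mathbb{S}_+^{d-1})^\circ$ and write $\log\|G_n\| = \log|G_n x_0| + r_n$ where $r_n = \log\frac{\|G_n\|}{|G_n x_0|} \geq 0$. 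The point is that under \ref{Condi-MomentH} and \ref{CondiAPH} this remainder is $O(1)$ in an $L^2$ (indeed exponential-moment) sense, uniformly in $n$: this is exactly the content of Lemma \ref{Lem_Norm}. Then
\begin{align*}
\mathbb{E}\big[(\log\|G_n\| - n\lambda)^2\big] = \mathbb{E}\big[(\log|G_n x_0| - n\lambda)^2\big] + 2\mathbb{E}\big[(\log|G_n x_0| - n\lambda) r_n\big] + \mathbb{E}[r_n^2],
\end{align*}
and by Cauchy--Schwarz the cross term is bounded by $2\big(\mathbb{E}[(\log|G_n x_0|-n\lambda)^2]\big)^{1/2}\big(\mathbb{E}[r_n^2]\big)^{1/2} = O(\sqrt{n})$, while $\mathbb{E}[r_n^2] = O(1)$. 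Dividing by $n$ and sending $n\to\infty$ gives \eqref{exp-sigma2-1}.

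For part (2), under \ref{Condi-MomentH} and \ref{Condi-KestenH}, the strategy is identical but the comparison is between $\log\langle f, G_n x\rangle$ and $\log|G_n x|$. Condition \ref{Condi-KestenH} (equivalently its multiplicative-stability reformulation in Lemma \ref{lem equiv Kesten}) ensures that for any unit vector $f$ and any $x$, the ratio $\frac{\langle f, G_n x\rangle}{|G_n x|}$ is bounded above and below by deterministic constants depending only on $d$ and the constant $C$ in \ref{Condi-KestenH} — indeed all entries of $G_n$ in a given column are comparable, so $\langle f, G_n x\rangle$ is comparable to $|G_n x|$ uniformly. Hence $\tilde r_n := \log\frac{\langle f, G_n x\rangle}{|G_n x|}$ is bounded by a constant \emph{uniformly in $f, x$ and $n$ (and $\omega$)}, so the same Cauchy--Schwarz expansion as above yields the first equality of \eqref{exp-sigma2-2}, with the uniformity in $f, x$ inherited from the uniformity in \eqref{Formu_sig} and the pointwise bound on $\tilde r_n$. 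For the spectral radius, I would invoke the Collatz--Wielandt formula as advertised in the introduction: for a positive matrix $g$, $\rho(g) = \min_{x>0}\max_{1\le i\le d}\frac{(gx)_i}{x_i}$ and $\rho(g) = \max_{x>0}\min_{1\le i\le d}\frac{(gx)_i}{x_i}$, which combined with \ref{Condi-KestenH} sandwiches $\log\rho(G_n)$ between $\log\langle f, G_n x\rangle$-type quantities up to a bounded additive error; alternatively one uses $\rho(G_n) \leq \|G_n\|$ together with a matching lower bound $\rho(G_n) \geq c\,\langle v, G_n v\rangle$ valid under \ref{Condi-KestenH}. In all cases the discrepancy from $\log|G_n x_0|$ is $O(1)$ uniformly, so dividing by $n$ gives the second equality of \eqref{exp-sigma2-2}.

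The main obstacle — and the only non-routine point — is establishing the uniform boundedness of the remainder terms, i.e.\ that the relevant ratios $\|G_n\|/|G_n x_0|$, $\langle f, G_n x\rangle/|G_n x|$, and $\rho(G_n)/|G_n x_0|$ are controlled independently of $n$ in the appropriate sense. For the entry comparison and the spectral radius this is a genuinely \emph{deterministic, uniform-in-$\omega$} bound coming from \ref{Condi-KestenH} via Lemma \ref{lem equiv Kesten}, so it poses no integrability difficulty; the slightly more delicate case is $\|G_n\|$ versus $|G_n x_0|$ under only \ref{CondiAPH}, where the ratio need not be deterministically bounded and one must instead use the $\eta$-moment control from \ref{Condi-MomentH} (as packaged in Lemma \ref{Lem_Norm}) to get $\mathbb{E}[r_n^2] = O(1)$ — strictly speaking even an $L^2$ bound suffices, which follows since $r_n \leq \log N(g_n\cdots g_1)\cdot(\text{const})$-type estimates give $r_n$ bounded by a sum whose exponential moments are uniformly controlled. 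Once these $O(1)$ bounds are in hand, the Cauchy--Schwarz sandwiching is completely mechanical and the proposition follows.
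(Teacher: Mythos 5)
Your proposal is correct and takes essentially the same route as the paper: compare each quantity with $\log|G_n x_0|$ via Lemmas \ref{Lem_Norm} and \ref{lem equiv Kesten} plus the Collatz--Wielandt formula, and absorb the bounded remainder with an $L^2$ estimate (the paper phrases this via Minkowski's inequality rather than expanding the square and applying Cauchy--Schwarz, but the two computations are equivalent).

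One point in your closing paragraph deserves correction, since it mischaracterizes where the difficulty lies. Lemma \ref{Lem_Norm} gives a \emph{deterministic} bound: for $x_0\in(\mathbb{S}_+^{d-1})^\circ$ and any $g\in\Gamma_\mu$ (in particular $g=G_n(\omega)$ for every $\omega$ and every $n$) one has $\|g\|/|gx_0|\le 1/\tau(x_0)<\infty$, using only condition \ref{CondiAPH}(i) and no moment hypothesis. This immediately yields $\mathbb{E}[r_n^2]\le(\log\tau(x_0)^{-1})^2=O(1)$ uniformly in $n$. The alternative route you sketch, bounding $r_n$ by a multiple of $\log N(g_n\cdots g_1)$ and invoking exponential moments, would not in fact suffice: $\log N(G_n)$ grows linearly in $n$, so that bound would only give $\mathbb{E}[r_n^2]=O(n^2)$, destroying the $O(\sqrt{n})$ control you need on the cross term. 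Fortunately your main argument does not rely on that alternative, since you also (correctly) cite Lemma \ref{Lem_Norm} directly.
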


%Proposition \ref{Prop_Variance} is new to our knowledge. 
%Note that in the setting of products of 
We mention that for invertible matrices,  the expression \eqref{exp-sigma2-1} 
has been established in \cite[Proposition 14.7]{BQ16b}.  
For positive matrices, both  \eqref{exp-sigma2-1} and  \eqref{exp-sigma2-2} are new.

%%%%%%%%%%%%%%%%%%%%%%%%%%%%%%%%%%%%%%%%%%%%%%%%%%%%%%%%%%%%%%%%%%%%%%%%%%%%%%%
%%%%%%%%%%%%%%%%%%%%%%%%%%%%%%%%%%%%%%%%%%%%%%%%%%%%%%%%%%%%%%%%%%%%%%%%%%%%%%%
\section{Proofs of Berry-Esseen bounds}

The goal of this section is to establish
Theorems \ref{Thm-BerryE-Norm}, \ref{Thm-BerryE-Posi} and \ref{Thm-BerryE-Posi_Weak}. 

%%%%%%%%%%%%%%%%%%%%%%%%%%%%%%%%%%%%%%%%%%%%%%%%%%%%%%%%%%%%%%%%%%%%%%%%%%%%%%%
\subsection{Proof of Theorems \ref{Thm-BerryE-Norm} and \ref{Thm-BerryE-Posi}}

In order to prove Theorem \ref{Thm-BerryE-Norm}, we shall use the following result which was shown in 
\cite[Lemma 4.5]{BDGM14}. 
\begin{lemma} \label{Lem_Norm}
Under condition \ref{CondiAPH} (i), for any $x \in (\mathbb{S}_+^{d-1})^{\circ}$, we have 
\begin{align*}
\tau(x) := \inf_{g \in \Gamma_{\mu} }  \frac{|gx|}{ \|g\| } > 0. 
\end{align*}
Moreover, for any compact set $K \subset (\mathbb{S}_+^{d-1})^{\circ}$, 
it holds that $\inf_{x \in K} \tau(x) > 0$. 
\end{lemma}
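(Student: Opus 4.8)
The plan is to prove positivity of $\tau(x)$ for a fixed interior point $x$, and then upgrade to uniformity over compact subsets of $(\mathbb{S}_+^{d-1})^{\circ}$ by a compactness argument combined with the $1$-Lipschitz (or at least continuity) behaviour of the relevant quantities. The starting observation is that for every allowable $g \in \mathcal{M}_+$ and every $x \in \mathbb{S}_+^{d-1}$ one has $|gx| \leq \|g\|$, so $\tau(x) \leq 1$ and only the lower bound is at stake; moreover the quotient $|gx|/\|g\|$ is homogeneous of degree $0$ in $g$, so we may normalise and work with matrices of operator norm $1$.

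First I would establish the key pointwise estimate: for a fixed $x = (x_1,\dots,x_d) \in (\mathbb{S}_+^{d-1})^{\circ}$, put $\delta := \min_i x_i > 0$. For allowable $g$ with columns $c_1,\dots,c_d$ (each $c_k \geq 0$, and each nonzero by allowability of the rows... more precisely every column contains a positive entry), we have $gx = \sum_k x_k c_k \geq \delta \sum_k c_k$ entrywise, hence $|gx| \geq \delta \, |\sum_k c_k|$. On the other hand $\|g\| = \sup_{y \in \mathbb{S}_+^{d-1}} |gy| \leq \sup_{y} \sum_k y_k |c_k| \leq \sum_k |c_k| \leq d \max_k |c_k|$, while $|\sum_k c_k| \geq \max_k |c_k|$ since the $c_k$ are coordinatewise non-negative (the sum dominates each summand entrywise). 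Combining, $|gx|/\|g\| \geq \delta/d > 0$, uniformly over allowable $g$; taking the infimum over $g \in \Gamma_\mu$ (every element of which is allowable by condition \ref{CondiAPH}(i)) gives $\tau(x) \geq \delta/d > 0$. This already yields both assertions at once, since on a compact set $K \subset (\mathbb{S}_+^{d-1})^{\circ}$ the quantity $\delta(x) = \min_i x_i$ attains a strictly positive minimum $\delta_K$, whence $\inf_{x \in K} \tau(x) \geq \delta_K/d > 0$.

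I expect the only genuinely delicate point to be the comparison $|gx| \geq \delta |\sum_k c_k|$ together with $|\sum_k c_k| \geq \max_k |c_k|$ — these are where coordinatewise non-negativity of the columns is essential and where one must be careful that ``allowable'' indeed guarantees no column is the zero vector; everything else is bookkeeping with norm equivalences. An alternative, if one prefers to invoke the machinery already at hand, is to note that $|gx|/\|g\|$ compares $g \cdot (x/|x|)$-type directions and is controlled by the Hilbert metric distance from $x$ to the boundary; but the elementary estimate above is cleaner and self-contained, so I would present that. One should also remark that the constant obtained, $\delta/d$, is certainly not optimal, which is harmless since only positivity is needed in the sequel.
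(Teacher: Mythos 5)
Your argument is correct and self-contained. Note that the paper itself does not reprove this lemma: it simply cites \cite[Lemma~4.5]{BDGM14}, so strictly speaking your proposal is not ``a different proof from the paper'' but rather a proof the paper chose to import. Your elementary column-wise estimate is nonetheless a nice addition: with $c_1,\dots,c_d$ the columns of $g$ and $\delta=\min_i x_i>0$, the chain
\[
|gx| \;=\; \Bigl|\sum_k x_k c_k\Bigr| \;\geq\; \delta\,\Bigl|\sum_k c_k\Bigr| \;\geq\; \delta\,\max_k|c_k|,
\qquad
\|g\| \;\leq\; \sum_k|c_k| \;\leq\; d\,\max_k|c_k|,
\]
immediately gives $\tau(x)\geq\delta/d$, and the compactness statement follows since $x\mapsto\min_i x_i$ is continuous and strictly positive on $K$. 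All the inequalities hold because the columns are entrywise non-negative, so the Euclidean norm is monotone on them. Two small remarks: (i) the constant can be sharpened to $\delta/\sqrt{d}$ by using Cauchy--Schwarz instead of $y_k\leq 1$, though as you say this is irrelevant for the sequel; (ii) your argument in fact never uses allowability --- it only needs $g\in\mathcal{M}_+\setminus\{0\}$ so that $\max_k|c_k|>0$ --- so the lemma holds under a weaker hypothesis than stated, which is harmless here but worth being aware of (your passing mention of ``allowability of the rows'' is a slight misstatement; allowability requires both every row and every column to have a positive entry, but neither half is actually needed).
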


We  now proceed to prove Theorem \ref{Thm-BerryE-Norm} based on Lemma \ref{Lem_Norm}
and the Berry-Esseen bound for the norm cocycle $\log |G_n x|$ established in \cite{XGL19b}. 

\begin{proof}[Proof of Theorem \ref{Thm-BerryE-Norm}]
Without loss of generality, we assume that the target function $\varphi$ is non-negative. 
Under conditions of Theorem \ref{Thm-BerryE-Norm}, the following Berry-Esseen bound for the norm cocycle $\log |G_n x|$
with a target function $\varphi$ on the Markov chain $X_n^x$
has been recently established in \cite{XGL19b}: 
there exists a constant $C>0$ such that for all $n \geq 1$ and $\varphi \in \mathcal{B}_{\gamma}$, 
%It was proved in \cite{XGL19b} that under the conditions of Theorem \ref{Thm-BerryE-Norm}, we have
\begin{align}\label{Norm-BerryE-Posi}
\sup_{y \in \mathbb{R}} \sup_{x \in \mathbb{S}_+^{d-1} }
\left| \mathbb{E} \Big[ \varphi(X_n^x)
\mathbbm{1}_{ \big\{ \frac{\log | G_n x | - n \lambda }{ \sigma \sqrt{n} } \leq y \big\} }
   \Big]
 - \nu(\varphi) \Phi(y) \right| \leq \frac{C}{\sqrt{n}}  \| \varphi \|_{\gamma}.
\end{align}
On the one hand, using the fact that $\log |G_n x| \leq \log \| G_n \|$,
we deduce from \eqref{Norm-BerryE-Posi} that
there exists a constant $C>0$ such that for all 
$y \in \mathbb{R}$, $x \in \mathbb{S}_+^{d-1}$, $n \geq 1$ and $\varphi \in \mathcal{B}_{\gamma}$, 
\begin{align*}
\mathbb{E} \left[ \varphi(X_n^x)
\mathbbm{1}_{ \big\{ \frac{\log \| G_n \| - n \lambda }{ \sigma \sqrt{n} } \leq y \big\} }
\right]
\leq \nu(\varphi) \Phi(y) + \frac{C}{\sqrt{n}} \| \varphi \|_{\gamma}.
\end{align*}
On the other hand, by Lemma \ref{Lem_Norm}, 
we see that for any compact set $K \subset (\mathbb{S}_+^{d-1})^{\circ}$,
there exists a constant $C_1>0$ such that for all $n \geq 1$ and $x \in K$,
\begin{align} \label{Posi-NormLower01}
\log \| G_n \| \leq \log |G_n x| + C_1.
\end{align}
Combining this inequality with \eqref{Norm-BerryE-Posi}, we obtain that, with $y_1 = y - \frac{C_1}{\sigma \sqrt{n}}$,
uniformly in $y \in \mathbb{R}$, $x \in K$, $n \geq 1$ and $\varphi \in \mathcal{B}_{\gamma}$, 
\begin{align}\label{Pf_BE_Norm_vvv}
\mathbb{E} \left[ \varphi(X_n^x)
\mathbbm{1}_{ \big\{ \frac{\log \| G_n \| - n \lambda }{ \sigma \sqrt{n} } \leq y \big\} }
\right]
\geq \nu(\varphi) \Phi(y_1) - \frac{C}{\sqrt{n}}  \| \varphi \|_{\gamma}.
\end{align}
By elementary calculations, we find that there exists a constant $C_2 >0$ such that
for all $y \in \mathbb{R}$ and $n \geq 1$,
\begin{align}\label{BE_Calcu_kkk}
\Phi(y_1) - \Phi(y) = - \frac{1}{\sqrt{2\pi}} \int_{y - \frac{C_1}{\sigma \sqrt{n}}}^{y} e^{-\frac{t^2}{2}} dt 
\geq -\frac{C_2}{\sqrt{n}}.
\end{align}
%gives that $\Phi(y_1) - \Phi(y) \geq -\frac{C}{\sqrt{n}}$, uniformly in $y \in \mathbb{R}$ and $n \geq 1$.
This, together with \eqref{Pf_BE_Norm_vvv}, yields the desired lower bound.
The proof of Theorem \ref{Thm-BerryE-Norm} is complete.
\end{proof}

Now we proceed to prove Theorem \ref{Thm-BerryE-Posi}. 
% we shall apply the equivalent formulation of the condition \ref{Condi-KestenH}.
For any $0< \epsilon <1$, set
\begin{align*}
\mathbb{S}_{+, \epsilon}^{d-1}
= \left\{ x \in \mathbb{S}_+^{d-1}:
\langle x, e_j \rangle \geq \epsilon \ \mbox{for all $1\leq j\leq d$}  \right\}.
\end{align*}
The following result provides an equivalent formulation of condition \ref{Condi-KestenH},
which will be used to prove Theorems \ref{Thm-BerryE-Posi} and \ref{Thm-Cram-Posi-tag}. 
%and we refer to \cite{XGL19c} for the proof. 
For any matrix $g\in \supp \mu$, 
we denote $g \cdot \mathbb{S}_{+}^{d-1} = \big\{ g \cdot x: x \in \mathbb{S}_{+}^{d-1} \big\}$. 

\begin{lemma} \label{lem equiv Kesten}
Condition \ref{Condi-KestenH} is equivalent to the following statement:
there exists a constant $\epsilon \in (0,\frac{\sqrt{2}}{2})$ such that
\begin{align} \label{equicon A4}
g \cdot \mathbb{S}_{+}^{d-1} \subset \mathbb{S}_{+, \epsilon}^{d-1}, 
\quad \mbox{for any} \ g\in \supp \mu.
\end{align}
\end{lemma}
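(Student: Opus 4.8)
The plan is to prove the equivalence in two directions, translating the column-comparability bound of condition~\ref{Condi-KestenH} into a geometric statement about where the projective action sends the positive sphere. The key observation is that for any $x \in \mathbb{S}_+^{d-1}$ and any allowable $g \in \supp\mu$, the $i$-th component of $gx$ is $\langle e_i, gx\rangle = \sum_{k} g^{i,k} x_k$, so that
\begin{align*}
\frac{\langle e_i, gx\rangle}{\langle e_{i'}, gx\rangle}
= \frac{\sum_k g^{i,k} x_k}{\sum_k g^{i',k} x_k}.
\end{align*}
If condition~\ref{Condi-KestenH} holds, then for each column index $k$ we have $g^{i,k} \leq C g^{i',k}$, hence $\sum_k g^{i,k} x_k \leq C \sum_k g^{i',k} x_k$ for every $x \geq 0$, so every component of $gx$ lies within a factor $C$ of every other. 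Consequently $\langle e_i, g\cdot x\rangle = \langle e_i, gx\rangle / |gx| \geq (C\sqrt{d})^{-1}$ for all $i$ (since the largest component is at least $1/\sqrt d$, being the Euclidean normalization, and every other is at least $1/C$ of it). Taking $\epsilon$ to be this constant, possibly shrunk so that $\epsilon < \tfrac{\sqrt2}{2}$, gives \eqref{equicon A4}.

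For the converse, I would test \eqref{equicon A4} on the basis vectors: applying the inclusion with $x = e_k$ gives $g \cdot e_k \in \mathbb{S}_{+,\epsilon}^{d-1}$, i.e. $\langle e_i, g e_k\rangle \geq \epsilon |g e_k|$ for all $i$. Since $g e_k$ is the $k$-th column of $g$, this says $g^{i,k} \geq \epsilon |g^{\bullet,k}| \geq \epsilon \max_{i'} g^{i',k}$ for every $i$, where $g^{\bullet,k}$ denotes that column. Hence $\max_i g^{i,k} / \min_i g^{i,k} \leq 1/\epsilon$, which is exactly condition~\ref{Condi-KestenH} with $C = 1/\epsilon$ (and the lower bound $1$ in \ref{Condi-KestenH} is automatic). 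One should note that allowability — guaranteed here since \ref{Condi-KestenH} implies \ref{CondiAPH}, and in the converse direction one checks $g$ is allowable directly from \eqref{equicon A4} applied to $e_k$ forcing every column nonzero, and from the hypothesis that $\supp\mu$ consists of matrices for which $g\cdot x$ makes sense — ensures all the quantities $|gx|$ and $|ge_k|$ are strictly positive, so no division by zero occurs.

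The only mild subtlety, and the step I would be most careful about, is bookkeeping the Euclidean normalization factors: the metric on $\mathbb{S}_{+,\epsilon}^{d-1}$ is stated in terms of the normalized vector $g\cdot x = gx/|gx|$, so one must pass between the ratio bound on raw components $g^{i,k} x_k$ and the bound $\langle e_i, g\cdot x\rangle \geq \epsilon$, using only that for a unit vector in the positive orthant the maximal component is at least $1/\sqrt d$ and at most $1$. This introduces the harmless dimensional constants $\sqrt d$ in one direction, which is why the equivalence is between \ref{Condi-KestenH} "with some constant $C$" and \eqref{equicon A4} "with some constant $\epsilon$" rather than a sharp numerical correspondence; the requirement $\epsilon < \tfrac{\sqrt2}{2}$ in the statement is merely a normalization that can always be met by decreasing $\epsilon$, since shrinking $\epsilon$ only weakens \eqref{equicon A4}. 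I would close by remarking that this equivalent form makes transparent the semigroup property mentioned after condition~\ref{Condi-KestenH}: if \eqref{equicon A4} holds for $g_1$ and $g_2$ then $g_2 g_1 \cdot \mathbb{S}_+^{d-1} = g_2 \cdot (g_1 \cdot \mathbb{S}_+^{d-1}) \subset g_2 \cdot \mathbb{S}_+^{d-1} \subset \mathbb{S}_{+,\epsilon}^{d-1}$.
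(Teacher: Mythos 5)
Your proposal is correct and follows essentially the same approach as the paper: both directions hinge on testing \eqref{equicon A4} against the basis vectors $e_k$, and both show that condition \ref{Condi-KestenH} forces the components of $gx$ to lie within a uniform multiplicative factor of one another before normalizing. The only difference is arithmetic bookkeeping — you deduce $\epsilon = (C\sqrt{d})^{-1}$ via the observation that a unit vector's largest component is at least $1/\sqrt{d}$, whereas the paper routes through a Cauchy--Schwarz estimate to reach $\epsilon = (Cd)^{-1}$; the specific constants are immaterial to the equivalence, and your bound is in fact marginally sharper.
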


\begin{proof} 
We first show that the assertion \eqref{equicon A4} implies condition \ref{Condi-KestenH}.
For any matrix $g = ( g^{i,j} )_{ 1\leq i, j \leq d } \in \supp \mu$, 
we see that for any $1\leq i, j  \leq d$, 
\begin{align} \label{equilem 01}
\langle e_i, g \cdot e_j \rangle
= \frac{g^{i,j}}{ \sqrt{\sum_{i=1}^d (g^{i,j})^2}}.
\end{align}
Using \eqref{equicon A4} and the definition of $\mathbb{S}_{+,\epsilon}^{d-1}$, 
we get that there exists $\epsilon \in (0,\frac{\sqrt{2}}{2})$ such that 
$\langle e_i, g\!\cdot\! e_j \rangle \geq \epsilon$ for all $1\leq i,j \leq d$. 
This implies condition \ref{Condi-KestenH} with $C = \sqrt{ \frac{1}{d-1} ( \frac{1}{\epsilon^2} -1 ) } $
by taking maxima and minima by rows in \eqref{equilem 01}. 

We next prove that condition \ref{Condi-KestenH} implies the assertion \eqref{equicon A4}. 
For any $x \in \mathbb{S}_+^{d-1}$, we write $x=\sum_{j=1}^d x_j e_j$, 
where $x_j\geq 0$ satisfies $\sum_{j=1}^d x_j^2 =1$. 
It is easy to see that $\sum_{j=1}^d x_j \geq 1$.  
For any $1 \leq i \leq d$, it holds that
\begin{align*}
\langle e_i, g\!\cdot\! x \rangle 
= \frac{1}{|gx|} \sum_{j=1}^d x_j \langle e_i, g e_j \rangle  
=  \frac{ \sum_{j=1}^d x_j g^{i,j}}{ \sqrt{ \sum_{i=1}^d ( \sum_{j=1}^d g^{i,j} x_j )^2 } }.
\end{align*}
Since $\sum_{j=1}^d x_j^2 =1$, we get $(\sum_{j=1}^d g^{i,j}x_j )^2 \leq \sum_{j=1}^d (g^{i,j})^2$
using the Cauchy-Schwarz inequality.
Combining this with condition \ref{Condi-KestenH} and the fact that $\sum_{j=1}^d x_j \geq 1$,
we obtain $\langle e_i, g\!\cdot\! x \rangle \geq \sum_{j=1}^d \frac{x_j }{\sqrt{ C^2 d^2} }
\geq \frac{1}{Cd}$,
 so that the assertion \eqref{equicon A4} holds with $\epsilon =  \frac{1}{Cd}.$%The proof is complete. 
\end{proof}

Using Lemma \ref{lem equiv Kesten}, 
Theorem \ref{Thm-BerryE-Norm} and the Berry-Esseen bound \eqref{Norm-BerryE-Posi}, 
we are in a position to prove Theorem \ref{Thm-BerryE-Posi}.

\begin{proof}[Proof of Theorem \ref{Thm-BerryE-Posi}]
%We only give a proof of \eqref{BerryE-Posi02} since \eqref{BerryE-Posi02} implies \eqref{BerryE-Posi01}
%by taking $\varphi=1$.
Without loss of generality, we assume that the target function $\varphi$ is non-negative.

We first prove the Berry-Esseen bound \eqref{BerryE01} for the scalar product $ \langle f, G_n x \rangle$. 
%It was proved in \cite{XGL19b} that under the conditions of Theorem \ref{Thm-BerryE-Posi}, we have
%\begin{align}\label{Norm-BerryE-Posi}
%\sup_{y \in \mathbb{R}} \sup_{x \in \mathbb{S}_+^{d-1} }
%\Big| \mathbb{E} \Big[ \varphi(X_n^x)
%\mathbbm{1}_{ \big\{ \frac{\log | G_n x | - n \lambda }{ \sigma \sqrt{n} } \leq y \big\} }
%   \Big]
% - \nu(\varphi) \Phi(y) \Big| \leq \frac{C}{\sqrt{n}}.
%\end{align}
On the one hand, using the fact that $\log \langle f, G_n x \rangle \leq \log |G_n x|$,
we deduce from the Berry-Esseen bound \eqref{Norm-BerryE-Posi} that
there exists a constant $C>0$ such that for all
$y \in \mathbb{R}$, $f, x \in \mathbb{S}_+^{d-1}$, $n \geq 1$ and $\varphi \in \mathcal{B}_{\gamma}$,
\begin{align}\label{BE_Low_fff}
\mathbb{E} \left[ \varphi(X_n^x)
\mathbbm{1}_{ \big\{ \frac{\log \langle f, G_n x \rangle - n \lambda }{ \sigma \sqrt{n} } \leq y \big\} }
\right]
\geq \nu(\varphi) \Phi(y) -\frac{C}{\sqrt{n}} \| \varphi \|_{\gamma}.
\end{align}
On the other hand, note that $\log \langle f, G_n x \rangle = \log |G_n x| + \log \langle f, X_n^x \rangle$. 
By Lemma \ref{lem equiv Kesten}, we see that
there exists a constant $C_1>0$ such that for all $f, x \in \mathbb{S}_+^{d-1}$ and $n \geq 1$,
\begin{align} \label{Posi-ScalLower01}
\log | G_n x| \leq \log \langle f, G_n x \rangle + C_1.
\end{align}
Using this inequality and again the Berry-Esseen bound \eqref{Norm-BerryE-Posi}, 
we obtain that, with $y_1 = y + \frac{C_1}{\sigma \sqrt{n}}$, uniformly in 
$y \in \mathbb{R}$, $f, x \in \mathbb{S}_+^{d-1}$, $n \geq 1$ and $\varphi \in \mathcal{B}_{\gamma}$,
\begin{align*}
\mathbb{E} \left[ \varphi(X_n^x)
\mathbbm{1}_{ \big\{ \frac{\log \langle f, G_n x \rangle - n \lambda }{ \sigma \sqrt{n} } \leq y \big\} }
\right]
\leq \nu(\varphi) \Phi(y_1) + \frac{C}{\sqrt{n}}  \| \varphi \|_{\gamma}.
\end{align*}
It is easy to show that $\Phi(y_1) - \Phi(y) \leq \frac{C}{\sqrt{n}}$, uniformly in $y \in \mathbb{R}$.
Together with the above inequality, this leads to the desired upper bound
and ends the proof of the Berry-Esseen bound \eqref{BerryE01}.
%The proof of Theorem \ref{Thm-BerryE-Posi} is complete.

We next prove the bound \eqref{BerryE02} for the spectral radius $\rho(G_n)$. 
Since $\rho(G_n) \leq \| G_n \|$, by Theorem \ref{Thm-BerryE-Norm}, 
we get the following lower bound: 
there exists a constant $C>0$ such that for all 
$y \in \mathbb{R}$, $x \in K$, $n \geq 1$ and $\varphi \in \mathcal{B}_{\gamma}$,
\begin{align*}
\mathbb{E} \left[ \varphi(X_n^x)
\mathbbm{1}_{ \big\{ \frac{\log \rho(G_n) - n \lambda }{ \sigma \sqrt{n} } \leq y \big\} }
\right]
\geq \nu(\varphi) \Phi(y) -\frac{C}{\sqrt{n}} \| \varphi \|_{\gamma}.
\end{align*}
The upper bound is carried out by using the Collatz-Wielandt formula 
in conjugation with the Berry-Esseen bound \eqref{BerryE01} for the entries $G_n^{i,i}$. 
Denote by $\mathcal{C}_+ = \{ x \in \mathbb{R}^d: x \geq 0 \} \setminus \{0\}$ 
 the positive quadrant  in $\mathbb{R}^d$ except the origin. 
According to the Collatz-Wielandt formula, 
the spectral radius of the positive matrix $G_n$ can be represented as follows:
%for any $g \in \mathcal{G}_+$, 
\begin{align}\label{Formu_ColWie}
\rho( G_n ) = \sup_{ x \in \mathcal{C}_+ } 
   \min_{ 1 \leq i \leq d, \langle  e_i, x \rangle > 0 } \frac{ \langle e_i, G_n x \rangle }{ \langle e_i, x \rangle}. 
\end{align}
It follows that there exists a constant $\epsilon \in (0,\frac{\sqrt{2}}{2})$ 
such that for all $x \in \mathbb{S}_+^{d-1}$,
\begin{align}\label{Ine_Spectral01}
\rho( G_n ) \geq \min_{ 1 \leq i \leq d }  \langle e_i, G_n x \rangle 
\geq   \min_{ 1 \leq i \leq d }  \langle e_i, X_n^x \rangle |G_n x|
\geq  \epsilon |G_n x|,
\end{align}
where in the last inequality we used Lemma \ref{lem equiv Kesten}. 
%Taking $x = e_i$ in \eqref{Formu_ColWie}, it follows that $\rho(G_n) \geq  G_n^{i,i}$ for any $1 \leq i \leq d$. 
Using the bound \eqref{Norm-BerryE-Posi} and the inequality \eqref{Ine_Spectral01}, we deduce that 
there exists a constant $C>0$ such that 
for all $x \in \mathbb{S}_+^{d-1}$, $y \in \mathbb{R}$, $n \geq 1$ and $\varphi \in \mathcal{B}_{\gamma}$,
\begin{align*}
\mathbb{E} \left[ \varphi(X_n^{x})
\mathbbm{1}_{ \big\{ \frac{\log \rho(G_n) - n \lambda }{ \sigma \sqrt{n} } \leq y \big\} }
\right]
\leq \nu(\varphi) \Phi(y) + \frac{C}{\sqrt{n}} \| \varphi \|_{\gamma}.
\end{align*}
This ends the proof of the bound \eqref{BerryE02} for the spectral radius $\rho(G_n)$. 
\end{proof}

%%%%%%%%%%%%%%%%%%%%%%%%%%%%%%%%%%%%%%%%%%%%%%%%%%%%%%%%%%%%%%%%%%%%%%%%%%%%%%%%%%%%%%%%%
\subsection{H\"{o}lder regularity of stationary measures}\label{subsec_HolderRegu}

In this section we present our results on the H\"{o}lder regularity 
of the stationary measure $\pi_s$ and of the eigenmeasure $\nu_s$.
The regularity of $\pi_s$ and $\nu_s$  is central to establishing 
Berry-Esseen type bounds and moderate deviation asymptotics for the entries $G_n^{i,j}$
and is also of independent interest. 
%and more generally for the scalar product $\langle f, G_n x \rangle$. 
Hereafter, we denote $$I_{\mu} = \{ s \geq 0: \bb E(\|g_1\|^s) < \infty \}.$$ 
By H\"{o}lder's inequality, it is easy to see that $I_{\mu}$ is an interval on $\bb R$. 
The interior of $I_{\mu}$ is denoted by $I_{\mu}^{\circ}$. 
For any $s\in I_{\mu}$, define the transfer operator $P_s$ 
as in \eqref{Def_Trans_Oper_01}: for any $\varphi \in \mathcal{B}_{\gamma}$, 
\begin{align}\label{transfoper001}
P_{s}\varphi(x) = \int_{\Gamma_{\mu}}  |g x|^s \varphi( g \cdot x ) \mu(dg), 
\quad  x\in \bb S_+^{d-1}.  
\end{align}
It is proved in \cite{BDGM14} that the operator $P_s$ has unique continuous strictly positive eigenfunction $r_s$
on $\mathbb{P}^{d-1}$ and unique probability eigenmeasure $\nu_s$
satisfying 
\begin{align*}
P_s r_s = \kappa(s) r_s  \quad \mbox{and} \quad  P_s \nu_s = \kappa(s) \nu_s. 
\end{align*}
The family of probability kernels  
$q_{n}^{s}(x,g) = \frac{ |gx|^s }{\kappa^{n}(s)} \frac{r_{s}(g \cdot x)}{r_{s}(x)},$
$n\geq 1$, satisfies the cocycle property. 
Hence the probability measures 
$q_{n}^{s}(x,g_{n}\dots g_{1})\mu(dg_1)\dots\mu(dg_n)$
form a projective system on $\mathcal M_+^{\bb N^*}$, 
so that there exists a unique probability measure  $\mathbb Q_s^x$ on $\mathcal M_+^{\bb N^*}$, 
by the Kolmogorov extension theorem.  
%with marginals $\mathbb Q_{s,n}^x$. 
The corresponding expectation is denote by $\mathbb{E}_{\mathbb Q_s^x}$.
For any measurable function $\varphi$ on $(\bb S_+^{d-1} \times \mathbb R)^{n}$, 
it holds that 
\begin{align}\label{basic equ1}
 \frac{1}{ \kappa^{n}(s) r_{s}(x) }
\mathbb{E} \Big[  r_{s}(X_{n}^{x}) & |G_nx|^{s}  \varphi \big( X_{1}^{x}, \log |G_1 x|,\dots, X_{n}^{x}, \log |G_n x| 
                    \big) \Big]   \nonumber\\
&\quad 
=\mathbb{E}_{\mathbb{Q}_{s}^{x}} \Big[ \varphi \big( X_{1}^{x}, \log |G_1 x|,\dots, X_{n}^{x}, \log |G_n x|  \big) \Big].
\end{align}
Under the changed measure  $\mathbb Q_s^x$, the Markov chain $(X_n^x)_{n \geq 0}$ has 
a unique stationary measure $\pi_s$ given by $\pi_s(\varphi) = \frac{\nu_{s}(\varphi r_{s})}{\nu_{s}(r_{s})}$, 
for any function $\varphi \in \mathcal{C}(\bb S_+^{d-1})$.

\begin{proposition}\label{PropRegularity}
%Let $s \in I_{\mu}^{\circ}$. 
Assume either condition \ref{Condi-KestenH} 
or conditions \ref{Condi-MomentH}, \ref{Condi-WeakKest}, \ref{Condi-NonArith}. 
Then, for any $s \in \{0\} \cup I_{\mu}^{\circ}$, there exists a constant $\alpha > 0$ such that 
\begin{align} \label{RegularityIne00}
\sup_{f \in \bb S_+^{d-1} } \int_{ \bb S_+^{d-1} }  \frac{1}{|\langle f,x \rangle|^{\alpha}} \nu_s(dx) < +\infty. 
\end{align}
In particular, for any $s \in \{0\} \cup I_{\mu}^{\circ}$,
there exist constants $\alpha, C >0$ such that for any $0< t < 1$, 
\begin{align} \label{RegularityIne}
\sup_{f \in \bb S_+^{d-1} }  \nu_s \left( \left\{x \in \bb S_+^{d-1}:  |\langle f, x \rangle| \leq t \right\}  \right)  
\leq C t^{\alpha}. 
\end{align}
Moreover, the assertions \eqref{RegularityIne00} and \eqref{RegularityIne} remain valid
when the eigenmeasure $\nu_s$ is replaced by the stationary measure $\pi_s$. 
%\begin{align} \label{RegularityIne00}
%\sup_{f \in \bb S_+^{d-1} } \int_{ \bb S_+^{d-1} } \frac{1}{|\langle f,x \rangle|^{\alpha}} \pi_s(dx) < \infty 
%\quad  \mbox{and}  \quad  
%\sup_{f \in \bb S_+^{d-1} } \int_{ \bb S_+^{d-1} }  \frac{1}{|\langle f,x \rangle|^{\alpha}} \nu_s(dx) < \infty. 
%\end{align}
%In particular, for any $s \in I_{\mu}^{\circ}$,
%there exist constants $\alpha, C >0$ such that for any $0< t < 1$, 
%\begin{align} \label{RegularityIne}
%  \sup_{f \in \bb S_+^{d-1} }  \pi_s \big(\{x:  |\langle f, x \rangle| \leq t \}  \big)  \leq C t^{\alpha}
%\  \mbox{and}  \ 
%\sup_{f \in \bb S_+^{d-1} }  \nu_s \big(\{x:  |\langle f, x \rangle| \leq t \}  \big)  \leq C t^{\alpha}. 
%\end{align}
\end{proposition}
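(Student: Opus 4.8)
\textbf{Proof proposal for Proposition \ref{PropRegularity}.}

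The plan is to prove the single estimate \eqref{RegularityIne00}, since \eqref{RegularityIne} follows from it by a trivial Markov--Chebyshev argument: on the set $\{|\langle f,x\rangle|\le t\}$ one has $|\langle f,x\rangle|^{-\alpha}\ge t^{-\alpha}$, so $\nu_s(\{|\langle f,x\rangle|\le t\})\le t^{\alpha}\int |\langle f,x\rangle|^{-\alpha}\nu_s(dx)$, which is bounded uniformly in $f$ by \eqref{RegularityIne00}. The passage from $\nu_s$ to $\pi_s$ is also immediate: since $r_s$ is continuous and strictly positive on the compact space $\mathbb S_+^{d-1}$, it is bounded above and below by positive constants, and $\pi_s(\varphi)=\nu_s(\varphi r_s)/\nu_s(r_s)$, so finiteness of $\int|\langle f,x\rangle|^{-\alpha}\nu_s(dx)$ is equivalent to finiteness of $\int|\langle f,x\rangle|^{-\alpha}\pi_s(dx)$, with uniform control in $f$. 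So everything reduces to \eqref{RegularityIne00} for $\nu_s$.

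For the core estimate I would split into the two hypothesis regimes. \emph{Case 1: condition \ref{Condi-KestenH} holds.} Here Lemma \ref{lem equiv Kesten} gives an $\epsilon\in(0,\tfrac{\sqrt2}{2})$ with $g\cdot\mathbb S_+^{d-1}\subset\mathbb S_{+,\epsilon}^{d-1}$ for every $g\in\supp\mu$, hence $\Gamma_\mu\cap\mathcal M_+^\circ\cdot\mathbb S_+^{d-1}\subset\mathbb S_{+,\epsilon}^{d-1}$ by the semigroup property, and therefore $\supp\nu_s\subseteq\supp\nu=\overline{\{v_g:g\in\Gamma_\mu\cap\mathcal M_+^\circ\}}\subset\mathbb S_{+,\epsilon}^{d-1}$. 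For $x\in\mathbb S_{+,\epsilon}^{d-1}$ and any $f\in\mathbb S_+^{d-1}$ one has $\langle f,x\rangle\ge\sum_{j}f_j\langle e_j,x\rangle\ge\epsilon\sum_j f_j\ge\epsilon$, so $|\langle f,x\rangle|^{-\alpha}\le\epsilon^{-\alpha}$ is simply bounded on $\supp\nu_s$; this case needs only \ref{Condi-KestenH} and no moment hypothesis. \emph{Case 2: conditions \ref{Condi-MomentH}, \ref{Condi-WeakKest}, \ref{Condi-NonArith}.} This is the substantive case. The idea is to exploit the invariance $\nu_s=\tfrac1{\kappa(s)}P_s^*\nu_s$, i.e. $\nu_s(\varphi)=\tfrac1{\kappa^n(s)}\int\int |gx|^s\varphi(g\cdot x)\,\mu^{*n}(dg)\,\nu_s(dx)$, to "smear out" mass near the hyperplane $\{\langle f,\cdot\rangle=0\}$: the point is that for $g$ strictly positive, $\langle f,g\cdot x\rangle=\langle {}^tg f,x\rangle/|gx|$ is bounded below in terms of how far ${}^tg f/|{}^tg f|$ is from the boundary, and iterating the action of $G_n$ contracts toward the interior. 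Concretely I would fix a dyadic scale, write $\nu_s(\{|\langle f,x\rangle|\le 2^{-k}\})$, use \eqref{basic equ1} / the change of measure $\mathbb Q_s^x$ to rewrite this probability as $\mathbb E_{\mathbb Q_s^x}$ of an indicator on $X_n^x$, and then bound $\mathbb Q_s^x(|\langle f,X_n^x\rangle|\le 2^{-k})$ by combining (a) the large deviation bounds for $\log|G_n x|$ under $\mathbb Q_s^x$ from Theorem \ref{Thm_BRLD_changedMea}, which control $|G_n x|$ and hence relate $\langle f,G_n x\rangle$ to $\langle e_i,G_n x\rangle=G_n^{i,j}$-type entries, with (b) the harmonic moment bound \ref{Condi-WeakKest}, $\mathbb E[(g_1^{i,j})^{-\delta}]<\infty$, which prevents the entries of a single step from being too small, and (c) a one-step-regularisation: under $\mathbb Q_s^x$ the first matrix $g_1$ has a density-like weight and its transpose maps $f$ away from the boundary with a quantitative tail, so $\langle f,g_1\cdot x\rangle$ has a negative-power moment. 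Choosing $\alpha$ smaller than the exponent produced by this argument and $n=n(k)$ of order $k$ (so that the large-deviation bound beats the geometric factor $2^{-k\alpha}$), summing the geometric series over $k$ gives $\int|\langle f,x\rangle|^{-\alpha}\nu_s(dx)=\sum_k\int_{2^{-k-1}<|\langle f,x\rangle|\le 2^{-k}}\le\sum_k 2^{(k+1)\alpha}\nu_s(\{|\langle f,x\rangle|\le 2^{-k}\})<\infty$, uniformly in $f$ because all the inputs are uniform in $f$ and $x$.

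The main obstacle is Case 2, and within it the uniformity in $f\in\mathbb S_+^{d-1}$ together with the interplay between the changed measure $\mathbb Q_s^x$ and the negative-power integrability. The delicate point is that $\mathbb Q_s^x$ reweights by $|G_n x|^s r_s(X_n^x)$, so one must check that the large-deviation estimates of Theorem \ref{Thm_BRLD_changedMea} are genuinely available \emph{under} $\mathbb Q_s^x$ (not just under $\mathbb P$) with constants independent of $x$, and that the non-arithmeticity \ref{Condi-NonArith} is what makes those estimates hold with the right rate — this is precisely why \ref{Condi-NonArith} appears in the hypotheses. A secondary technical nuisance is that, unlike in Case 1, $\supp\nu_s$ may touch the boundary $\partial\mathbb S_+^{d-1}$ (the Markov chain is "no longer separated from the coordinates $e_i$", as noted in the introduction), so one genuinely needs the quantitative anti-concentration near the hyperplanes rather than a crude support argument; the harmonic moment condition \ref{Condi-WeakKest} is the exact hypothesis that supplies this at the level of a single step, and one must verify it propagates through the $n$-fold product in the right quantitative form. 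Once these two ingredients are combined, the dyadic summation is routine.
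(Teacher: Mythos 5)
Your proposal is correct and follows essentially the same route as the paper: the reduction to $\pi_s$ via boundedness of $r_s$, the support argument under condition \ref{Condi-KestenH} using Lemma \ref{lem equiv Kesten} and $\supp\nu_s=\supp\nu\subset\mathbb{S}_{+,\epsilon}^{d-1}$, and, in the substantive case, a uniform bound on $\mathbb{Q}_s^x(\langle f,X_n^x\rangle\leq e^{-Ck})$ combining Theorem \ref{Thm_BRLD_changedMea} with a Markov/H\"{o}lder argument based on \ref{Condi-WeakKest}, transferred to $\pi_s$ through the stationarity $\pi_s=(\mathbb{Q}_s^x)^{*n}*\pi_s$ and summed over dyadic shells. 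You are slightly vaguer than the paper on how the $\pi_s$-measure of the slab is rewritten as a $\mathbb{Q}_s^x$-chain probability and on the propagation of the one-step harmonic moment to $G_n$ (the paper uses $\min_{i,j}\langle e_i,G_n e_j\rangle\geq\prod_{m}\min_{i,j}\langle e_i,g_m e_j\rangle$, independence, and H\"{o}lder's inequality), but you correctly flag these as the technical points to be verified rather than proposing a different mechanism.
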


Under condition \ref{Condi-KestenH},
the proof of the assertion \eqref{RegularityIne00} relies on the fact that 
$\supp \nu = \supp \nu_s$ ($s > 0$) established in \cite{BDGM14}
and essentially on condition \ref{Condi-KestenH} which ensures that the Markov chain $(X_n^x)_{n \geq 0}$
stays forever in the interior of the projective space $\mathbb{S}_+^{d-1}$: see Lemma \ref{lem equiv Kesten}. 
If condition \ref{Condi-KestenH} is replaced by \ref{Condi-WeakKest}, 
the main difficulty to prove \eqref{RegularityIne00} is that the Markov chain $(X_n^x)_{n \geq 0}$
is no longer separated from the coordinates $(e_k)_{1 \leq k \leq d}$, hence the proof can not follow directly from
the fact that $\supp \nu = \supp \nu_s$.  
Instead, the main ingredient in our proof consists in 
 the large deviation asymptotic for the norm cocycle $\log |G_n x|$ under the changed measure
 $\mathbb{Q}_s^x$ established in Theorem \ref{Thm_BRLD_changedMea}. 
 
It is worth mentioning that in the case of invertible matrices, 
the corresponding result with $s =0$ (in this case also $\pi_0=\nu_0=\nu$)
has been obtained in \cite{GR85};     
we also refer to \cite{BL85} for the detailed description of the method used in  \cite{GR85}
and to \cite{BFLM11, BQ16b} for a different approach of the proof. 

Before proving Proposition \ref{PropRegularity}, 
let us give the precise large deviation result for the norm cocycle $\log |G_n x|$ 
under the changed measure $\mathbb Q_s^x$.
It is deduced from \cite[Theorem 2.2]{XGL19a} and will be used in the proof of regularity 
of the stationary measure $\pi_s$ (see Proposition \ref{PropRegularity}). 
As in \eqref{Def-CramSeri}, we denote $\Lambda = \log \kappa$ and
by $\Lambda^*$ the Legendre transform of $\Lambda$. 
In particular, we have $\Lambda^*(q_s) = s q_s - \Lambda(s)$ if $q_s = \Lambda'(s)$. 

\begin{theorem}  \label{Thm_BRLD_changedMea}
Assume conditions \ref{Condi-MomentH}, \ref{CondiAPH} and \ref{Condi-NonArith}.  
Let $s \in I_{\mu}$, $t \in I^{\circ}_{\mu}$ 
be such that $s<t$ and set  $q_s = \Lambda'(s)$ and $q_t = \Lambda'(t)$. 
Then, for any positive sequence $(l_n)_{n \geq 1}$ satisfying $\lim_{n \to \infty} l_n = 0$,
we have, as $n \to \infty$, uniformly in $|l|\leq l_n$ and $x \in \bb S_+^{d-1}$, 
\begin{align*} 
&  \mathbb Q_s^x  \big( \log|G_n x|  \geq n( q_t+l ) \big)   \nonumber\\
&   =  \frac{ \nu_t(r_s) }{ \nu_t(r_t) } \frac{ r_{t}(x) }{ r_{s}(x) }  
   \frac{ \exp  \{  -n(\Lambda^*(q_t + l) - \Lambda^*(q_s) - s(q_t -q_s +l)) \} }
 {(t -s)\sigma_{t} \sqrt{2\pi n}} [ 1 + o(1)].
\end{align*}
\end{theorem}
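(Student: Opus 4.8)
The plan is to reduce the assertion to a sharp (Bahadur--Rao/Petrov type) large deviation expansion for the norm cocycle $\log|G_nx|$ under the original law $\mathbb P=\mathbb Q_0^x$ --- which is the content of \cite[Theorem 2.2]{XGL19a} --- and then to read off the constants by Legendre duality. First I would use the change-of-measure identity \eqref{basic equ1}: choosing there a functional depending on the trajectory only through $\mathbbm{1}_{\{\log|G_nx|\ge n(q_t+l)\}}$, and recalling $\kappa(s)=e^{\Lambda(s)}$, one obtains
\begin{align*}
\mathbb Q_s^x\bigl(\log|G_nx|\ge n(q_t+l)\bigr)
&=\frac{1}{e^{n\Lambda(s)}\,r_s(x)}\,
  \mathbb E\!\left[r_s(X_n^x)\,|G_nx|^{s}\,\mathbbm{1}_{\{\log|G_nx|\ge n(q_t+l)\}}\right].
\end{align*}
So it suffices to find the asymptotics, uniform in $|l|\le l_n$ and $x\in\mathbb S_+^{d-1}$, of the expectation on the right; note that $r_s\in\mathcal B_\gamma$ is bounded away from $0$ and $\infty$, hence an admissible H\"older target function.

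Since $t\in I_\mu^\circ$, the function $\Lambda$ is analytic and strictly convex near $t$, so for $|l|\le l_n$ with $n$ large one may write $q_t+l=\Lambda'(\theta_l)$ with $\theta_l=t+O(l)\in I_\mu^\circ$ and $(\Lambda^*)'(q_t+l)=\theta_l$. I would then apply \cite[Theorem 2.2]{XGL19a} to the norm cocycle with the target function $r_s$. Three effects combine, uniformly in $|l|\le l_n$ and $x$: the classical Bahadur--Rao prefactor $r_{\theta_l}(x)\big/\bigl(\theta_l\,\sigma_{\theta_l}\sqrt{2\pi n}\,\nu_{\theta_l}(r_{\theta_l})\bigr)$ together with the rate $e^{-n\Lambda^*(q_t+l)}$; the convergence of the conditional law of $X_n^x$ on $\{\log|G_nx|\approx n(q_t+l)\}$ to the eigenmeasure $\nu_{\theta_l}$, which averages the target to the constant $\nu_{\theta_l}(r_s)$; and the exponential weight $|G_nx|^{s}=e^{sn(q_t+l)}e^{s(\log|G_nx|-n(q_t+l))}$, which is absorbed by noting that, under the non-arithmeticity condition \ref{Condi-NonArith}, the overshoot $\log|G_nx|-n(q_t+l)$ is asymptotically $\mathrm{Exp}(\theta_l)$ given $\{\log|G_nx|\ge n(q_t+l)\}$, so the weight contributes $e^{sn(q_t+l)}\,\theta_l/(\theta_l-s)$, the numerator $\theta_l$ cancelling the one in the prefactor. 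Altogether,
\begin{align*}
&\mathbb E\!\left[r_s(X_n^x)\,|G_nx|^{s}\,\mathbbm{1}_{\{\log|G_nx|\ge n(q_t+l)\}}\right]\\
&\qquad=\frac{r_{\theta_l}(x)\,\nu_{\theta_l}(r_s)}
         {(\theta_l-s)\,\sigma_{\theta_l}\sqrt{2\pi n}\,\nu_{\theta_l}(r_{\theta_l})}\;
   e^{\,sn(q_t+l)-n\Lambda^*(q_t+l)}\,\bigl[1+o(1)\bigr].
\end{align*}

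It then remains to let $l\to0$ and to collect the exponentials. By the analytic, hence continuous, dependence of $\theta\mapsto(r_\theta,\nu_\theta,\sigma_\theta)$ near $t$, one has $\theta_l\to t$ at rate $O(l)$, so $r_{\theta_l}(x)=r_t(x)(1+o(1))$ uniformly in $x$ (here $r_t$ is continuous and strictly positive on the compact space $\mathbb S_+^{d-1}$), $\nu_{\theta_l}(r_s)\to\nu_t(r_s)$, $\nu_{\theta_l}(r_{\theta_l})\to\nu_t(r_t)$, $\sigma_{\theta_l}\to\sigma_t$ and $\theta_l-s\to t-s>0$, all errors being $O(l_n)=o(1)$. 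For the exponent, substituting $\Lambda(s)=sq_s-\Lambda^*(q_s)$ yields
\begin{align*}
-n\Lambda(s)+sn(q_t+l)-n\Lambda^*(q_t+l)
&=-n\bigl(\Lambda^*(q_t+l)-\Lambda^*(q_s)-s(q_t-q_s+l)\bigr),
\end{align*}
and feeding this together with the previous displays into the change-of-measure formula produces exactly the stated expansion. (As a consistency check, for $s=0$ one has $r_0=\mathbf 1$, $\kappa(0)=1$, $q_0=\lambda$, $\nu_0=\nu$, and the identity collapses to the plain Bahadur--Rao estimate $\mathbb P(\log|G_nx|\ge n q_t)\sim r_t(x)\,e^{-n\Lambda^*(q_t)}\big/\bigl(t\,\sigma_t\sqrt{2\pi n}\,\nu_t(r_t)\bigr)$.)

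The main obstacle is the double uniformity --- in $x\in\mathbb S_+^{d-1}$ and in $|l|\le l_n$, where $l_n\to0$ but $nl_n$ need not stay bounded, so one sits on the borderline between moderate and genuine large deviations for the $\theta_l$-tilted dynamics --- and, within that, the uniform control of the overshoot law and of the conditional law of $X_n^x$ at the deviation level. These are precisely what \cite[Theorem 2.2]{XGL19a} supplies, via the spectral gap of the tilted transfer operators $P_\theta$ for $\theta$ near $t$ and the analytic dependence of $(\kappa(\theta),r_\theta,\nu_\theta)$ on $\theta$; the only genuinely new work in the present deduction is the bookkeeping of the exponential reweighting by $|G_nx|^{s}$ and the passage $l\to0$.
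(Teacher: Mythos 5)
Your high-level plan --- change of measure via \eqref{basic equ1}, invoke \cite[Theorem 2.2]{XGL19a}, then Legendre-duality bookkeeping --- is the same as the paper's, but you implement the middle step quite differently. The paper applies the target-function form of \cite[Theorem 2.2]{XGL19a} directly with $\varphi=r_s$ and $\psi(u)=\psi_s(u)=e^{su}\mathbbm{1}_{\{u\ge0\}}$; the quoted theorem already delivers prefactors in terms of $t$ only (not a $\theta_l$ moving with $l$), and the entire effect of the exponential reweighting $|G_nx|^s$ collapses to a single deterministic integral $\int_{\mathbb R}e^{-ty}\psi_s(y)\,dy=\frac{1}{t-s}$. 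You instead start from a ``plain'' Bahadur--Rao expansion anchored at the tilted parameter $\theta_l$ solving $\Lambda'(\theta_l)=q_t+l$, and reintroduce the weight via two soft facts --- that the conditional law of the overshoot is asymptotically $\mathrm{Exp}(\theta_l)$ and that the conditional law of $X_n^x$ is asymptotically $\nu_{\theta_l}$ --- which you then feed back to a $t$-centered statement by continuity of $\theta\mapsto(r_\theta,\nu_\theta,\sigma_\theta,\Lambda'(\theta))$ as $l\to0$. This is morally equivalent (after the dust settles one is just computing the same Laplace transform), but it is strictly more work: the overshoot and conditional-law claims need to be proved with uniformity in $|l|\le l_n$ and $x$, and then the passage $\theta_l\to t$ uniformly over the same range must be justified. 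All of this is bundled inside the $\psi$-dressed form of \cite[Theorem 2.2]{XGL19a} the paper invokes, which already absorbs the target $\psi$ and outputs $t$-based constants, so that no separate overshoot analysis or $l\to0$ limiting argument is required. In short: your route is viable and gets the constants right, but you would either have to rederive the $\psi$-dressed form of the reference theorem (at which point your argument degenerates into the paper's), or supply independent proofs of the overshoot and conditional-law asymptotics together with their uniformity, which you currently assert without proof.
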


\begin{proof}
By \eqref{basic equ1}, we get
\begin{align*}
& \mathbb Q_s^x ( \log|G_n x|  \geq n(q_t + l)) \\
& =    \frac{1}{\kappa^n(s) r_s(x)}
\mathbb{E} \left[ r_{s}(X_n^x)  |G_n x|^s 
\mathbbm{1}_{\{\log|G_n x|  \geq n(q_t + l)\}} \right]  \nonumber\\
&  =    \frac{1}{\kappa^n(s) r_s(x)}  e^{sn(q_t+l)} 
 \mathbb{E} \Big[ r_{s}(X_n^x) \psi_s \big( \log |G_n x| - n(q_t+l) \big) \Big],
\end{align*}
where $\psi_s(u) = e^{su} \mathbbm{1}_{\{u \geq 0 \}}$, $u \in \mathbb{R}$.
From Theorem 2.2 in \cite{XGL19a} it follows that 
for any $t \in I^{\circ}_{\mu}$, $q_t = \Lambda'(t)$, 
$\varphi \in \mathcal{B}_{\gamma}$ and measurable function $\psi$ on $\mathbb{R}$ 
such that $u \mapsto e^{-s'u}\psi(u)$ is directly Riemann integrable for some $s' \in (0, s)$,
we have, as $n \to \infty$, uniformly in $|l|\leq l_n$ and $x \in \bb S_+^{d-1}$,
\begin{align} \label{Petrov-Target01}
& \mathbb{E} \Big[ \varphi(X_n^x) \psi(  \log|G_n x| - n( q_t + l )) \Big]  \nonumber\\
&  \quad = \frac{r_t(x)}{ \nu_t(r_t) }   
  \frac{ \exp \left( -n \Lambda^*(q_t + l) \right) }{\sigma_t \sqrt{2\pi n} }
  \left[  \nu_t(\varphi) \int_{\mathbb{R}} e^{-ty} \psi(y) dy  +  o(1)  \right]. 
\end{align}
Using \eqref{Petrov-Target01} with $\varphi = r_s$ and $\psi = \psi_s$, %$q = q_t = \Lambda'(t)$, 
we obtain that, uniformly in $|l|\leq l_n$ and $x \in \bb S_+^{d-1}$,
\begin{align*}
 \mathbb{E} \Big[ r_{s}(X_n^x) \psi_s \big( \log |G_n x| - n(q_t+l) \big) \Big]  
 =   \frac{r_t(x)}{ \nu_t(r_t) }  \nu_{t}(r_s) \frac{e^{-n \Lambda^*(q_t+l)}}{(t -s)\sigma_{t} \sqrt{2 \pi n} } 
 \big[ 1 + o(1) \big]. 
\end{align*}
We conclude the proof of Theorem \ref{Thm_BRLD_changedMea} by using the fact that 
$\Lambda^*(q) = sq - \Lambda(s)$ and $\Lambda(s) = \log \kappa(s)$.
\end{proof}

%When $s$ is non-positive and sufficiently close to $0$, 
%the H\"{o}lder regularity of the stationary measure $\pi_s$
%is given by the following proposition. 
%
%
%\begin{proposition}\label{PropRegu02}
%Assume conditions \ref{Condi-WeakKest}, \ref{Condi-NonArith}. 
%Then, there exist constants $c>0$ and $s_0 > 0$ such that uniformly in $s \in (-s_0, 0]$,
%the statements \eqref{RegularityIne00} and \eqref{RegularityIne} are valid. 
% %for any $s \in (-s_0, 0]$ with sufficiently small $s_0 > 0$. 
%\end{proposition}
%
%Proposition \ref{PropRegu02} is new even for $s = 0$: 
%in this case $\pi_0 = \nu_0 = \nu$
%with the stationary measure $\nu$ defined by \eqref{mu station meas}. 
%The corresponding results for invertible matrices with $s =0$ (in this case also $\pi_0=\nu_0=\nu$)
%has been obtained in \cite{GR85};   
%we also refer to \cite{BL85} for the detailed description of the method used in  \cite{GR85}
%and to \cite{BFLM11} and \cite{BQ17} for a different approach of the proof. 
%When $s<0$, Proposition \ref{PropRegu02} is deduced from the
%H\"older regularity of the measure $\nu$ and the analyticity of the eigenfunction $\kappa$. 
%
%%The proofs of Propositions \ref{PropRegularity} and \ref{PropRegu02} are technically involved and 
%%are postponed to Sections \ref{Sec:regpositive} and \ref{SecReguSNeg}.

\begin{proof}[Proof of Proposition \ref{PropRegularity}]
%We use Lemma \ref{lem equiv A4} and the fact that $\supp \nu_s = \supp \nu$ established in \cite{BDGM14}. 

As mentioned before, % in the proof of Proposition \ref{PropRegularity} for invertible matrices,
we only need to establish \eqref{RegularityIne00} and \eqref{RegularityIne} for the stationary measure $\pi_s$
since $r_s$ is bounded away from infinity and $0$ uniformly on $\mathbb{S}_+^{d-1}$.

We first prove Proposition \ref{PropRegularity} under condition \ref{Condi-KestenH}. 
By Lemma \ref{lem equiv Kesten}, the Markov chain $(X_n^x)_{n\geq 0}$ stays in the space 
$\mathbb{S}_{+,\epsilon}^{d-1}$, and therefore the support of 
its stationary measure $\nu$
%the stationary measure of %the Markov chain 
%$(X_n^x)_{n\geq 1}$, this implies that 
%$\supp \nu$ %(the support of the measure $\nu$) 
is included in  $\mathbb{S}_{+,\epsilon}^{d-1}$.
Since $\supp \nu_s = \supp \nu$ for $s \in I_{\mu}$ (by \cite[Proposition 3.1]{BDGM14}), it holds that
 %As a result, we obtain that 
 $\supp \nu_s \subset \mathbb{S}_{+,\epsilon}^{d-1}$.
As a consequence we also have $\supp \pi_s \subset \mathbb{S}_{+,\epsilon}^{d-1}$. 
This implies that $\langle f, x \rangle \geq \epsilon$
for all $f \in \mathbb{S}_+^{d-1}$, $x \in \supp \pi_s$, 
and so the bounds \eqref{RegularityIne00} and \eqref{RegularityIne} hold under condition \ref{Condi-KestenH}.

We next prove Proposition \ref{PropRegularity}
under conditions \ref{Condi-MomentH}, \ref{Condi-WeakKest}, \ref{Condi-NonArith}. 
We divide the proof into two steps. 
It is worth mentioning that the assertions shown below remain valid when $s = 0$. 

\textit{Step 1.} We prove that there exist two constants $C_1, C_2>0$ and an integer $n_0 \geq 1$ satisfying $C_1 > \Lambda'(s)$
 such that, for any $n \geq n_0$, 
 it holds uniformly in $f,x \in \mathbb S^{d-1}_+$ that 
\begin{align}\label{Pf_RegPosi001}
I_n: = \mathbb Q_s^x \left( \langle f,  X_n^x  \rangle \leq e^{-C_1 n} \right) 
 \leq e^{- C_2 n }. 
\end{align}
Let $s \in I_{\mu}$, $t\in I^{\circ}_{\mu}$ be such that $s<t$ 
and set  $q_s=\Lambda'(s)$ and $q_t = \Lambda'(t)$ (we allow $s$ to be $0$). 
Substituting $X_n^x = \frac{G_n x}{ |G_n x| }$ into \eqref{Pf_RegPosi001} , we have
\begin{align}\label{Pf_RegPosiaa}
I_n \leq  \mathbb Q_s^x \big(  \log |G_n x| > n q_t \big)
 + \mathbb Q_s^x \Big(  \log \langle f, G_n x  \rangle  \leq - (C_1 - q_t) n \Big).  
\end{align}
Since $s < t$, by Theorem \ref{Thm_BRLD_changedMea} we get that there exists a constant $c>0$
such that 
the first term on the right-hand side of \eqref{Pf_RegPosiaa} is bounded by $e^{- c n }$, 
uniformly in $x \in \mathbb S^{d-1}_+$. 
For the second term on the right-hand side of \eqref{Pf_RegPosiaa}, 
applying the Markov inequality and the change of measure formula \eqref{basic equ1}, 
it follows that for a sufficiently small constant $c_1>0$, uniformly in $f, x \in \mathbb{S}_+^{d-1}$, 
\begin{align} \label{Pf_Regu_Posi_00}
&  \mathbb Q_s^x \Big(  \log \langle f, G_n x  \rangle  \leq - (C_1 - q_t) n \Big) \nonumber\\
& \leq  e^{ - c_1 (C_1 - q_t)n } 
   \mathbb{E}_{ \mathbb Q_s^x }  \left(  \frac{1}{ \langle f, G_n x  \rangle^{c_1} } \right)  \nonumber\\
&   =  e^{ - c_1 (C_1 - q_t)n }  
  \mathbb{E} \left(  \frac{|G_nx|^{s}}{ \kappa^{n}(s) } \frac{ r_{s}(X_{n}^{x}) }{ r_{s}(x) } 
  \frac{1}{ \langle f, G_n x  \rangle^{c_1} } \right) \nonumber\\
&  \leq  e^{ - c_1 (C_1 - q_t)n }  
  \mathbb{E} \left(  \frac{|G_nx|^{s}}{ \kappa^{n}(s) } \frac{ r_{s}(X_{n}^{x}) }{ r_{s}(x) } 
  \frac{1}{ \min_{1 \leq i, j \leq d} \langle e_i, G_n e_j \rangle^{c_1} } \right),  
\end{align}
where in the last line we used the fact that  
$\min_{1 \leq i, j \leq d} \langle e_i, g e_j \rangle = \inf_{f, x \in \mathbb{S}_+^{d-1} } \langle f, g x \rangle$ for any $g \in \Gamma_{\mu}$. 
%\todo{we are here}   \todo{Hui: At the moment, we can not relax our condition.}
Since $|G_nx|\leq \|G_n\|$ and the function $r_s$ is uniformly bounded and strictly positive on $\mathbb{S}_+^{d-1}$,
using the H\"{o}lder inequality leads to 
\begin{align} \label{Pf_Regu_Posi_aa}
  & \mathbb{E} \left(  \frac{|G_nx|^{s}}{ \kappa^{n}(s) } \frac{ r_{s}(X_{n}^{x}) }{ r_{s}(x) } 
   \frac{1}{ \langle e_i, G_n e_j  \rangle^{c_1} } \right)     \nonumber\\
   &  \leq   \kappa^{-n}(s)  \mathbb{E}^{ \frac{1}{p} } \big(  \| G_n \|^{sp}  \big)
          \mathbb{E}^{ \frac{1}{p'} } \left(  \frac{ 1 }{ \langle e_i, G_n e_j  \rangle^{c_1 p'}  }  \right)   \nonumber\\
   &  \leq  \kappa^{-n}(s)  \mathbb{E}^{ \frac{1}{p} } \big(  \| G_n \|^{sp}  \big)
          \mathbb{E}^{ \frac{n}{p'} } 
           \left(  \frac{ 1 }{ \min_{1\leq i, j \leq d} \langle e_i, g_1 e_j  \rangle^{c_1 p'}  }  \right), 
\end{align}
where $1/p + 1/p' = 1$ with $p, p' >1$. 
Recall that $c_1 > 0$ can be taken sufficiently small. Taking $p$ sufficiently close to $1$ ($p'$ sufficiently large)
and using condition \ref{Condi-WeakKest},
we get that  the right-hand side of \eqref{Pf_Regu_Posi_aa} is dominated by $e^{Cn}$ with some constant $C>0$. 
Consequently, in view of \eqref{Pf_Regu_Posi_00}, choosing the constant $C_1>0$ sufficiently large,
we obtain that the right-hand side of \eqref{Pf_Regu_Posi_00} is bounded by 
$e^{- C_2 n}$ with some constant $C_2 > 0$, uniformly in $f, x \in \mathbb S^{d-1}_+$.

\textit{Step 2.} 
From the construction of $\mathbb{Q}_s^x$ and the definition of $\pi_s$, 
one can verify that for any $x \in \mathbb S_+^{d-1}$ and $n \geq 1$,  
$\pi_s = (\mathbb{Q}_s^x)^{*n} * \pi_s$, where $*$ stands for the convolution of two measures. 
Combining this with \eqref{Pf_RegPosi001}, we get that for any $s \in I_{\mu}$,
uniformly in $f \in \mathbb S_+^{d-1}$, 
\begin{align} \label{Pf_RegPosi004}
  \pi_s \big(\{x:  \langle f, x \rangle \leq e^{- C_1 n} \}  \big)
= \int_{\mathbb S_+^{d-1}} (\mathbb{Q}_s^x)^{*n} 
\big( \langle f, X_n^x \rangle  \leq e^{- C_1 n} \big) \pi_s(dx) 
\leq e^{ - C_2 n},
\end{align}
where $C_1$ and $C_2$ are positive constants given in step 1. 
For $n \geq 1$, denote $B_{f,n}:= \{x \in \mathbb S_+^{d-1}:
e^{- C_1 (n+1)} \leq  \langle f, x \rangle \leq e^{ -C_1 n } \}$. 
Choosing $\alpha \in (0, C_2/C_1 )$, 
we deduce from \eqref{Pf_RegPosi004} that, uniformly in $f \in \mathbb S_+^{d-1}$, 
\begin{align}\label{Pf_RegPosi005}
 \int_{ \mathbb S_+^{d-1} } 
\frac{1}{ \langle f,x \rangle^{\alpha} } \pi_s(dx) 
& =  \int_{\{x: \langle f, x \rangle > e^{ -C_1 n_0} \}}
\frac{1}{ \langle f, x \rangle^{\alpha} } \pi_s(dx)  
 + \sum_{n=n_0}^{\infty} \int_{B_{f,n}} \frac{1}{ \langle f,x \rangle^{\alpha}} \pi_s(dx) \nonumber\\
& \leq    e^{ \alpha C_1 n_0 } 
+ \sum_{n=n_0}^{\infty} e^{ \alpha C_1} e^{-( C_2 - \alpha C_1 ) n } < +\infty. 
\end{align}
This concludes the proof of \eqref{RegularityIne00}. 
Using the Markov inequality, we can easily deduce \eqref{RegularityIne} from \eqref{RegularityIne00}. 
\end{proof}

%%%%%%%%%%%%%%%%%%%%%%%%%%%%%%%%%%%%%%%%%%%%%%%%%%%%%%%%%%%%%%%%%%%%%%%%%%%%%%%%%%%%%
\subsection{Proof of Theorem \ref{Thm-BerryE-Posi_Weak}}
It turns out that the H\"{o}lder regularity of the stationary measure $\nu$ established in subsection \ref{subsec_HolderRegu}
plays a crucial role for proving Theorem \ref{Thm-BerryE-Posi_Weak}. 

\begin{proof}[Proof of Theorem \ref{Thm-BerryE-Posi_Weak}]
Without loss of generality, we assume that the target function $\varphi$ is non-negative. 
We first prove the Berry-Esseen type bound \eqref{BerryE01_Weak} for the scalar product $\langle f, G_n x \rangle$. 

The lower bound has been shown in \eqref{BE_Low_fff}. 
The upper bound is a consequence of  \eqref{Pf_RegPosi001} 
together with the Berry-Esseen bound \eqref{Norm-BerryE-Posi}.  
In fact, using \eqref{Pf_RegPosi001} with $s =0$, we get that 
there exist constants $C_1, C_2>0$ and $k_0 \in \bb{N}$ such that for all $n \geq k \geq k_0$, 
\begin{align}\label{Regu_n_k_01}
& \bb P \left( \langle f, X_n^x \rangle \leq e^{-C_1 k} \right)  \nonumber\\
& \leq \int \bb P \left( \langle f, (g_n \ldots g_{n-k+1}) \cdot X_{n-k}^x \rangle \leq e^{-C_1 k} \right)  
\mu(dg_1) \ldots \mu(dg_{n-k})  \nonumber\\
& \leq e^{-C_2 k}. 
\end{align}
It follows that
\begin{align*}
& \mathbb{E} \Big[ \varphi(X_n^x)
\bbm{1}_{ \big\{ \frac{\log \langle f, G_n x \rangle - n \lambda }{ \sigma \sqrt{n} } \leq y \big\} }
\Big]    \nonumber\\
 & \leq    \bb{E} \Big[ \varphi(X_n^x)
 \bbm{1}_{ \big\{ \frac{\log \langle f, G_n x \rangle - n \lambda }{ \sigma \sqrt{n} } \leq y \big\} }
    \bbm{1}_{ \big\{ \log \langle f, X_n^x \rangle >  - C_1 k  \big\} }  \Big] 
    + e^{-C_2 k} \| \varphi \|_{\infty}    \nonumber\\
  &  \leq    \bb{E} \Big[ \varphi(X_n^x)
 \bbm{1}_{ \big\{ \frac{\log \|G_n \| - C_1 k - n \lambda }{\sigma \sqrt{n}} \leq y \big\} } \Big]
 + e^{-C_2 k} \| \varphi \|_{\infty}.
\end{align*}
Taking $k = \floor{C_3 \log n}$ with 
$C_3 =  \frac{1}{2C_2}$, we get that $e^{-C_2 k} \leq \frac{C}{\sqrt{n}}$ for some constant $C>0$. 
Using the Berry-Esseen bound \eqref{Norm-BerryE-Posi} with $y$ replaced by $y_1:= y+ \frac{C_1 k }{\sigma \sqrt{n}}$, 
we obtain the following upper bound: there exists a constant $C>0$ such that for all
$x \in \bb S_+^{d-1}$, $y \in \bb{R}$, $\varphi \in \mathcal{B}_{\gamma}$, and $n\geq k_0$ with $k_0$ large enough, 
\begin{align*}
\mathbb{E} \Big[ \varphi(X_n^x)
\bbm{1}_{ \big\{ \frac{\log \langle f, G_n x \rangle - n \lambda }{ \sigma \sqrt{n} } \leq y \big\} }
\Big]  
\leq  \nu(\varphi) \Phi(y_1) +  \frac{C \log n}{\sqrt{n}}  \| \varphi \|_{\gamma}. 
\end{align*}
%\begin{align*}
%I_n \leq 
% \begin{cases}
% \nu(\varphi) \Phi(y_1) +  \frac{C \log n}{\sqrt{n}}  \| \varphi \|_{\gamma}  
%    & \forall y \in \mathbb R,  \nonumber \\ % \mbox{if}   \quad |y| \leq \sqrt{2\log \log n} ,  \nonumber\\
% \nu(\varphi) \Phi(y_1) +  \frac{C}{\sqrt{n}}  \| \varphi \|_{\gamma}     
%            & \mbox{ if }   |y|  >   \sqrt{2 \log \log n}.
% \end{cases}
%% \nu(\varphi) \Phi(y_1)  % + \frac{C \log n }{\sqrt{n}}  \| \varphi \|_{\gamma} 
%%        + \frac{C}{\sqrt{n}}  \| \varphi \|_{\infty}
%%      \leq  \nu(\varphi) \Phi(y) + \frac{C \log n }{\sqrt{n}}  \| \varphi \|_{\gamma}. 
%\end{align*}
%(Notice that  $ |y|  >   \sqrt{2 \log \log n} $ implies  $ |y_1|  >   \sqrt{2 \log \log n} $ for $n$ large enough.)
By calculations similar to \eqref{BE_Calcu_kkk}, it can be seen that for any $y \in \bb R$, 
\begin{align*}
\Phi(y_1)  \leq  \Phi(y) +  \frac{C \log n}{\sqrt{n}}. 
\end{align*}
%\begin{align*}
%\Phi(y_1)   \leq  
%  \begin{cases}
%  \Phi(y) +  \frac{C \log n}{\sqrt{n}}   
%             &         \forall y \in \mathbb R,  \nonumber \\ % \mbox{if}        \quad |y| \leq \sqrt{2\log \log n} ,  \nonumber\\
%  \Phi(y) +  \frac{C}{\sqrt{n}}   
%            & \mbox{ if }   |y|  >   \sqrt{2 \log \log n}.
% \end{cases}
%\end{align*}
This concludes the proof of \eqref{BerryE01_Weak}.

Using \eqref{BerryE01_Weak} together with the Collatz-Wielandt formula, 
the proof of \eqref{BerryE02_Weak} can be carried out in 
the same way as that of \eqref{BerryE02}. We omit the details.  
\end{proof}

%%%%%%%%%%%%%%%%%%%%%%%%%%%%%%%%%%%%%%%%%%%%%%%%%%%%%%%%%%%%%%%%%%%%%%%%%%%%%%%%%%%%%%%%%
%%%%%%%%%%%%%%%%%%%%%%%%%%%%%%%%%%%%%%%%%%%%%%%%%%%%%%%%%%%%%%%%%%%%%%%%%%%%%%%%%%%%%%%%%
\section{Proofs of moderate deviation expansions}

The aim of this section is to establish
Theorems \ref{Thm-Cram-Norm}, \ref{Thm-Cram-Posi-tag} and \ref{ciro-LDP002}  
on moderate deviation asymptotics, 
and Proposition \ref{Prop_Variance} about the expressions of the asymptotic variance $\sigma^2$. 

\subsection{Proof of Theorems \ref{Thm-Cram-Norm} and \ref{Thm-Cram-Posi-tag}}
To establish Theorems \ref{Thm-Cram-Norm} and \ref{Thm-Cram-Posi-tag}, 
we need the following 
Cram\'{e}r type moderate deviation expansion for the norm cocycle $\log | G_n x|$.  
%which has been recently established in \cite{XGL19b}.

\begin{lemma} \label{MainThmNormTarget}
Assume conditions \ref{Condi-MomentH}, \ref{CondiAPH} and \ref{Condi-VarianceH}. 
Then, as $n \to \infty$, we have, 
uniformly in $x\in \mathbb{S}_+^{d-1}$, $y \in [0, o(\sqrt{n} )]$ and $\varphi \in \mathcal{B}_{\gamma}$,
\begin{align}\label{ThmNormTarget01}
  \frac{\mathbb{E}
\left[ \varphi(X_n^x) \mathbbm{1}_{ \{ \log|G_nx| - n \lambda \geq \sqrt{n} \sigma y \} } \right] }
{ 1-\Phi(y) }
= e^{ \frac{y^3}{\sqrt{n}}\zeta ( \frac{y}{\sqrt{n}} ) }
\Big[ \nu(\varphi) + \|\varphi \|_{\gamma} O \Big( \frac{y+1}{\sqrt{n}} \Big) \Big], 
\end{align}
%and
\begin{align}\label{ThmNormTarget02}
  \frac{\mathbb{E}
\left[ \varphi(X_n^x) \mathbbm{1}_{ \{ \log|G_nx| - n \lambda \leq - \sqrt{n} \sigma y \} } \right] }
{ \Phi(-y)  }
= e^{ - \frac{y^3}{\sqrt{n}}\zeta (-\frac{y}{\sqrt{n}} ) }
\Big[ \nu(\varphi) + \|\varphi \|_{\gamma} O\Big( \frac{y+1}{\sqrt{n}} \Big) \Big].
\end{align}
\end{lemma}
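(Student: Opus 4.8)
The plan is to obtain Lemma \ref{MainThmNormTarget} directly from \cite{XGL19b}. Indeed, the Cram\'er type moderate deviation expansion for the norm cocycle $\log|G_n x|$, taken jointly with the Markov chain $(X_n^x)_{n \geq 0}$ and against a H\"older target function $\varphi \in \mathcal{B}_\gamma$, was established there under precisely the hypotheses \ref{Condi-MomentH}, \ref{CondiAPH} and \ref{Condi-VarianceH}; both the upper-tail form \eqref{ThmNormTarget01} and the lower-tail form \eqref{ThmNormTarget02} are covered (the lower tail is handled in the same way, tilting with a negative parameter). The linear dependence of the error term on $\|\varphi\|_\gamma$, and in particular its validity when $\nu(\varphi)=0$, is part of that statement. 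So in the body of the paper the proof reduces to invoking this reference, and the genuinely hard work lies in \cite{XGL19b}.

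For the record, and because it explains the structure of the right-hand side, let me recall the mechanism. The main device is the family of transfer operators $P_s$ of \eqref{Def_Trans_Oper_01}, which under \ref{Condi-MomentH} and \ref{CondiAPH} have a spectral gap on $\mathcal{B}_\gamma$ (via the perturbation theory of \cite{HH01}), with dominant eigenvalue $\kappa(s)$, strictly positive eigenfunction $r_s$ and eigenmeasure $\nu_s$; moreover $\Lambda = \log\kappa$ is real-analytic and, under \ref{Condi-VarianceH}, strictly convex near $0$, with $\Lambda'(0)=\lambda$, $\Lambda''(0)=\sigma^2$, $r_0=\mathbf{1}$ and $\nu_0=\nu$. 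Given $y\in[0,o(\sqrt n)]$, one picks the tilting parameter $s=s_n(y)$ via $\Lambda'(s_n)=\lambda+\sigma y/\sqrt n$, so that $s_n\to 0$, and rewrites, using the change-of-measure identity \eqref{basic equ1},
\begin{align*}
\mathbb{E}\!\left[\varphi(X_n^x)\,\mathbbm{1}_{\{\log|G_n x|-n\lambda\geq\sqrt n\,\sigma y\}}\right]
&= \kappa^n(s_n)\,r_{s_n}(x)\,\mathbb{E}_{\mathbb{Q}_{s_n}^x}\!\left[\frac{\varphi(X_n^x)}{r_{s_n}(X_n^x)}\,e^{-s_n\log|G_n x|}\,\mathbbm{1}_{\{\log|G_n x|-n\lambda\geq\sqrt n\,\sigma y\}}\right].
\end{align*}
Under $\mathbb{Q}_{s_n}^x$ the cocycle $\log|G_n x|$ has mean $\approx n\Lambda'(s_n)=n\lambda+\sqrt n\,\sigma y$ and variance $\approx n\Lambda''(s_n)$, so the event inside the indicator sits at the centre of the tilted distribution. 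Inserting a Berry--Esseen / Edgeworth expansion for $\log|G_n x|$ under $\mathbb{Q}_{s_n}^x$ (again a consequence of the spectral gap, exactly in the spirit of \eqref{Petrov-Target01}), carrying out the Gaussian integral against $e^{-s_n u}$, and Taylor-expanding the Legendre transform $\Lambda^*$ of $\Lambda$ around $\lambda$ produces the prefactor $\exp\{(y^3/\sqrt n)\,\zeta(y/\sqrt n)\}$, while the remaining factor collapses to $\nu(\varphi)+\|\varphi\|_\gamma\,O((y+1)/\sqrt n)$ once one uses the continuity of $s\mapsto(r_s,\nu_s)$ in $\mathcal{B}_\gamma$ at $s=0$.

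The delicate point — and the reason one cites \cite{XGL19b} rather than reproving this from scratch — is that every estimate above must hold uniformly in $x\in\mathbb{S}_+^{d-1}$, in $\varphi\in\mathcal{B}_\gamma$ with error proportional to $\|\varphi\|_\gamma$, and in $y$ all the way up to $o(\sqrt n)$; this forces simultaneous control of $P_{s_n}^n$ acting on $\varphi/r_{s_n}$, of the analytic perturbation $s\mapsto(\kappa(s),r_s,\nu_s)$ near $s=0$, and of a non-uniform (large-deviation scale) Edgeworth bound for the tilted cocycle. All these ingredients are assembled in \cite{XGL19b}, whence Lemma \ref{MainThmNormTarget} follows immediately, and with it Theorems \ref{Thm-Cram-Norm} and \ref{Thm-Cram-Posi-tag}.
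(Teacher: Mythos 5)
Your proposal is correct and matches the paper's own treatment: the paper proves Lemma \ref{MainThmNormTarget} by direct citation to \cite{XGL19b}, which is exactly what you do, and your accompanying sketch of the tilting/spectral-gap/saddle-point mechanism is consistent with the brief description the paper gives of the method used in that reference.
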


Lemma \ref{MainThmNormTarget} has been recently established in \cite{XGL19b}
by developing a new smoothing inequality, applying a saddle point method 
and spectral gap properties of the transfer operator corresponding to the Markov chain $(X_n^x)_{n \geq 0}$. 
Note that condition \ref{Condi-KestenH} is not assumed in Lemma \ref{MainThmNormTarget}
and the expansions \eqref{ThmNormTarget01} and \eqref{ThmNormTarget02} 
hold uniformly with respect to the starting point $x$ 
on the whole projective space $\mathbb{S}_+^{d-1}$. 

We now prove Theorem \ref{Thm-Cram-Norm} using Theorem \ref{Thm-BerryE-Norm}, 
Lemmas \ref{Lem_Norm} and \ref{MainThmNormTarget}. 

\begin{proof}[Proof of Theorem \ref{Thm-Cram-Norm}]
Without loss of generality, we assume that $\varphi$ is non-negative on $\mathbb{S}_+^{d-1}$. 

We first prove the moderate deviation expansion \eqref{CramerNorm01}. 
In the case where $y \in [0,1]$, 
%the expansion \eqref{CramerNorm02} is a direct consequence of the Berry-Esseen bound \eqref{BerryE-Posi02};
the expansion \eqref{CramerNorm01} follows from the Berry-Esseen bound \eqref{BerryE-Posi02} together with the fact that 
there exists a constant $C>0$ such that 
for all $n \geq 1$ and $\varphi \in \mathcal{B}_{\gamma}$, 
\begin{align} \label{Ch5_SP_ddd}
\sup_{ x \in \mathbb{S}_+^{d-1} } 
\Big| \mathbb{E} \big[ \varphi(X_n^x) \big]
 - \nu(\varphi) \Big| \leq  \frac{C}{\sqrt{n}}  \| \varphi \|_{\gamma}.
\end{align}
It remains to establish the expansion \eqref{CramerNorm01}
in the case where $y \in (1, o(\sqrt{n} )]$. 
The proof consists of lower and upper bounds.

The lower bound is an easy consequence of Lemma \ref{MainThmNormTarget}. 
In fact, using the expansion \eqref{ThmNormTarget01} together with the fact $\log \| G_n \| \geq \log |G_n x|$,  
there exists a constant $C>0$ such that for all $n \geq 1$, 
$x \in \mathbb{S}_+^{d-1}$, $y \in (1, o(\sqrt{n} )]$ and $\varphi \in \mathcal{B}_{\gamma}$,
\begin{align}\label{Pf_Norm_Low}
  \frac{\mathbb{E}
\left[ \varphi(X_n^x)
  \mathbbm{1}_{ \{ \log \| G_n \| - n \lambda \geq \sqrt{n} \sigma y \} } \right] }
{ 1-\Phi(y)  }
\geq e^{ \frac{y^3}{\sqrt{n}}\zeta (\frac{y}{\sqrt{n}} ) }
\Big[ \nu(\varphi) - C \frac{y+1}{\sqrt{n}} \|\varphi \|_{\gamma}  \Big].
\end{align}

The upper bound can be deduced from Lemmas \ref{Lem_Norm} and \ref{MainThmNormTarget}. 
From Lemma \ref{Lem_Norm}, we have seen that the inequality \eqref{Posi-NormLower01} holds for some constant $C_1 >0$.
For any $y \in (1, o(\sqrt{n} )]$, we denote 
\begin{align*}
y_1 = y - \frac{C_1}{ \sigma \sqrt{n} }. 
\end{align*} 
Since $y_1 \in [0, o(\sqrt{n} )]$ for sufficiently large $n$,
we are allowed to apply the moderate deviation expansion \eqref{ThmNormTarget01}
with $y$ replaced by $y_1$. 
Specifically, using \eqref{ThmNormTarget01} and \eqref{Posi-NormLower01},
we obtain that for any compact set $K \subset (\mathbb{S}_+^{d-1})^{\circ}$, there exists a constant $C>0$ such that,
as $n \to \infty$, uniformly in $x \in K$,  $y \in (1, o(\sqrt{n} )]$ and $\varphi \in \mathcal{B}_{\gamma}$,  
\begin{align}\label{Crame_Norm_Low02}
\frac{\mathbb{E}
\left[ \varphi(X_n^x) \mathbbm{1}_{ \{ \log \| G_n \| - n \lambda \geq \sqrt{n} \sigma y \} } \right] }
{ 1-\Phi(y_1) }
&  \leq  \frac{\mathbb{E}
\left[ \varphi(X_n^x) \mathbbm{1}_{ \{ \log |G_n x| - n \lambda \geq \sqrt{n} \sigma y_1 \} } \right] }
{ 1-\Phi(y_1) }      \nonumber\\
&  \leq  e^{ \frac{y_1^3}{\sqrt{n}}\zeta(\frac{y_1}{\sqrt{n}} ) }
\Big[ \nu(\varphi) + C \frac{y_1 + 1}{\sqrt{n}} \|\varphi \|_{\gamma}  \Big].
\end{align}
Since the Cram\'{e}r series $\zeta$ is convergent and analytic in a small neighborhood of $0$,
there exist constants $c, C>0$ such that for all $y \in (1, o(\sqrt{n} )]$, 
\begin{align}\label{Pf_Cram_Ine_kkk}
\left| \zeta( \frac{y_1}{\sqrt{n}} ) - \zeta(\frac{y}{\sqrt{n}} ) \right| 
\leq c \frac{|y_1 - y|}{\sqrt{n}}
\leq \frac{ C }{ n }.  
\end{align}
By simple calculations, it follows that uniformly in $y \in (1, o(\sqrt{n} )]$,
\begin{align}\label{Pf_Cram_Ine_eee}
& \exp \left\{  \frac{y_1^3}{\sqrt{n}}\zeta(\frac{y_1}{\sqrt{n}} ) 
               - \frac{y^3}{\sqrt{n}}\zeta(\frac{y}{\sqrt{n}} ) \right\} \nonumber\\
& =  \exp \left\{ \Big[ \frac{y_1^3}{\sqrt{n}} - \frac{y^3}{\sqrt{n}} \Big]  \zeta(\frac{y_1}{\sqrt{n}} ) 
                 \right\}
   \exp \left\{  \frac{y^3}{\sqrt{n}} \Big[ \zeta(\frac{y_1}{\sqrt{n}} ) - \zeta(\frac{y}{\sqrt{n}} ) \Big]
               \right\}   \nonumber\\
& =  \exp \left\{ \Big[ - \frac{3C_1}{\sigma} \frac{y^2}{n} + \frac{3 C_1^2}{\sigma^2} \frac{y}{n^{3/2}}
                       - \frac{C_1^3}{\sigma^3} \frac{1}{n^2}  \Big]  \zeta(\frac{y_1}{\sqrt{n}} ) 
                 \right\}     \nonumber\\
& \quad  \times   \exp \left\{  \frac{y^3}{\sqrt{n}} \Big[ \zeta(\frac{y_1}{\sqrt{n}} ) - \zeta(\frac{y}{\sqrt{n}} ) \Big]
               \right\}   \nonumber\\
& \leq  \exp \left\{  C_2 \Big( \frac{y^2}{n} + \frac{1}{n^2} \Big) \right\}
        \exp \left\{  C_3 \frac{y^3}{n^{3/2}}  \right\}   \nonumber\\
& \leq  1 + C_4 \frac{y^2 + 1}{ n }. 
\end{align}
Note that   
\begin{align*}
  \frac{1 - \Phi(y_1) }{1 - \Phi(y) } 
          = 1 + \left( \int_{ y -  \frac{C_1}{\sigma \sqrt{n}} }^{y} e^{- \frac{t^2}{2}} dt \right)
                   \left( \int_y^{\infty} e^{- \frac{t^2}{2}} dt \right)^{-1}.  
\end{align*}
%It is easy to see that $1< \frac{1 - \Phi(y_1) }{1 - \Phi(y) }  < 1 + \frac{C}{\sqrt{n}}$, uniformly in $y \in [0,1]$. 
Using the basic inequality 
$\frac{y}{y^2 + 1} e^{ - \frac{y^2}{2} } < \int_y^{\infty} e^{- \frac{t^2}{2}} dt$, $y > 1$, 
we obtain that uniformly in $y \in (1, o(\sqrt{n} )]$,
\begin{align*}
1 < \frac{1 - \Phi(y_1) }{1 - \Phi(y) }   & <  1 + \frac{y^2 + 1}{y} e^{ \frac{y^2}{2} }    
        \frac{C_1}{\sigma \sqrt{n}} e^{ -\frac{1}{2} (y -  \frac{C_1}{\sigma \sqrt{n}})^2 }   \nonumber\\
 & =  1 + (y + \frac{1}{y})  \frac{C_1}{\sigma \sqrt{n}}  
      e^{ \frac{C_1 y}{ \sigma \sqrt{n} } - \frac{C_1^2}{ 2 \sigma^2 n } }  
  = 1 + O( \frac{y}{ \sqrt{n} } ). 
\end{align*}
This implies that uniformly in $y \in (1, o(\sqrt{n} )]$,
\begin{align}\label{Crame_Norm_Low03}
\frac{1 - \Phi(y_1) }{1 - \Phi(y) } = 1+ O(\frac{y+1}{ \sqrt{n} }).
\end{align}
Note that $\frac{y_1 + 1}{\sqrt{n}} = O (\frac{y + 1}{\sqrt{n}})$.
Combining this with \eqref{Crame_Norm_Low02}, \eqref{Pf_Cram_Ine_eee} and \eqref{Crame_Norm_Low03}, 
we obtain that there exists a constant $C>0$ such that,
as $n \to \infty$, uniformly in $x \in K$,  $y \in (1, o(\sqrt{n} )]$ and $\varphi \in \mathcal{B}_{\gamma}$,  
\begin{align*}
  \frac{\mathbb{E}
\left[ \varphi(X_n^x) \mathbbm{1}_{ \{ \log \| G_n \| - n \lambda \geq \sqrt{n} \sigma y \} } \right] }
{ 1-\Phi(y)  }
\leq e^{ \frac{y^3}{\sqrt{n}}\zeta(\frac{y}{\sqrt{n}} ) }
\Big[ \nu(\varphi) + C \frac{y+1}{ \sqrt{n} } \|\varphi \|_{\gamma} \Big].
\end{align*}
Together with \eqref{Pf_Norm_Low}, 
this concludes the proof of the expansion \eqref{CramerNorm01}.

\medskip

We next prove the moderate deviation expansion \eqref{CramerNorm02}.
The proof consists of upper and lower bounds.

For the upper bound, in a similar way as in the proof of \eqref{Pf_Norm_Low}, 
using the expansion \eqref{ThmNormTarget02} together with the fact $\log \| G_n \| \geq \log |G_n x|$, 
we immediately get that there exists a constant $C>0$ such that, as $n \to \infty$, 
uniformly in $x \in \mathbb{S}_+^{d-1}$, $y \in [0, o(\sqrt{n} )]$ and $\varphi \in \mathcal{B}_{\gamma}$,
\begin{align}\label{Pf_Cra_Low_nn}
\frac{\mathbb{E}
  \left[ \varphi(X_n^x) \mathbbm{1}_{ \{ \log \| G_n \| - n \lambda \leq - \sqrt{n} \sigma y \} } \right] }
  { \Phi(-y) }
    \leq  e^{ - \frac{y^3}{\sqrt{n}}\zeta (-\frac{y}{\sqrt{n}} ) }
   \Big[ \nu(\varphi) +  \|\varphi \|_{\gamma} O \Big( \frac{ y+1 }{ \sqrt{n} } \Big) \Big].
\end{align}

For the lower bound, recall that by Lemma \ref{Lem_Norm}, 
the inequality \eqref{Posi-NormLower01} holds for some constant $C_1 >0$. 
For any $y \in [0, o(\sqrt{n} )]$, we denote 
\begin{align*}
y_2 = y + \frac{C_1}{ \sigma \sqrt{n} }, 
\end{align*} 
and it holds that $y_2 \in [0, o(\sqrt{n} )]$. 
Applying the inequality \eqref{Posi-NormLower01} 
and the moderate deviation expansion \eqref{ThmNormTarget01} with $y$ replaced by $y_2$,
we obtain that for any compact set $K \subset (\mathbb{S}_+^{d-1})^{\circ}$, there exists a constant $C>0$ such that,
as $n \to \infty$, uniformly in $x \in K$,  $y \in [0, o(\sqrt{n} )]$ and $\varphi \in \mathcal{B}_{\gamma}$,  
\begin{align}\label{Cram_Norm_dff}
\frac{\mathbb{E}
  \left[ \varphi(X_n^x) \mathbbm{1}_{ \{ \log \| G_n \| - n \lambda \leq - \sqrt{n} \sigma y \} } \right] }
  { \Phi(-y_2) }
  &  \geq  \frac{\mathbb{E}
  \left[ \varphi(X_n^x) \mathbbm{1}_{ \{ \log | G_n x| - n \lambda \leq - \sqrt{n} \sigma y_2 \} } \right] }
  { \Phi(-y_2) }   \nonumber\\
  & \geq   e^{ - \frac{y_2^3}{\sqrt{n}}\zeta (-\frac{y_2}{\sqrt{n}} ) }
   \Big[ \nu(\varphi) +  \|\varphi \|_{\gamma} O \Big( \frac{ y_2 +1 }{ \sqrt{n} } \Big) \Big].
\end{align}
Similarly to \eqref{Pf_Cram_Ine_kkk} and \eqref{Pf_Cram_Ine_eee}, by simple calculations, 
we get that uniformly in $y \in [0, o(\sqrt{n} )]$,
\begin{align}\label{Pf_Ine_2_kkk}
\left| \zeta( -\frac{y}{\sqrt{n}} ) - \zeta( -\frac{y_2}{\sqrt{n}} )  \right| 
\leq c \frac{|y_2 - y|}{\sqrt{n}}
\leq \frac{ C }{ n },    
\end{align}
and 
\begin{align}\label{Pf_Cram_2_eee}
& \exp \left\{  \frac{y^3}{ \sqrt{n} }\zeta( -\frac{y}{\sqrt{n}} ) 
               - \frac{y_2^3}{ \sqrt{n} }\zeta( -\frac{y_2}{\sqrt{n}} ) \right\} \nonumber\\
& =  \exp \left\{ \Big[ \frac{y^3}{\sqrt{n}} - \frac{y_2^3}{\sqrt{n}} \Big]  \zeta(-\frac{y}{\sqrt{n}} ) 
                 \right\}
   \exp \left\{  \frac{y_2^3}{\sqrt{n}} \Big[ \zeta( -\frac{y}{\sqrt{n}} ) - \zeta( -\frac{y_2}{\sqrt{n}} ) \Big]
               \right\}   \nonumber\\
& =  \exp \left\{ \Big[ - \frac{3C_1}{\sigma} \frac{y^2}{n} - \frac{3 C_1^2}{\sigma^2} \frac{y}{n^{3/2}}
                       - \frac{C_1^3}{\sigma^3} \frac{1}{n^2}  \Big]  \zeta( -\frac{y}{\sqrt{n}} ) 
                 \right\}   \nonumber \\
& \quad \times   \exp \left\{  \frac{y_2^3}{\sqrt{n}} \Big[ \zeta( -\frac{y}{\sqrt{n}} ) - \zeta( -\frac{y_2}{\sqrt{n}} ) \Big]
               \right\}  \nonumber\\
& \geq  \exp \left\{ - C_2 \frac{y^2 + 1}{n}  \right\}
        \exp \left\{ - C_3 \frac{y^3 + 1}{n^{3/2}}  \right\}   \nonumber\\
& \geq  1 - C_4 \frac{y^2 + 1}{ n }. 
\end{align}
Notice that   
\begin{align*}
  \frac{\Phi(-y_2) }{ \Phi(-y) } 
  =  \frac{ 1 - \Phi(y_2) }{ 1 - \Phi(y) }
     = 1 - \left( \int_{ y }^{ y + \frac{C_1}{\sigma \sqrt{n}} } e^{- \frac{t^2}{2}} dt \right)
                 \left( \int_y^{\infty} e^{- \frac{t^2}{2}} dt \right)^{-1}.  
\end{align*}
It is easy to see that $1 > \frac{\Phi(-y_2) }{ \Phi(-y) }  > 1 - \frac{C}{\sqrt{n}}$, uniformly in $y \in [0,1]$. 
From the basic inequality 
$\frac{y}{y^2 + 1} e^{ - \frac{y^2}{2} } < \int_y^{\infty} e^{- \frac{t^2}{2}} dt$, $y > 1$, 
we deduce that uniformly in $y \in (1, o(\sqrt{n} )]$,
\begin{align*}
1 > \frac{\Phi(-y_2) }{ \Phi(-y) }  &  >  1 - \frac{y^2 + 1}{y} e^{ \frac{y^2}{2} }    
        \frac{C_1}{\sigma \sqrt{n}} e^{ -\frac{1}{2} (y + \frac{C_1}{\sigma \sqrt{n}})^2 }   \nonumber\\
 & =  1 - (y + \frac{1}{y})  \frac{C_1}{\sigma \sqrt{n}}  
      e^{ -\frac{C_1 y}{ \sigma \sqrt{n} } - \frac{C_1^2}{ 2 \sigma^2 n } }  
  = 1 + O( \frac{y}{ \sqrt{n} } ). 
\end{align*}
Hence we get that uniformly in $y \in [0, o(\sqrt{n} )]$,
\begin{align}\label{Crame_Norm_hh_03}
\frac{\Phi(-y_2) }{ \Phi(-y) } = 1 + O(\frac{y+1}{ \sqrt{n} }).
\end{align}
Note that $\frac{y_2 + 1}{\sqrt{n}} = O (\frac{y + 1}{\sqrt{n}})$.
Combining this with \eqref{Cram_Norm_dff}, \eqref{Pf_Cram_2_eee} and \eqref{Crame_Norm_hh_03}, 
we obtain that there exists a constant $C>0$ such that,
as $n \to \infty$, uniformly in $x \in K$,  $y \in [0, o(\sqrt{n} )]$ and $\varphi \in \mathcal{B}_{\gamma}$,  
\begin{align*}
\frac{\mathbb{E}
  \left[ \varphi(X_n^x) \mathbbm{1}_{ \{ \log \| G_n \| - n \lambda \leq - \sqrt{n} \sigma y \} } \right] }
  { \Phi(-y_2) }
\geq  e^{ - \frac{y^3}{\sqrt{n}}\zeta (-\frac{y}{\sqrt{n}} ) }
\Big[ \nu(\varphi) + C \frac{y+1}{ \sqrt{n} } \|\varphi \|_{\gamma} \Big].
\end{align*}
This, together with the upper bound \eqref{Pf_Cra_Low_nn}, 
concludes the proof of the moderate deviation expansion \eqref{CramerNorm02}.
%From the basic inequality
%\begin{align}\label{Pf_Norm_Ine_ttt}
% \frac{1}{\sqrt{2\pi}} \Big( \frac{1}{y} - \frac{1}{y^3} \Big) e^{- \frac{y^2}{2}} 
% \leq  1 - \Phi(y) \leq \frac{1}{\sqrt{2\pi} y} e^{- \frac{y^2}{2}}  \quad \mbox{for} \  y >0, 
%\end{align}
\end{proof}

We next prove Theorem \ref{Thm-Cram-Posi-tag}
based on Lemmas \ref{lem equiv Kesten} and \ref{MainThmNormTarget}.

\begin{proof}[Proof of Theorem \ref{Thm-Cram-Posi-tag}]
%We only give a proof of Theorem \ref{Thm-Cram-Posi-tag}
%since Theorem \ref{Thm-Cram-Posi} is a direct consequence of Theorem \ref{Thm-Cram-Posi-tag}
%by taking $\varphi = 1$.

Without loss of generality, we assume that $\varphi$ is non-negative on $\mathbb{S}_+^{d-1}$.
We first prove \eqref{CramerThm01}.
The proof consists of upper and lower bounds.

\textit{Upper bound.}
Since $\log \langle f, G_n x \rangle \leq \log |G_n x|$,  
applying Lemma \ref{MainThmNormTarget}, this implies that there exists a constant $C>0$ such that,
as $n \to \infty$, 
uniformly in $f, x \in \mathbb{S}_+^{d-1}$, $y \in [0, o(\sqrt{n} )]$ and $\varphi \in \mathcal{B}_{\gamma}$,
\begin{align}\label{Pf-Scal-Upp}
  \frac{\mathbb{E}
\left[ \varphi(X_n^x)
  \mathbbm{1}_{ \{ \log \langle f, G_nx \rangle - n \lambda \geq \sqrt{n} \sigma y \} } \right] }
{ 1-\Phi(y)  }
\leq e^{ \frac{y^3}{\sqrt{n}}\zeta (\frac{y}{\sqrt{n}} ) }
\Big[ \nu(\varphi) + C \frac{y+1}{\sqrt{n}} \|\varphi \|_{\gamma} \Big].
\end{align}

\textit{Lower bound.}
%Using Lemma \ref{lem equiv Kesten}, we get that
%there exists a constant $C_1>0$ such that for all $f, x \in \mathbb{S}_+^{d-1}$ and $n \in \mathbb{N}$,
%\begin{align} \label{Posi-ScalLower01}
%\log | G_n x| \leq \log \langle f, G_n x \rangle + C_1.
%\end{align}
Using \eqref{Posi-ScalLower01} and applying \eqref{ThmNormTarget01} in Lemma \ref{MainThmNormTarget},
with $y_1 = y + \frac{C_1}{ \sigma \sqrt{n} } $, we obtain
that there exists a constant $c>0$ such that
\begin{align}\label{Posi-ScalLower02}
\frac{\mathbb{E}
\left[ \varphi(X_n^x) \mathbbm{1}_{ \{ \log | \langle f, G_n x \rangle| - n \lambda \geq \sqrt{n} \sigma y \} } \right] }
{ 1-\Phi(y_1) }
\geq e^{ \frac{y_1^3}{\sqrt{n}}\zeta(\frac{y_1}{\sqrt{n}} ) }
\Big[ \nu(\varphi) - c \frac{y_1 + 1}{\sqrt{n}}  \|\varphi \|_{\gamma}  \Big].
\end{align}
In an analogous way as in the proof of the upper bound in Theorem \ref{Thm-Cram-Norm}, 
one can verify that $| \zeta( \frac{y_1}{\sqrt{n}} ) - \zeta(\frac{y}{\sqrt{n}} ) | \leq \frac{ C }{ n }$,
uniformly in $y \in [0, o(\sqrt{n} )]$.
Moreover, elementary calculations yield that uniformly in $y \in [0, o(\sqrt{n} )]$,
it holds that 
$e^{ \frac{y_1^3}{\sqrt{n}}\zeta(\frac{y_1}{\sqrt{n}} )
   - \frac{y^3}{\sqrt{n}}\zeta(\frac{y}{\sqrt{n}} ) }
= 1 + O(\frac{y^2 + 1}{ n })$,
$\frac{1 - \Phi(y_1) }{1 - \Phi(y) }
= 1+ O(\frac{y+1}{ \sqrt{n} })$
and 
%\begin{align}\label{Posi-ScalLower03}
%e^{ \frac{y_1^3}{\sqrt{n}}\zeta(\frac{y_1}{\sqrt{n}} )
%   - \frac{y^3}{\sqrt{n}}\zeta(\frac{y}{\sqrt{n}} ) }
%= 1 + O(\frac{y^2 + 1}{ n }).
%\end{align}
%\begin{align}\label{Posi-ScalLower04}
%\frac{1 - \Phi(y_1) }{1 - \Phi(y) }
%= 1+ O(\frac{y+1}{ \sqrt{n} }).
%\end{align}
%%where $r_n' \to 0$, as $n \to \infty$.
$\frac{y_1 + 1}{\sqrt{n}} = O (\frac{y + 1}{\sqrt{n}})$.
Combining this with \eqref{Posi-ScalLower02}, we obtain
\begin{align*}
  \frac{\mathbb{E}
\left[ \varphi(X_n^x) \mathbbm{1}_{ \{ \log |\langle f, G_nx \rangle| - n \lambda \geq \sqrt{n} \sigma y \} } \right] }
{ 1-\Phi(y)  }
\geq e^{ \frac{y^3}{\sqrt{n}}\zeta(\frac{y}{\sqrt{n}} ) }
\Big[ \nu(\varphi) - c \frac{y+1}{ \sqrt{n} }  \|\varphi \|_{\gamma}  \Big].
\end{align*}
Together with the upper bound \eqref{Pf-Scal-Upp}, this concludes the proof of \eqref{CramerThm01}.
The proof of \eqref{CramerThm02} is similar to that of \eqref{CramerThm01} by using \eqref{ThmNormTarget02}
and Lemma \ref{lem equiv Kesten}.

The proof of the expansions \eqref{CramerThm03} and \eqref{CramerThm04} for the spectral radius $\rho(G_n)$ 
can be carried out in an analogous way 
using Theorem \ref{Thm-Cram-Norm}, Lemma \ref{MainThmNormTarget} and  inequality \eqref{Ine_Spectral01}. 
We omit the details. 
\end{proof}

\subsection{Proof of Theorem \ref{ciro-LDP002}}
We establish Theorem \ref{ciro-LDP002} 
on the moderate deviation principles for the entry $G_n^{i,j}$ and the spectral radius $\rho(G_n)$. 
Under conditions \ref{Condi-MomentH}, \ref{Condi-KestenH} and \ref{Condi-VarianceH}, 
the results are direct consequences of Theorem \ref{Thm-Cram-Posi-tag}. 
Under conditions \ref{Condi-MomentH}, \ref{Condi-WeakKest} and \ref{Condi-NonArith},
the proof relies on the H\"{o}lder regularity of the stationary measure $\nu$ 
shown in Proposition \ref{PropRegularity}. 

\begin{proof}[Proof of Theorem \ref{ciro-LDP002}]
As mentioned above, it remains to establish Theorem \ref{ciro-LDP002} under 
conditions \ref{Condi-MomentH}, \ref{Condi-WeakKest} and \ref{Condi-NonArith}.
We first prove the assertion (1) on the moderate deviation principle 
for the scalar product $\langle f, G_n x \rangle$. 

Let $\varphi \in \mathcal{B}_{\gamma}$ be any real-valued function satisfying $\nu(\varphi) > 0$. 
By Lemma 4.4 of \cite{HL12}, 
it suffices to prove the following moderate deviation asymptotics: 
for any $y >0$, uniformly in $f, x \in \bb S_+^{d-1}$, 
\begin{align}
&  \lim_{n\to \infty} \frac{n}{b_n^{2}}
\log   
\bb{E}  \Big[   \varphi(X_n^x) 
\bbm{1}_{ \big\{ \frac{\log \langle f, G_n x \rangle  - n\lambda }{b_n} \geq y  \big\} }   \Big] 
=  - \frac{y^2}{2\sigma^2},    \label{MDP_Entry_y_Posi}   \\
&   \lim_{n\to \infty} \frac{n}{b_n^{2}}
\log   
\bb{E}  \Big[   \varphi(X_n^x) 
\bbm{1}_{ \big\{ \frac{\log \langle f, G_n x \rangle - n\lambda }{b_n} \leq -y  \big\} }   \Big] 
=  - \frac{y^2}{2\sigma^2}.   \label{MDP_Entry_y_Neg}
\end{align}

We first prove \eqref{MDP_Entry_y_Posi}. 
The upper bound follows immediately from Lemma \ref{MainThmNormTarget} and the fact that
$\langle f, G_n x \rangle \leq | G_n x|$: 
for any $y>0$, uniformly in $f, x \in \bb S_+^{d-1}$,
\begin{align}\label{MDP_Entry_y_Posi_Upp}  
\limsup_{n\to \infty} \frac{n}{b_n^{2}}
\log   
\bb{E}  \Big[   \varphi(X_n^x) 
\bbm{1}_{  \big\{ \frac{\log \langle f, G_n x \rangle - n\lambda }{b_n} \geq y  \big\} }   \Big] 
\leq - \frac{y^2}{2\sigma^2}.  
\end{align}
The lower bound can be deduced from Lemma \ref{MainThmNormTarget} 
together with Proposition \ref{PropRegularity}.
Specifically, using \eqref{Regu_n_k_01}, 
we obtain that there exist constants $C_1, C_2 >0$ and $k_0 \in \bb{N}$ such that for all $n \geq k \geq k_0$,
\begin{align}\label{Entry_Inequa_01}
 I_n: & =  \bb{E}  \Big[   \varphi(X_n^x) 
\bbm{1}_{  \big\{ \frac{\log \langle f, G_n x \rangle - n\lambda }{b_n} \geq y  \big\} }  \Big]   \nonumber\\
 & \geq 
 \bb{E}  \Big[   \varphi(X_n^x) 
 \bbm{1}_{  \big\{ \frac{\log \langle f, G_n x \rangle - n\lambda }{b_n} \geq y  \big\} } 
  \bbm{1}_{ \big\{ \log \langle f, G_n x \rangle - \log |G_n x|  \geq  - C_1 k  \big\} }  \Big]   \nonumber\\
 & \geq  
  \bb{E}  \Big[   \varphi(X_n^x) 
 \bbm{1}_{ \big\{ \log |G_n x| - n\lambda  \geq y b_n + C_1  k  \big\} } 
  \bbm{1}_{ \big\{ \log \langle f, G_n x \rangle - \log |G_n x| \geq  - C_1 k  \big\} }  \Big]  \nonumber\\   
 & \geq   \bb{E}  \Big[   \varphi(X_n^x) 
 \bbm{1}_{ \big\{ \log |G_n x| - n\lambda  \geq y b_n + C_1 k  \big\} }  \Big] 
  - e^{-C_2 k} \|\varphi \|_{\infty}.  
% \nonumber\\
% &\quad   -  \bb{E}  \Big[   \varphi(X_n^x) 
%  \bbm{1}_{ \{ \log \|G_n\| - n\lambda  \geq y b_n + \ee  k \} } 
%   \bbm{1}_{ \{ \log \rho(G_n) - \log \|G_n\|  <  - \ee  k  \} }  \Big].
\end{align}
In the sequel, we  take 
\begin{align}\label{Pf_Range_k}
k = \floor[\Big]{ C_3 \frac{b_n^2}{n} },  
\end{align}
where $C_3 >0$ is a  constant whose value will be chosen large enough. 
From the moderate deviation expansion \eqref{ThmNormTarget01}, 
it follows that for any $y>0$ and $\eta > 0$, there exists $n_0 \in \bb{N}$ such that for all $n \geq n_0$,
\begin{align}\label{Ch6_Bound_Norm_aa}
\bb{E}  \Big[   \varphi(X_n^x) 
 \bbm{1}_{ \big\{ \frac{\log | G_n x| - n\lambda }{b_n} \geq y  \big\} }   \Big] 
 \geq  e^{ - \frac{b_n^2}{n} \big(  \frac{y^2}{2 \sigma^2} + \eta \big) }. 
\end{align}
Set $$b_n' = b_n + \frac{C_1 k}{y}.$$   
We easily see that the sequence $(b_n')_{n \geq 1}$ satisfies 
$\frac{b_n'}{\sqrt{n}}\to \infty$ and $\frac{b_n'}{n} \to 0$, as $n \to \infty$. 
%Since $\nu(\varphi) > 0$, we have $\nu(\varphi^2) > 0$ by the Cauchy-Schwartz inequality. 
Using \eqref{Ch6_Bound_Norm_aa}, we get that 
uniformly in $f, x \in \bb S_+^{d-1}$,
\begin{align*}
 \bb{E}  \Big[ \varphi(X_n^x) 
 \bbm{1}_{ \{ \log |G_n x| - n\lambda  \geq y b_n + \ee k \} }  \Big]  
 \geq  e^{ - \frac{ (b_n')^2 }{n} \big(  \frac{y^2}{2 \sigma^2} + \eta \big) }. 
\end{align*} 
%and 
%\begin{align*}
% \bb{E}^{1/2}  \Big[   \varphi^2(X_n^x) 
%  \bbm{1}_{ \{ \log \|G_n \| - n\lambda  \geq y b_n + a k \} }  \Big]   
% \leq  e^{ - \frac{ (b_n')^2 }{ 2n } \big(  \frac{y^2}{2 \sigma^2} - \epsilon \big) }.    
%\end{align*} 
Substituting this into \eqref{Entry_Inequa_01},  we obtain
\begin{align*}
I_n \geq  e^{ - \frac{ (b_n')^2 }{n} \big(  \frac{y^2}{2 \sigma^2} + \eta \big) }
   \Big[ 1 -  e^{ -C_2 k + \frac{ (b_n')^2 }{n} \big( \frac{y^2}{2 \sigma^2} + \eta \big) } \|\varphi\|_{\infty}  \Big]. 
\end{align*}
In view of \eqref{Pf_Range_k}, 
choosing $C_3 >  \frac{ 1 }{C_2} \big( \frac{y^2}{2 \sigma^2} +  \eta \big)$, 
by elementary calculations, we get
\begin{align*}
\lim_{n \to \infty} \frac{ (b_n')^2 }{k n}  \big( \frac{y^2}{2 \sigma^2} +  \eta \big) 
= \frac{ 1 }{C_3}  \, \big( \frac{y^2}{2 \sigma^2} +  \eta \big) < C_2.
\end{align*}
Thus, for some constant $C_4 >0$, 
\begin{align*}
I_n \geq  e^{ - \frac{ (b_n')^2 }{n} \big(  \frac{y^2}{2 \sigma^2} + \eta \big) }
   \Big[ 1 -  e^{ -C_4 k } \|\varphi\|_{\infty}  \Big]. 
\end{align*}
Hence, recalling that $k = \floor[]{ C_3 \frac{b_n^2}{n} } \to \infty$ as $n \to \infty$, we obtain
\begin{align*}
\liminf_{ n \to \infty } \frac{n}{ b_n^2 }  \log I_n
& \geq   \lim_{ n \to \infty }  \frac{n}{ b_n^2 }  
   \Big[  - \frac{ (b_n')^2 }{n} \Big(  \frac{y^2}{2 \sigma^2} + \eta \Big) \Big]
  + \lim_{ n \to \infty }  \frac{n}{ b_n^2 } \log ( 1 - e^{- C_4 k} \|\varphi\|_{\infty}  )   \nonumber\\
&  =  \lim_{ n \to \infty }  
  \Big[ - \Big(  1 + \frac{C_1 k}{ y b_n } \Big)^2  \Big(  \frac{y^2}{ 2 \sigma^2 } + \eta  \Big)   \Big]
  + 0  \nonumber\\
&  =  - \Big(  \frac{y^2}{ 2 \sigma^2 } + \eta \Big).
\end{align*}
Letting $\eta \to 0$, the desired lower bound follows:
for any $y > 0$, uniformly in $f, x \in \bb S_+^{d-1}$, 
\begin{align*}
\liminf_{n\to \infty}\frac{n}{b_n^{2}}
\log  \bb{E} \Big[   \varphi(X_n^x) 
\bbm{1}_{ \big\{ \frac{\log \langle f, G_n x \rangle - n\lambda }{b_n} \geq y  \big\}  }   \Big] 
\geq - \frac{y^2}{2\sigma^2}.  
\end{align*}
This, together with the upper bound \eqref{MDP_Entry_y_Posi_Upp}, concludes the proof of \eqref{MDP_Entry_y_Posi}. 

\medskip

We next prove \eqref{MDP_Entry_y_Neg}. 
By \eqref{ThmNormTarget02}
and the fact that $\langle f, G_n x \rangle \leq |G_n x|$, the lower bound easily follows:
for any $y > 0$, uniformly in $f, x \in \bb S_+^{d-1}$, 
\begin{align}\label{MDP_Entry_y_Neg_Low}
\liminf_{n\to \infty}\frac{n}{b_n^{2}}
\log  \bb{E} \Big[   \varphi(X_n^x) 
\bbm{1}_{  \big\{ \frac{\log \langle f, G_n x \rangle - n\lambda }{b_n} \leq -y  \big\}  }   \Big] 
\geq - \frac{y^2}{2\sigma^2}.  
\end{align}
For the upper bound, by \eqref{Regu_n_k_01}, 
there exist constants $C_5, C_6 >0$ and $k_0 \in \bb{N}$ such that for all $n \geq k \geq k_0$,
\begin{align*}
J_n: &  = \bb{E}  \Big[   \varphi(X_n^x) 
  \bbm{1}_{ \big\{ \frac{\log \langle f, G_n x \rangle - n\lambda }{b_n} \leq -y  \big\} }   \Big]   \nonumber\\
&  =  \bb{E}  \Big[   \varphi(X_n^x) 
  \bbm{1}_{  \big\{ \frac{\log \langle f, G_n x \rangle - n\lambda }{b_n} \leq -y   \big\} } 
   \bbm{1}_{  \big\{ \log \langle f, G_n x \rangle - \log |G_n x|  \geq  - C_5 k   \big\} }  \Big]  \nonumber\\
&  \quad  +  \bb{E}  \Big[   \varphi(X_n^x) 
  \bbm{1}_{ \big\{ \frac{\log \langle f, G_n x \rangle - n\lambda }{b_n} \leq -y  \big\} } 
   \bbm{1}_{ \big\{ \log \langle f, G_n x \rangle - \log |G_n x|  <  - C_5 k   \big\} }  \Big]    \nonumber\\
& \leq  \bb{E}  \Big[   \varphi(X_n^x) 
   \bbm{1}_{  \big\{  \log |G_n x| - n \lambda  \leq  -y b_n + C_5 k  \big\} }  \Big]
   + e^{-C_6 k} \|\varphi\|_{\infty}. 
\end{align*}
As in the proof of \eqref{MDP_Entry_y_Posi}, we choose
\begin{align}\label{MDP_Norm_k_aaa}
k = \floor[\Big]{ C_7 \frac{b_n^2}{n} },  
\end{align}
where $C_7 >0$ is a constant whose value will be chosen large enough. 
From \eqref{ThmNormTarget02}, 
it follows that for any $\eta >0$, there exists $n_0 \in \bb{N}$ such that for any $n \geq n_0$,
\begin{align}\label{MDP_Norm_aaa01} 
\bb{E}  \Big[   \varphi(X_n^x) 
\bbm{1}_{ \big\{ \frac{\log | G_n x| - n\lambda }{b_n} \leq -y  \big\} }   \Big] 
\leq e^{ - \frac{b_n^2}{n} \big(  \frac{y^2}{2 \sigma^2} - \eta \big) }. 
\end{align}
Denote $b_n' = b_n - \frac{C_5 k}{y}$. Then, by \eqref{MDP_Norm_k_aaa}, it holds that 
$\frac{b_n'}{\sqrt{n}}\to \infty$ and $\frac{b_n'}{n} \to 0$, as $n \to \infty$. 
From \eqref{MDP_Norm_aaa01}, it follows that uniformly in $f, x \in \bb S_+^{d-1}$, 
\begin{align}\label{Pf_MD_Spe_ll}
J_n  \leq  e^{ - \frac{ (b_n')^2 }{ n } \big(  \frac{y^2}{2 \sigma^2} - \eta \big) }
   + e^{-C_6 k} \|\varphi\|_{\infty}.    
\end{align} 
Note that $\frac{b_n'}{b_n} \to 1$ as $n \to \infty$.   
Choosing $C_7 >  \frac{ 1 }{C_6} \big( \frac{y^2}{2 \sigma^2} -  \eta \big)$, we get
\begin{align*}
\lim_{n \to \infty} \frac{ (b_n')^2 }{k n}  \big( \frac{y^2}{2 \sigma^2} -  \eta \big) 
= \frac{ 1 }{C_7}  \, \big( \frac{y^2}{2 \sigma^2} -  \eta \big) < C_6.
\end{align*}
Hence, 
\begin{align*}  
\limsup_{n\to \infty}  \frac{n}{b_n^{2}} \log J_n 
& \leq  \limsup_{n\to \infty}  \frac{n}{b_n^{2}} 
  \log  e^{ - \frac{ (b_n')^2 }{ n } \big(  \frac{y^2}{2 \sigma^2} - \eta \big) }  \nonumber\\
& =  - \lim_{n\to \infty}  
  \Big( \frac{b_n'}{b_n} \Big)^2  \Big( \frac{y^2}{2 \sigma^2} - \eta \Big)   
=   - \Big(  \frac{y^2}{ 2 \sigma^2 } - \eta \Big).  
\end{align*} 
Since $\eta > 0$ can be arbitrary small, 
we obtain the desired upper bound:
$$\limsup_{n\to \infty}  \frac{n}{b_n^{2}} \log J_n \leq - \frac{y^2}{ 2 \sigma^2 }.$$  
Combining this with the lower bound \eqref{MDP_Entry_y_Neg_Low}, we finish the proof of \eqref{MDP_Entry_y_Neg}.

Combining  \eqref{MDP_Entry_y_Posi} and \eqref{MDP_Entry_y_Neg}, we get
the assertion (1). 
Using the assertion (1) and the Collatz-Wielandt formula, one can obtain the assertion (2). 
\end{proof}

%%%%%%%%%%%%%%%%%%%%%%%%%%%%%%%%%%%%%%%%%%%%%%%%%%%%%%%%%%%%%%%%%%%%%%%%%%%%%%%%%%%
%%%%%%%%%%%%%%%%%%%%%%%%%%%%%%%%%%%%%%%%%%%%%%%%%%%%%%%%%%%%%%%%%%%%%%%%%%%%%%%%%%%
\subsection{Proof of Proposition \ref{Prop_Variance}}

We prove Proposition \ref{Prop_Variance} based on Lemmas \ref{Lem_Norm}, \ref{lem equiv Kesten}
and the Collatz-Wielandt formula \eqref{Formu_ColWie}. 

\begin{proof}[Proof of Proposition \ref{Prop_Variance}]
We first prove part (1). For fixed $x \in K \subset (\mathbb{S}_+^{d-1})^{\circ}$, we denote 
\begin{align*}
A_n = \mathbb{E}  \Big[ \big( \log | G_n x | - n \lambda \big)^2 \Big],  \quad
B_n =  \mathbb{E} \Big[ \big( \log \| G_n \| - n \lambda \big)^2 \Big]. 
\end{align*}
Since $\frac{1}{n} A_n \to \sigma^2$ as $n \to \infty$ (see \eqref{Formu_sig}), 
it suffices to show that $\frac{1}{n} (B_n - A_n) \to 0$ as $n \to \infty$. 
Using Minkowski's inequality, we see that there exists a constant $C>0$ independent of $x \in K$ such that
\begin{align*}
\big| \sqrt{B_n} - \sqrt{A_n}  \big|  
\leq  \sqrt{ \mathbb{E} \left[ \Big( \log \frac{\| G_n \|}{ | G_n x | } \Big)^2 \right] }   
  \leq C,  
\end{align*}
where the last inequality holds by Lemma \ref{Lem_Norm}. % and the constant $C$ does not depend on $x$. 
Consequently, it follows that 
\begin{align}\label{Var_Ine001}
| B_n - A_n | \leq | \sqrt{B_n} - \sqrt{A_n} |  \left( | \sqrt{B_n} - \sqrt{A_n} | + 2 \sqrt{A_n} \right)  
\leq C ( C + O(\sqrt{n}) ),
\end{align}
which leads to the desired assertion in part (1). 

Now we proceed to prove part (2). Denote 
\begin{align*}
D_n = \mathbb{E} \left[ (\log \langle f, G_n x \rangle - n \lambda )^{2} \right],  \quad
E_n =  \mathbb{E} \left[ (\log \rho(G_n) - n \lambda )^{2} \right]. 
\end{align*}
As in the proof of part (1), by Minkowski's inequality, 
we have, uniformly in $f, x \in \mathbb{S}_+^{d-1}$, 
\begin{align*}
\left| \sqrt{D_n} - \sqrt{A_n}  \right| 
\leq  \sqrt{ \mathbb{E}  \Big[ \big( \log \langle f, X_n^x  \rangle \big)^2 \Big] } 
  \leq C,  
\end{align*}
where  the last inequality holds by Lemma \ref{lem equiv Kesten}.
In the same way as in the proof of \eqref{Var_Ine001}, 
one can verify that $\frac{1}{n} (D_n - A_n) \to 0$, as $n \to \infty$, uniformly in $f, x \in \mathbb{S}_+^{d-1}$.
This ends the proof of the first equality in part (2). 
To prove the second one in part (2), using again the Minkowski inequality, we have 
\begin{align*}
\left| \sqrt{E_n} - \sqrt{B_n}  \right| 
\leq  \sqrt{ \mathbb{E} \Big[ \Big( \log \frac{ \| G_n \| }{ \rho(G_n) } \Big)^2 \Big] }.   
\end{align*}
Taking into account the Collatz-Wielandt formula \eqref{Formu_ColWie} 
with $i=1$ and $x_0 = (1,1,\ldots, 1)^{\mathrm{T}}$, 
we get that $\rho(G_n) \geq \langle e_1, G_n x_0 \rangle$.  
Since $\rho(G_n) \leq \| G_n \|$ and $\| G_n \| \leq C | G_n x_0|$ (see Lemma \ref{Lem_Norm}),
it follows from Lemma \ref{lem equiv Kesten} that 
\begin{align*}
\left| \sqrt{E_n} - \sqrt{B_n}  \right| 
\leq  C +  \sqrt{ \mathbb{E} \Big[ \big( \log \langle e_1, X_n^{x_0}  \rangle \big)^2 \Big] }
  \leq C.   
\end{align*}
Together with part (1), this proves the second equality in part (2).
\end{proof}

%%%%%%%%%%%%%%%%%%%%%%%%%%%%%%%%%%%%%%%%%%%%%%%%%%%%%%%%%%%%%%%%%%%%%%
%%%%%%%%%%%%%%%%%%%%%%%%%%%%%%%%%%%%%%%%%%%%%%%%%%%%%%%%%%%%%%%%%%%%%%

\end{document}